\date{\today}
\def \al{\alpha}
\def \ga{\gamma}
\def \dl{\delta}
\def \ep{\varepsilon}
\def \la{\lambda}
\def \si{\sigma}
\def \om{\omega}
\def \Si{\Sigma}
\def \Om{\Omega}
\def \operatorname#1{\mathop{\rm #1}}
\def\div{\operatorname{div}}
\def\osc{\operatorname{osc}}
\def\osc2{\operatorname{osc^2}}
\def\dist{\operatorname{dist}}
\def\supp{\operatorname{supp}}
\def\cd{\partial}
\def\esssup{\operatorname{esssup}}
\def\Q0{Q(x_0,t_0,R)}
\def\0{{x_0,t_0,R}}
\def\build#1_#2{\mathrel{\mathop{\kern 0pt#1}\limits_{#2}}}
\newtheorem{theorem}{Theorem}[section]
\newtheorem{proposition}{Proposition}[section]
\newtheorem{lemma}{Lemma}[section]
\newtheorem{definition}{Definition}[section]
\begin{document}

\title{Elliptic   equations with a singular drift  \\ from a weak Morrey space}
\author{Misha Chernobai\footnote{Department of Mathematics, UBC, Canada, mchernobay@gmail.com}, Tim  Shilkin\footnote{V.A.~Steklov Mathematical Institute, St.-Petersburg, Russia,  and Max Planck Institute for Mathematics in the Sciences, Leipzig, Germany,  tim.shilkin@gmail.com}}
\maketitle

 \abstract{In this paper we prove the existence, uniqueness and regularity of weak solutions to the Dirichlet problem for an elliptic equation with a drift $b$ satisfying $\div b\le 0$ in $\Om$. We assume $b$ belongs to some weak Morrey class which  includes in the 3D case, in particular,   drifts having a singularity along the axis $x_3$  with the asymptotic $b(x)\sim c/r$, where $r=\sqrt{x_1^2+x_2^2}$.}

\section{Introduction and Main Results}

\bigskip  Assume  $\Om \subset  \Bbb R^n$ is a bounded Lipschitz domain, $n\ge 3$.     We consider the
following boundary value problem:
\begin{equation}
\left\{ \quad \gathered     -\Delta u +  b \cdot \nabla u \, = \, -\div f \qquad\mbox{in}\quad \Om, \\
u|_{\cd \Om} \ = \ 0. \qquad \quad
\endgathered\right.
\label{Equation}
\end{equation}
Here   $u:  \Om\to \Bbb R$ is unknown,   $b:\Om\to \Bbb R^n$ and
$f: \Om \to \Bbb R^n$ are given functions.
In particular, we are interested in the drift of type
\begin{equation} n=3, \qquad b(x) =  -\al  \, \frac {x'}{|x'|^2}, \qquad x'=(x_1, x_2, 0), \qquad \al\in \Bbb R.
\label{Typical_b}
\end{equation}
Motivated by this example, we assume that the  drift $b$ satisfies the condition
\begin{equation}
b\in L^{2, n-2}_w(\Om),  \label{Assumptions_Morrey_space}
\end{equation}
where
$L^{p,\la}_w(\Om)$ is the weak Morrey  space  equipped
with the quasi-norm
$$
\| b\|_{L^{p,\la}_w(\Om)} \ := \ \sup\limits_{x_0\in
\Om}\sup\limits_{R<\operatorname{diam}\Om} \, R^{-\frac \la p} \,
\|b\|_{L_{p,w}(B_R(x_0)\cap \Om)},
$$
and $L_{p,w}(\Om)$ is the weak Lebesgue space equipped with the quasi-norm
\begin{equation}
\| b\|_{L_{p,w}(\Om)} \ := \ \sup\limits_{s>0}
s~|\{~x \in \Om: ~|b(x )|>s~\}|^{\frac 1p},
\label{Weak_Lebesgue}
\end{equation}
where $|\Om|$ denotes the $n$-dimensional Lebesgue measure of $\Om$.
 Note that the condition \eqref{Assumptions_Morrey_space} implies in particular that $b$ belongs to the usual Morrey space $L^{p, n-p}(\Om)$ for any $p\in [1, 2)$ (see  Proposition \ref{Holder_inequality} below and the definition of Morrey spaces at the end of this section).
  Note also that  both scales  of  spaces  $L^{p,n-p}(\Om)$ and  $L^{p,n- p}_w (\Om)$ are  critical ones,  i.e. these spaces  are  invariant under the   scaling
\begin{equation}\label{Scaling}
b^{ \la }(x) = \la  \, b( \la  x),  \quad  \la >0 \quad \Longrightarrow \quad  \quad \| b^{  \la } \|_{L^{p,n-p}_w ( \Om_\la  ) } =\| b  \|_{L^{p,n-p}_w (\Om )},
\end{equation}
 where we assume $\Om$ is star-shaped and   denote $\Om_{ \la}:=\{ \, x\in \Bbb R^n:\,  x/ \la  \in \Om\, \}$.
Concerning further properties of   weak Morrey spaces we refer to \cite{Di_Fazio_2020}, \cite{SFH}. Here we emphasise only that the scale of weak Morrey spaces $L_w^{p, n-p}(\Om)$  is convenient for the description  of drifts $b$   which have a certain asymptotic near   some singular submanifold $\Si\subset \Om$. For example, for $n=3$ we have
$$
\begin{array}{ccl}
b(x) \sim \frac1{x_3},& \Si \ = \mbox{ a plane } & \quad \Longrightarrow \quad  b\in L^{1,2}_w(\Om),  \phantom{\Big|}\\
b(x) \sim \frac1{\sqrt{x_1^2+x_2^2}},& \Si \  = \mbox{ a line } &  \quad \Longrightarrow \quad b\in L^{2,1}_w(\Om), \phantom{\Big|}\\
b(x) \sim \frac1{\sqrt{x_1^2+x_2^2+x_3^2}},& \Si \ = \mbox{ a point } & \quad \Longrightarrow \quad  b\in L^{3,0}_w(\Om)=L_{3,w}(\Om). \phantom{\Big|}\\
\end{array}
$$

We define the bilinear form $\mathcal B[u,\eta]$ by
\begin{equation}
\mathcal B[u,\eta] \ := \ \int\limits_{\Om}  \eta \,  b   \cdot \nabla u  ~dx. \label{Bilinear_Form}
\end{equation}
Note that for
$b $   satisfying \eqref{Assumptions_Morrey_space} the bilinear form   \eqref{Bilinear_Form}
generally speaking is not well-defined  for $u\in W^{1 }_2(\Om)$ and $\eta\in C_0^\infty(\Om)$ where
$$
W^1_p(\Om) \, := \, \{\, u\in L_p(\Om): \, \nabla u \in L_p(\Om)\, \}, \qquad  p\in [1, +\infty],
$$
is the usual Sobolev space and $C_0^\infty(\Om)$ is the linear space of $C^\infty$--smooth functions compactly supported in $\Om$.

Nevertheless, the bilinear form $\mathcal B[u,\eta]$ is
well-defined at least   for $u\in W^{1 }_p(\Om)$ with $p>2$ and
$\eta\in L_q(\Om)$ with $q>\frac {2p}{p-2}$.  So,
instead of the standard notion of weak solutions from the energy
class $W^{1 }_2(\Om)$   we introduce the definition of $p$-weak
solutions to the problem \eqref{Equation}, see also \cite{Kang_Kim}, \cite{Tsai},  \cite{Kwon_1}:

\begin{definition}
Assume  $p> 2$, $b\in L_{p'}(\Om)$, $p'=\frac{p}{p-1}$, and  $f\in L_1(\Om)$.
 We say $u$ is a $p$-weak solution to the problem
\eqref{Equation} if $ u\in   \overset{\circ}{W}{^1_p}(\Om) $
and $u$ satisfies the identity
\begin{equation}
\gathered  \int\limits_{\Om}   \nabla u \cdot
\nabla\eta \, dx  \ + \ \mathcal B[u,\eta] \ = \
\int\limits_{\Om} f\cdot \nabla \eta~dx   , \qquad  \forall~\eta\in C_0^\infty(\Om).
\endgathered
\label{Identity}
\end{equation}
If  $b\in L_2(\Om)$  and  $u\in   \overset{\circ}{W}{^1_2}(\Om)$ satisfy  \eqref{Identity} then we call $u$ a weak solution to the problem  \eqref{Equation}. Obviously,  in this case  $p$-weak solutions are some subclass of weak solutions.

\end{definition}
Note that if $u$ is a weak or a $p$-weak solution to \eqref{Equation} and $f\in L_2(\Omega)$ then by density arguments we can extend the class of test functions in \eqref{Identity} from $\eta\in C_0^\infty(\Om)$ to all functions $\eta\in\overset{\circ}{W}{^1_2}(\Om)\cap L_{\infty}(\Omega)$.

In this paper we   always assume that the vector field $b$  has a sign-defined divergence:
\begin{equation}
 \div b  \, \le  \, 0 \quad \mbox{in} \quad
\mathcal D'(\Om), \label{Assumptions_b}
\end{equation}
where $\mathcal D'(\Om)$ is the space of distributions on $\Om$.
In the case \eqref{Typical_b} the condition \eqref{Assumptions_b} corresponds to $\al\ge 0$.
The important impact of the condition  \eqref{Assumptions_b} is due to the fact that in this case  the quadratic form $\mathcal B[u,u]$ provides   a positive support to the quadratic form of the elliptic operator in \eqref{Equation} (see Proposition \ref{Q_Form_good_sign} below), while in the opposite case $\div b\ge 0$ in $\Om$   the quadratic form $\mathcal B[u,u]$  is (formally) non-positive and hence it ``shifts'' the operator to the ``spectral area''. It is well-known that in the case of a non-regular drift $b$ the violation of the condition \eqref{Assumptions_b} potentially can  ruin   the uniqueness for  the problem \eqref{Equation}   even in the class of   smooth solutions. For example, in the case $\Om=B$ where $B:=\{\, x\in \Bbb R^n:\, |x|<1\,\}$ the functions
\begin{equation}\label{Counterexample}
u(x) = c(|x|^2 -1) ,   \quad b(x) =n \, \tfrac{x}{|x|^2}, \quad  b\in L_{n,w}(B), \quad \div b \ge 0  \ \   \mbox{ in } \ \ \mathcal D'(B)
\end{equation}
satisfy \eqref{Equation} with $f\equiv 0$.  Note also that in the regular case   $b\in L_n(\Om)$ the uniqueness for the problem \eqref{Equation}  holds  regardless of the sign of  $\div b$ due to the maximum principle, see, for example, \cite{Filonov_Shilkin} and reference there.

\medskip
The main results of the present paper are  the following two
theorems:

\begin{theorem}\label{Theorem_1} Assume   $n\ge 3$,  $\Om\subset \Bbb R^n$ is a bounded Lipschitz domain and $b$   satisfies
\eqref{Assumptions_Morrey_space}, \eqref{Assumptions_b}. Then there exist  $p>2$
depending only on  $n$, $\Om$  and  $\| b\|_{L^{2, n-2}_{w}(\Om)}$  such
that  for any $f\in L_p(\Om)$  there exists a unique $p$-weak
solution $u\in W^1_p(\Om)$ to
  the problem \eqref{Equation}. Moreover, this solution satisfies the estimate
\begin{equation}
\| u\|_{W^1_p(\Om)} \ \le \ c~\| f\|_{L_p(\Om)},
\label{Main_Estimate}
\end{equation}
 with a
constant $c>0$ depending only on     $n$, $\|
b\|_{L^{2, n-2}_{w}(\Om)} $ and the Lipschitz constant of $\cd \Om$.
\end{theorem}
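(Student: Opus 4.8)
The strategy is a two-step argument: first establish an *a priori* estimate in $W^1_p$ for some $p>2$ depending only on $n$, $\Om$ and $\|b\|_{L^{2,n-2}_w(\Om)}$, then combine it with a continuation/fixed-point argument to get existence and uniqueness. The key structural input is that $\div b\le 0$, which by the cited Proposition on the sign of the quadratic form gives $\mathcal B[u,u]\ge 0$ for $u\in\WWW{^1_2}(\Om)\cap L_\infty(\Om)$ (after the density extension of test functions). I would begin with the energy estimate: formally testing \eqref{Identity} with $\eta=u$ gives $\|\nabla u\|_{L_2}^2 \le \int f\cdot\nabla u - \mathcal B[u,u]\le \|f\|_{L_2}\|\nabla u\|_{L_2}$, hence $\|u\|_{\WWW{^1_2}}\le c\|f\|_{L_2}$. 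This step requires that $u\in L_\infty$ or a truncation argument to justify $\mathcal B[u,u]\ge 0$; I would run it on truncations $u_k=\max(-k,\min(k,u))$ and pass to the limit, or first prove boundedness of $u$ via De Giorgi/Moser iteration using that $b$ lies in the subcritical Morrey space $L^{p_0,n-p_0}$ for $p_0<2$ (Proposition \ref{Holder_inequality}), which controls the lower-order term in the Caccioppoli inequalities.

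The heart of the matter is the *higher integrability* of $\nabla u$, i.e. passing from $W^1_2$ to $W^1_p$ with $p>2$. Here I would use a perturbative Gehring-type (reverse Hölder) argument on local Caccioppoli inequalities, treating $b\cdot\nabla u$ as a right-hand side. On a ball $B_{2\rho}$, test with $\eta=\zeta^2(u-\bar u)$; the drift term produces $\int \zeta^2 b\cdot\nabla u\,(u-\bar u)$, which after Hölder in the weak-Lebesgue/Morrey scale and Sobolev embedding is bounded by $\|b\|_{L^{2,n-2}_w}$ times a quantity with a *small* power of $\rho$ to spare (this is exactly where criticality is borderline and why the smallness of $\|b\|$ on small balls — or rather its local Morrey norm on $B_\rho$, which tends to $0$ as $\rho\to 0$ by absolute continuity — is exploited). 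This yields a reverse Hölder inequality $\pint_{B_\rho}|\nabla u|^2 \le C\big(\pint_{B_{2\rho}}|\nabla u|^{2_*}\big)^{2/2_*} + C\pint_{B_{2\rho}}|f|^2$ with $2_*=\frac{2n}{n+2}<2$; Gehring's lemma then upgrades the exponent to some $p=p(n,\Om,\|b\|)>2$, and careful bookkeeping near the boundary (using the Lipschitz character of $\cd\Om$ and zero boundary values) gives the global estimate \eqref{Main_Estimate}.

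Given the a priori estimate, existence follows by approximation: replace $b$ by bounded smooth $b^{(m)}$ with $\div b^{(m)}\le 0$ (e.g. mollify and correct, or truncate $|b|\le m$ along level sets so the divergence sign is preserved) and $f$ by $f^{(m)}\in C^\infty$; for each $m$ the classical theory gives a unique weak solution $u^{(m)}\in\WWW{^1_2}(\Om)$, which by the (uniform) a priori estimate lies in $\WWW{^1_p}(\Om)$ with a bound independent of $m$. Extract a weakly convergent subsequence in $W^1_p$; the linear terms pass to the limit trivially, and the drift term $\int\eta\, b^{(m)}\cdot\nabla u^{(m)}$ converges because $b^{(m)}\to b$ in $L_{p'}$ (which holds since $b\in L^{2,n-2}_w\subset L^{p_0,n-p_0}\subset L_{p_0}$ for $p_0<2$, and one can arrange $p'<p_0$ by taking $p$ close enough to $2$) while $\nabla u^{(m)}\rightharpoonup\nabla u$ in $L_p$ and $\eta$ is bounded. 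Uniqueness is immediate from linearity and the a priori estimate with $f=0$.

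**Main obstacle.** The delicate point is the reverse Hölder step: the space $L^{2,n-2}_w$ is *critical* for this equation, so the drift term does not come with any decay in $\rho$ for free — one must extract smallness either from the absolute continuity of the Morrey norm on small balls (which fails for the weak-Morrey quasinorm in general!) or, more robustly, by interpolating down to the strictly subcritical scale $L^{p_0,n-p_0}$ with $p_0<2$ via Proposition \ref{Holder_inequality}, where genuine $\rho$-decay is available. Making the weak-type truncation argument for $b^{(m)}$ compatible with $\div b^{(m)}\le 0$, and ensuring the final exponent $p$ depends on $\|b\|_{L^{2,n-2}_w}$ in a quantitative way, is where the technical care concentrates.
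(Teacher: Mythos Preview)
Your overall strategy matches the paper's: energy estimate via the sign condition, local Caccioppoli plus reverse H\"older plus Gehring for higher integrability, then approximation of $b$ and $f$ for existence, with uniqueness coming from the boundedness result (Proposition~\ref{Theorem_2}). Two points, however, need correction.

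In the local Caccioppoli step you propose estimating the full drift term $\int \zeta^2 (u-\bar u)\, b\cdot\nabla u$ and extracting either a small power of $\rho$ or smallness of the local Morrey norm. Neither is available: the scales $L^{2,n-2}_w$ and $L^{r,n-r}$ are invariant, so no $\rho$-decay appears, and your attempt to absorb the term would not close. What the paper does instead is test with $\eta=\zeta^4\bar u$ and use the identity $\mathcal B[u,\zeta^4\bar u]=\mathcal B[\zeta^2\bar u,\zeta^2\bar u]-2\int \zeta^3|\bar u|^2\,b\cdot\nabla\zeta$; the first piece is $\ge 0$ by \eqref{Quadratic_Form_Non-negative}, so the sign condition is exploited \emph{locally}, not only in the global energy estimate, and only the cross term survives. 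That cross term is handled by Chiarenza--Frasca (Proposition~\ref{Morrey_usual}) applied in $L^{r,n-r}(\Om)$ for some fixed $r\in(\tfrac{2n}{n+2},2)$, yielding a bound of the form $c\,\|b\|_{L^{r,n-r}}\|\nabla(\zeta^2\bar u)\|_{L_2}\|\nabla(\zeta\bar u)\|_{L_{2n/(n+2)}}$; after Young the $L_2$ factor is absorbed and the residual carries the subcritical gradient exponent $\tfrac{2n}{n+2}$, which is exactly what Gehring needs. So the role of Proposition~\ref{Holder_inequality} is not to manufacture decay but to land in a strong Morrey space where Chiarenza--Frasca delivers the lower-exponent gradient norm.

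On the approximation: truncating $|b|$ on level sets does \emph{not} preserve $\div b\le 0$, and ``mollify and correct'' is not a free move here. The paper mollifies $b$ and solves on an exhausting sequence of $C^2$ subdomains $\Om_k\Subset\Om$ with uniformly controlled Lipschitz constants, chosen so that $\ep_k<\operatorname{dist}(\bar\Om_k,\partial\Om)$; then $\div b_k\le 0$ holds in $\Om_k$ automatically, and the appendix lemma gives $\|b_k\|_{L^{2,n-2}_w(\Om_k)}\le c\,\|b\|_{L^{2,n-2}_w(\Om)}$ uniformly in $k$, which is what makes the a priori constant in \eqref{Main_Estimate} independent of $k$.
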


\begin{theorem}\label{Theorem_3}   Assume   $n\ge 3$,  $\Om\subset \Bbb R^n$ is a bounded Lipschitz domain and  $b$   satisfies
 \eqref{Assumptions_Morrey_space},  \eqref{Assumptions_b} and  $p>2$.  Then for any
  $q>n$ there    exists    $\mu\in (0,1)$  depending only on  $n$, $p$, $q$, $\Om$  and  $\| b\|_{L^{2, n-2}_w(\Om)}$
such that
if $u$ is a $p$--weak solution to the problem \eqref{Equation} corresponding to the right-hand side $f\in L_q(\Om)$  then    $u$ is H\" older continuous on $\bar \Om$ with the exponent $\mu$  and the  estimate
\begin{equation}
  \|u\|_{C^\mu(\bar \Om)} \ \le \ c~\| f\|_{L_q(\Om)},
\label{Holder_Estimate}
\end{equation}
 holds  with the
constant $c>0$ depending only on  $q$, $p$, $n$,   $\|
b\|_{L^{2, n-2}_{w}(\Om)} $ and the Lipschitz constant of $\cd \Om$.
\end{theorem}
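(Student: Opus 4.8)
The plan is to prove H\"older continuity by a De~Giorgi--Nash--Moser-type iteration adapted to the singular drift, exploiting the good sign \eqref{Assumptions_b}. First I would establish a local energy (Caccioppoli) inequality: testing the identity \eqref{Identity} with $\eta = \zeta^2 (u-k)_+$ for a cutoff $\zeta$ supported in a ball $B_R(x_0)$ and a level $k$, the drift term $\mathcal B[u,\eta]$ is split using $\div b\le 0$. Concretely, $\int \zeta^2 (u-k)_+ b\cdot\nabla u\,dx = \int \zeta^2 b\cdot\nabla \frac{(u-k)_+^2}{2}\,dx$, and integrating by parts this equals $-\int \frac{(u-k)_+^2}{2}\div(\zeta^2 b)\,dx = -\int \frac{(u-k)_+^2}{2}\zeta^2\,d(\div b) - \int (u-k)_+^2 \zeta b\cdot\nabla\zeta\,dx$; the first term has the favourable sign and is dropped, and the remaining term $\int (u-k)_+^2 \zeta b\cdot\nabla\zeta\,dx$ must be absorbed. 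This is the crucial estimate and the main obstacle: $b$ is only in the weak Morrey space $L^{2,n-2}_w$, so one cannot use $L^2$ bounds directly. Instead I would invoke Proposition~\ref{Holder_inequality} to place $b$ in the genuine Morrey space $L^{p_0,n-p_0}(\Om)$ for some $p_0\in(1,2)$, and then control $\int_{B_R} (u-k)_+^2 |\nabla\zeta||b|\,dx$ via H\"older's inequality together with the Sobolev--Poincar\'e embedding on the level set $A(k,R) = \{x\in B_R : u(x)>k\}$, gaining a small power of $|A(k,R)|$ that provides the excess decay.

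The second main step is to run the iteration in two regimes. Near the interior, the resulting Caccioppoli inequality has the standard form
\begin{equation*}
\int_{B_{R/2}} |\nabla (u-k)_+|^2\,dx \ \le \ \frac{c}{R^2}\int_{B_R}(u-k)_+^2\,dx \ + \ c\,\|f\|_{L_q}^2\, |A(k,R)|^{1-\frac2q + \gamma} \ + \ c\,\|b\|^2_{L^{p_0,n-p_0}}\,|A(k,R)|^{\delta}\sup_{B_R}(u-k)_+^2,
\end{equation*}
where $\gamma,\delta>0$. Because the Morrey norm enters multiplied by a positive power of the measure of the level set, a standard argument (choosing $R$ small, or exploiting absolute continuity of the Morrey quasi-norm on small balls) makes this last term subcritical, so $u$ belongs to the De~Giorgi class $DG^+_2(\Om)$ locally; the analogous computation with $(u-k)_-$ gives $DG^-_2(\Om)$. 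Membership in the De~Giorgi classes yields interior local boundedness and interior H\"older continuity with an exponent depending only on the structural data, together with the quantitative bound controlling oscillation.

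For the boundary regularity up to $\partial\Om$, I would use the zero Dirichlet condition: the function $(u-k)_+$ with $k\ge 0$ still vanishes on $\partial\Om$, so extending by zero and using that $\Om$ is Lipschitz (hence satisfies a measure-density/corkscrew condition at every boundary point), the same Caccioppoli inequality plus the Sobolev--Poincar\'e inequality for functions vanishing on a portion of the ball gives the boundary De~Giorgi-class estimates; one combines this with the global bound $\|u\|_{W^1_p(\Om)}\le c\|f\|_{L_p}$ from Theorem~\ref{Theorem_1} (valid since $q>n>p$ and $\Om$ bounded, so $f\in L_q\subset L_p$) to reduce $\|u\|_{L^\infty}$ and thus turn the oscillation decay into the global estimate \eqref{Holder_Estimate}. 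I expect the genuinely delicate point to be the bookkeeping that shows the weak-Morrey drift contributes only a small, absorbable perturbation at every scale --- i.e. verifying that the quantity $\sup_{x_0, R\le R_0} R^{-\lambda/p_0}\|b\|_{L_{p_0}(B_R(x_0))}$ can be made as small as one wishes (or is dominated by a power of the level-set measure) so that the De~Giorgi iteration closes with constants independent of the base point; everything else is a routine, if lengthy, adaptation of the classical scheme, with $p>2$ only needed to make sense of $\mathcal B[u,\eta]$ and to start the iteration from a function in $W^1_p$.
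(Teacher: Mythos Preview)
Your overall architecture matches the paper's: test with a cutoff times $(u-k)_+$, use $\div b\le 0$ to drop the diagonal part of the drift, control the cross term $\int \zeta |b|\,|\tilde u|^2|\nabla\zeta|\,dx$, and feed the resulting Caccioppoli inequality into a De~Giorgi iteration (interior and boundary). The genuine gap is precisely at the point you yourself flag as ``genuinely delicate'': neither of your two proposed mechanisms for absorbing the drift cross term actually works. The Morrey space $L^{p_0,n-p_0}$ is \emph{scale-invariant}, so $\sup_{x_0,R\le R_0} R^{-(n-p_0)/p_0}\|b\|_{L_{p_0}(B_R(x_0))}$ equals the full Morrey norm and cannot be made small by shrinking $R$; there is no absolute continuity on small balls for, say, $b(x)=c x'/|x'|^2$. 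And while you can indeed produce a factor $|A(k,R)|^{(p_0-1)/p_0}$ by H\"older, it comes coupled with $R^{(n-p_0)/p_0}\sup_{B_R}(u-k)_+^2$; a scaling check (take $|A(k,R)|\sim R^n$) shows this term is exactly critical, not subcritical, so no smallness is gained and the resulting inequality is \emph{not} the standard De~Giorgi class. Moreover, the presence of $\sup_{B_R}(u-k)_+^2$ makes the boundedness step of De~Giorgi circular; the paper first obtains $u\in L_\infty(\Om)$ from the global maximum-principle argument of Proposition~\ref{Theorem_2}, a step you omit.

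What the paper actually does for the cross term is different: it invokes the Fefferman--Chiarenza--Frasca inequality (Proposition~\ref{Morrey_usual}) through the interpolation estimate \eqref{Morrey_Compactness_estimate} of Proposition~\ref{Morrey_Compactness}, which yields
\[
\int_\Om \zeta^{1+\theta}|b|\,|\tilde u|^2\,dx \ \le \ c\,\|b\|_{L^{r,n-r}}\,\|\tilde u\|_{L_2}^{1-\theta}\,\|\nabla(\zeta\tilde u)\|_{L_2}^{1+\theta}\,R^{\theta}
\]
for suitable $r\in\big(\tfrac{2n}{n+2},2\big)$ and $\theta\in(0,1)$. Because the gradient appears with exponent $1+\theta<2$, Young's inequality absorbs it into the left-hand side; the price is an extra factor $\big(R/(R-\rho)\big)^{2\theta/(1-\theta)}$ multiplying the standard Caccioppoli term $\frac{1}{(R-\rho)^2}\|\tilde u\|_{L_2(B_R)}^2$. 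This is a \emph{modified} De~Giorgi class (Definition~\ref{Def_DG}), and Section~\ref{DG_section} of the paper verifies that this modified class still yields local boundedness and H\"older continuity. To make the test function compatible with \eqref{Morrey_Compactness_estimate} the paper also tests with $\eta=\zeta^{2m}(u-k)_+$, $m=\tfrac{1}{1-\theta}$, rather than $\zeta^2(u-k)_+$. These are the missing ingredients in your sketch.
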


The study of the weak solvability and properties of weak solutions to drift-diffusion equations has a long history. Below we focus on the brief overview of   results concerning the existence of weak solutions, their uniqueness,   boundedness and H\" older continuity.

Very often the study of the problem \eqref{Equation} is coupled with the study of the dual problem
  \begin{equation}
\left\{ \quad \gathered     -\Delta v -  \div (b v)  \, = \,   \div g \qquad\mbox{in}\quad \Om, \\
v|_{\cd \Om} \ = \ 0. \qquad \quad
\endgathered\right.
\label{Equation_dual}
\end{equation}
In the case of the divergence--free drift
\begin{equation} \label{Div-free_drift}
\div b=0 \quad \mbox{in} \quad \mathcal D'(\Om)
\end{equation}
problems \eqref{Equation} and \eqref{Equation_dual} are equivalent, but for the general drifts it is not the case. The reason is that    the dual problem \eqref{Equation_dual} is presented in the divergent form. Hence   weak solutions, depending on the regularity of $b$,  in principle are well-defined in a wider class than the energy space $W^1_2(\Om)$ (one can consider classes of  very weak solutions, renormalized solutions etc.)   For example, in a series of papers \cite{B_2009}, \cite{B_2015}, \cite{BO} (see also \cite{BDGC} and reference there) the existence of weak (distributional) solutions  for the problem \eqref{Equation_dual}  in various functional classes under various critical   and supercritical assumptions on $b$ was studied.

Another approach to the solvability of the problem \eqref{Equation} with the rough coefficients was developed in a series of papers \cite{Krylov_0}, \cite{Krylov_1}, \cite{Krylov_2}, \cite{Krylov_3} (see also  references there). This approach is applicable to   elliptic operators in the non-divergent form  (with  the main part $-a_{jk}(x)\frac{\cd^2 u}{\cd x_j\cd x_k}$ with a non-smooth  uniformly elliptic matrix $a(x)=(a_{jk}(x))$  instead of the Laplacian) and the drift $b$ belonging to the critical Morrey spaces. In this approach the existence of solutions in   Sobolev spaces $W^2_p(\Om)$ with some $p>1$ is investigated and the equation \eqref{Equation} is understood  a.e. in $\Om$.

For the problem \eqref{Equation} weak solutions from $W^1_2(\Om)$ formally are well-defined only for   $b\in L_2(\Om)$ which does not cover the case  \eqref{Assumptions_Morrey_space}.  Nevertheless, weak solutions to elliptic and parabolic equations with rough coefficients are commonly known to possess the {\it higher integrability} property. This means that the gradient of a weak solution is integrable with some exponent which is slightly greater than 2. One can ask  what are the optimal conditions on the drift $b$ which provide  the higher integrability of weak solutions to the problem \eqref{Equation}? The first results in this direction were obtained earlier in \cite{Kim_Kim}, \cite{Tsai}, \cite{Kwon_1} where the critical drift $b\in L_{n,w}(\Om)$ satisfying \eqref{Assumptions_b} was studied. So, our Theorem \ref{Theorem_1} can be viewed as a far away extension of these results. Note also that the 2D case was investigated separately in \cite{Chernobai_Shilkin} and \cite{Kwon_2}.

All results concerning weak solvability and properties of weak solutions to the problems \eqref{Equation} and \eqref{Equation_dual} formally can be split onto three groups: the results related to the subcrtitical drifts, the results for critical drifts and supercritical results. We remind that we call some functional class $X$ supercritical if for $b\in X$ and $b^\la$ defined in \eqref{Scaling} the norm $\| b^\la \|_{X(B)}$ computed over a unit ball $B$ blows up as $\la\to 0$.

The results concerning the drift $b$ belonging to subcritical Lesbegue spaces are classical one, see \cite{LU}. The same is true for   drifts belonging to the critical Lebesgue space
\begin{equation}\label{Regular_drift}
b\in L_n(\Om).
\end{equation}
We    call drifts   satisfying \eqref{Regular_drift}   {\it regular}. In the regular case  the drift term provides a compact perturbation of the Laplace operator and hence the problem \eqref{Equation} possesses the Fredholm property (i.e. the existence for \eqref{Equation} follows from the uniqueness). Moreover, in the   case \eqref{Regular_drift}  weak solutions  are H\" older continuous  and  the maximum principle holds (which provides the uniqueness of weak solutions), see the survey of the results, for example, in \cite{Filonov_Shilkin}. So, we can consider the regular case as a starting point of our investigation. Note   that the necessary and sufficient conditions for the boundedness of the bilinear form \eqref{Bilinear_Form} on the Sololev space $W^1_2(\Bbb R^n)$ were investigated in \cite{MV}.

The counterexample \eqref{Counterexample} shows that even the minimal relaxation of the condition \eqref{Regular_drift} (still within the critical scale) can ruin the maximum principle and   the uniqueness for the problem \eqref{Equation}. So, to recover the uniqueness we need to impose some additional assumptions  on $b$. For example, one can consider the condition \eqref{Assumptions_b} or its particular case \eqref{Div-free_drift}. (Motivated by the properties of the quadratic form $\mathcal B[u,u]$  we  call the  condition \eqref{Assumptions_b}  ``non-spectral''.) In the present paper we focus only on the critical ``non-spectral'' case and we are going to investigate the critical ``spectral'' case $\div b\ge 0$ in a separate forthcoming paper, see  \cite{Chernobai_Shilkin} and \cite{Kwon_2}  for the related 2D results.

For $b\in L_2(\Om)$ satisfying \eqref{Div-free_drift} the uniqueness of weak solutions in the energy class $ W ^1_2 (\Om)$ can be found in \cite{Zhikov} (see also \cite{Zhang} for the parabolic case). In \cite{Zhikov} a counterexample to the uniqueness for the problem \eqref{Equation} with a divergence-free drift was also constructed if the condition $b\in L_2(\Om)$ is violated (see \cite{Tomek} and references there for the further developments in this direction).
For $b\in L_2(\Om)$ satisfying  the ``non-spectral'' condition \eqref{Assumptions_b} the corresponding uniqueness result in the energy class $W^1_2(\Om)$ can be easily obtained by a straightforward adaptation of the methods of \cite{Zhang}, \cite{Zhikov}. For $b\in L_{2,w}(\Om)$ satisfying \eqref{Assumptions_b} our Proposition \ref{Theorem_2} below provides a   uniqueness result for the problem \eqref{Equation} in a class of weak solutions which possess the higher integrability property.

The next issue is the boundedness of weak solutions. It is well-known that even in the divergence--free case \eqref{Div-free_drift} the {\it local} boundedness of weak solutions requires some extra regularity of the drift, see \cite{NU}, \cite{Filonov_Shilkin}, \cite{Filonov_Hodunov}, \cite{AD}, see also a recent survey \cite{AN}  and reference there. Nevertheless, our Proposition \ref{Theorem_2} shows that for a drift $b\in L_{2,w}(\Om)$ satisfying \eqref{Assumptions_b}  $p$--weak solutions to the boundary value problem \eqref{Equation} are always bounded without any extra assumptions on the regularity of $b$. We emphasize that our result is global (i.e. it requires   smooth boundary conditions) and can not be localized.  The phenomena of the dependence of local boundedness  of weak solutions on their behaviour on the boundary of the domain is discussed in \cite{Filonov_Shilkin}.

The last issue we are going to discuss is H\" older continuity of weak solutions. Note that under the critical condition \eqref{Regular_drift} H\" older continuity is the  optimal regularity.
For the problems \eqref{Equation}, \eqref{Equation_dual} with a drift $b$ belonging to the subcritical Morrey space $b\in L^{2, \la}(\Om)$ with
 $\la\in (n-2,n)$   H\" older continuity of weak solutions is known for a long time, see \cite{Di_Fazio_1993} and \cite{Ragusa}. On the other hand, the counterexample in \cite{Filonov} shows that even in the divergence--free case \eqref{Div-free_drift} the smallest violation  of the critical scale can lead to the loss of continuity by weak solutions. So, in this paper we focus on results concerning drifts belonging to critical spaces.

 For the divergence-free drifts from  critical  spaces   the   best known for today results  were obtained in \cite{Fri_Vicol}, \cite{SSSZ} (see also \cite{Vicol}) where   H\" older continuity of weak solutions was proved  for the heat equation with a drift term satisfying \eqref{Div-free_drift} (see also the related result in \cite{Lis_Zhang}). In its elliptic version, these results assume that the drift  $b$ satisfying \eqref{Div-free_drift} belongs to the space  $BMO^{-1}(\Om)$, i.e. there exists a  skew-symmetric matrix  $A\in BMO(\Om)$ such that $b=\operatorname{div } A$ (see the definition of $BMO$ space at the end of this section).  If we assume $\nabla A\in L^{1,n-1}(\Om)$ (which  is in a sense   the optimal regularity for the  critical Morrey scale $L^{p,n-p}(\Om)$) then from Trudinger's imbedding \cite{Trudinger} we obtain $A\in BMO(\Om)$   and hence  H\" older continuity of weak solutions follows from  \cite{Fri_Vicol}, \cite{SSSZ}.

For the non--divergence free critical drifts satisfying the condition \eqref{Assumptions_b}   local a priori estimates of  the H\" older norms  for Lipschitz continuous solutions to the problem \eqref{Equation} were obtained earlier in \cite{NU} under the additional restriction
$b\in L^{r, n-r}(\Om)$ with $r\in (\tfrac n2,n]$. So, in our Theorem \ref{Theorem_3} we remove this   restriction. Note also that our  present  contribution  can be viewed as a multidimensional analogue of the results obtained earlier in \cite{Chernobai_Shilkin} in the 2D case.

 \medskip
Here are some comments and possible extensions:
\begin{itemize}

\item With minor changes in the  proofs our technique allows us to obtain results similar to our Theorems \ref{Theorem_1}--\ref{Theorem_3} if we replace the Laplace operator  in \eqref{Equation} by an elliptic operator in the divergence form $-\div(a(x)\nabla u)$ with a  uniformly elliptic matrix $a(x)=(a_{jk}(x))$, $a_{jk}\in L_\infty(\Om)$. We consider this  extension to be obvious.
\item In the case of the Laplace operator in \eqref{Equation} (or, more generally, in the case of the operator $-\div (a(x)\nabla u)$ with a sufficiently smooth matrix $a(x)$) our $p$-weak solutions in Theorem \ref{Theorem_1} possesses locally integrable second derivatives $u\in W^2_{s, loc}(\Om)$ with some $s>1$ (if $f$ is sufficiently regular). Hence for $p$-weak solutions the equation \eqref{Equation} is valid a.e. in $\Om$. See the related results in  \cite{Krylov_0},   \cite{Krylov_1}, \cite{Krylov_2}, \cite{Krylov_3} for the case of   elliptic operators   in the non-divergent form with non-smooth coefficients.

\item In our investigation of H\" older continuity of weak solutions in  Theorem \ref{Theorem_3} we focus on the ``optimal''  conditions for $b$ but not for $f$. It is well-known (see, for example, \cite[Theorem 5.17]{Giaquinta_ETH}) that     H\" older continuity of weak solutions to the problem \eqref{Equation} is valid if the right hand side $f$  belongs to some appropriate Morrey space. So, we believe that   the condition $f\in L_q(\Om)$ with $q>n$ in Theorem \ref{Theorem_3} can be relaxed, see related results, for example,  in \cite{Di_Fazio_1993}, \cite{Di_Fazio_2020}.
\end{itemize}

Our paper is organized as follows. In Section \ref{Auxiliary results} we introduce some auxiliary results which are basic tools of our paper. In Section
\ref{Proof_T1} we obtain  the higher integrability of weak solutions to the problem \eqref{Equation} and prove  Theorems \ref{Theorem_1}.
In Section \ref{DG_section} we introduce some De Giorgi--type classes which are convenient for the study of the equations with the coefficients from Morrey spaces. These De Giorgi classes are very similar to the classical ones (see \cite{Giaquinta_ETH}, \cite{Han_Lin},  \cite{LU}) and readers familiar with the De Giorgi technique can omit this section.  In Section \ref{Proof_T3} we show that $p$--weak solutions to the problem \eqref{Equation} belong to the above De Giorgi classes and hence they are H\" older continuous. So, in Section \ref{Proof_T3} we prove Theorem \ref{Theorem_3}. Finally, in Appendix we prove some property of the mollification operator on the class of weak Morrey spaces.

\medskip

In the paper we use the following notation. For any $a$, $b\in
\mathbb  R^n$ we denote by $a\cdot b = a_k b_k$ their  scalar product in
$\mathbb R^n$. Repeated indexes assume the summation from 1 to $n$. An index after comma means partial derivative with respect to $x_k$, i.e. $f_{,k}:=\frac{\cd f}{\cd x_k}$.  We denote by $L_p(\Omega)$ and $W^k_p(\Omega)$ the
usual Lebesgue and Sobolev spaces. We do not distinguish between functional spaces of scalar and vector functions and omit the target space in notation.  $C_0^\infty(\Om)$ is the space of
smooth functions compactly supported in $\Om$. The space
$\overset{\circ}{W}{^1_p}(\Omega)$ is the closure of
$C_0^\infty(\Omega)$ in $W^1_p(\Omega)$ norm. For $p>1$ we denote $p'=\frac p{p-1}$.  The space of
distributions on $\Om$ is denoted by $\mathcal D'(\Om)$. By  $C^\mu(\bar \Omega)$, $\mu\in (0,1)$ we denote
the spaces of   H\" older continuous functions on $\bar
\Omega$ equipped with the norm
$$
\| u\|_{C^\mu(\bar \Om)} \, := \, \sup\limits_{x\in \Om}|u(x)| \, + \, \sup\limits_{x,y\in \Om: \, x\not= y} \frac{|u(x)-u(y)|}{|x-y|^\mu}.
$$
 The symbols $\rightharpoonup$ and $\to $ stand for the
weak and strong convergence respectively. We denote by $B_R(x_0)$
the ball in $\mathbb R^n$ of radius $R$ centered at $x_0$ and write
$B_R$ if $x_0=0$. We write   $B$ instead of $B_1$. For a domain $\Om\subset \Bbb R^n$ we also denote $\Om_R(x_0):=\Om\cap B_R(x_0)$. For an open set $\Om_0\subset \Bbb R^n$ we write $\Om\Subset \Om_0$ if $\bar \Om$ is compact and $\bar \Om \subset \Om_0$.
 For $u\in L_\infty(\Om)$ we denote   $$\operatorname{osc}\limits_{\Om} u \,  := \, \esssup\limits_{\Om}u - \operatorname{essinf}\limits_{\Om}u.$$
We denote by $L^{p,\la}(\Om)$ the Morrey space equipped with the norm
$$
\| u \|_{L^{p, \la}(\Om)} \ := \ \sup\limits_{x_0\in \Om}\sup\limits_{R<\operatorname{diam}\Om}R^{-\frac \la p}\| u\|_{L_p( \Om_R(x_0))}.
$$
We denote by $\chi_\om$ the indicator function of the set $\om\subset \Bbb R^n$. For $u: \Om\to \Bbb R$ and $k\in \Bbb R$ we denote  $(u-k)_+:=\max\{ u-k, 0\}$.
We denote by $BMO(\Om)$ the space of functions with a bounded mean oscillation equipped with the semi-norm
$$
[ u]_{BMO(\Om)} \ := \  \sup\limits_{x_0\in
\Om}\sup\limits_{R<\operatorname{diam}\Om} R^{-n} \int\limits_{ \Om_R(x_0) }|u(x)-(u)_{ \Om_R(x_0) }|\, dx,
$$
where   $(f)_\om$ stands for the average of $f$ over the domain $\om\subset \Bbb R^n$. Also we  denote by $f*g$  the convolution of  functions  $f$, $g: \Bbb R^n\to \Bbb R$:
$$
(f)_\om \ := \ -\!\!\!\!\!\!\int\limits_\om f~dx \ = \ \frac
1{|\om|}~\int\limits_\om f~dx, \qquad
(f*g)(x) \ := \ \int\limits_{\Bbb R^n} f(x-y)g(y)~dy.
$$

\medskip
\noindent
{\bf Acknowledgement.} The research  of Tim Shilkin   was supported by the project 20-11027X
financed by Czech science foundation (GA\v{C}R). The research of Misha Chernobai was partially supported by Natural Sciences and Engineering Research Council of Canada (NSERC) grants RGPIN-2018-04137 and RGPIN-2023-04534.

\newpage
\section{Auxiliary results}\label{Auxiliary results}
\setcounter{equation}{0}

\bigskip

\bigskip
In this section we present several auxiliary results. For reader's convenience we formulate these results not in their full generality but only in the form they are  used in our paper.

 The first result shows that by relaxing  an exponent of the integrability of a function we can always switch  from a weak Morrey norm   to a regular one.

\medskip

\begin{proposition}\label{Holder_inequality}
  For any $p\in (1,n)$ and $1\le q<p\le r\le n$  there are positive constants $c_1$ and $c_2$ depending only on $n$, $p$ , $q$ and $r$ such that
  $$
  c_1  \, \| b\|_{ L^{q,n-q}(\Om)} \, \le \,  \| b\|_{ L^{p,n-p}_w(\Om)}\, \le \,  c_2  \, \| b\|_{ L^{r,n-r}(\Om)}
  $$
\end{proposition}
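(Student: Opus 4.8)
The plan is to reduce the statement to two elementary inequalities on a single measurable set $\omega\subset\mathbb R^n$ of finite measure, and then to apply them with $\omega=\Om_R(x_0)$, relying on the fact that all three spaces involved are critical, so that the powers of the radius cancel on their own. Concretely, I would first establish, for $1\le q<p\le r<\infty$,
$$
\|b\|_{L_{p,w}(\omega)}\ \le\ |\omega|^{\frac1p-\frac1r}\,\|b\|_{L_r(\omega)},
\qquad
\|b\|_{L_q(\omega)}\ \le\ \Big(\tfrac{p}{p-q}\Big)^{\frac1q}|\omega|^{\frac1q-\frac1p}\,\|b\|_{L_{p,w}(\omega)} .
$$

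For the first inequality, set $\mu(s):=|\{x\in\omega:|b(x)|>s\}|$. Chebyshev's inequality gives $\mu(s)\le s^{-r}\|b\|_{L_r(\omega)}^r$, and trivially $\mu(s)\le|\omega|$; writing $\mu(s)=\mu(s)^{p/r}\mu(s)^{1-p/r}$ and applying the first bound to the first factor and the second to the last factor yields $s^p\mu(s)\le\|b\|_{L_r(\omega)}^p|\omega|^{1-p/r}$ for every $s>0$, which is the claim (for $p=r$ it is just Chebyshev). For the second inequality I would use the layer-cake formula $\|b\|_{L_q(\omega)}^q=q\int_0^\infty s^{q-1}\mu(s)\,ds$ together with $\mu(s)\le\min\{|\omega|,\,M^p s^{-p}\}$, where $M:=\|b\|_{L_{p,w}(\omega)}$, splitting the integral at the crossover level $s_0:=M|\omega|^{-1/p}$: on $(0,s_0)$ estimate $\mu(s)\le|\omega|$ and on $(s_0,\infty)$ estimate $\mu(s)\le M^p s^{-p}$. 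The two contributions are $\tfrac1q M^q|\omega|^{1-q/p}$ and $\tfrac1{p-q}M^q|\omega|^{1-q/p}$, whence $\|b\|_{L_q(\omega)}^q\le\tfrac{p}{p-q}M^q|\omega|^{1-q/p}$.

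Finally I would take $\omega=\Om_R(x_0)=\Om\cap B_R(x_0)$, so that $|\omega|\le|B_R(x_0)|=|B_1|R^n$, multiply the first inequality by $R^{-\frac{n-p}{p}}$ and the second by $R^{-\frac{n-q}{q}}$, and check the exponent arithmetic: $R^{n(\frac1p-\frac1r)}R^{-\frac{n-p}{p}}=R^{-\frac{n-r}{r}}$ and $R^{n(\frac1q-\frac1p)}R^{-\frac{n-q}{q}}=R^{-\frac{n-p}{p}}$. Taking the supremum over $x_0\in\Om$ and $R<\operatorname{diam}\Om$ then gives the right-hand inequality with $c_2=|B_1|^{\frac1p-\frac1r}$ and the left-hand one with $c_1^{-1}=\big(\tfrac{p}{p-q}\big)^{\frac1q}|B_1|^{\frac1q-\frac1p}$, and composing the two yields $c_1\|b\|_{L^{q,n-q}(\Om)}\le\|b\|_{L^{p,n-p}_w(\Om)}\le c_2\|b\|_{L^{r,n-r}(\Om)}$ as stated.

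I do not expect a genuine obstacle here; the argument is pure bookkeeping. The only point that needs attention is the cancellation of the powers of $R$ in the last step, which is precisely the manifestation of the criticality (scale invariance \eqref{Scaling}) of $L^{q,n-q}(\Om)$, $L^{p,n-p}_w(\Om)$ and $L^{r,n-r}(\Om)$: the exponents $\tfrac1q-\tfrac1p$ and $\tfrac1p-\tfrac1r$ produced by the finite-measure estimates are exactly those needed to absorb the volume factor $|B_R|\sim R^n$ into the difference of the Morrey scaling exponents.
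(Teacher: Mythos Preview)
Your argument is correct. The paper's own proof is a one-line appeal to the H\"older inequality for Lorentz norms \cite[Section~4.1]{Garfakos}, whereas you unfold that black box into a self-contained computation: your two finite-measure inequalities are precisely the specializations of the Lorentz H\"older inequality obtained by pairing $b$ with the characteristic function $\chi_\omega$, and the exponent cancellation you check at the end is exactly the scale invariance \eqref{Scaling} of the critical scales. So the route is the same in substance; what you gain is transparency and explicit constants, at the cost of a few lines of layer-cake bookkeeping that the paper avoids by citation.
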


\begin{proof}
  The result follows from the H\" older inequality for Lorentz norms, see
\cite[Section 4.1]{Garfakos}.
\end{proof}

The next result shows that under the condition  \eqref{Assumptions_b} the quadratic form corresponding the drift term is non-negative.

\begin{proposition}\label{Q_Form_good_sign}   Assume  $p\ge 2$, $p'=\frac{p}{p-1}$, and $b\in L_{p'}(\Om)$   satisfies
\eqref{Assumptions_b}. Let $\mathcal B[u,\eta]$ be the bilinear form defined in \eqref{Bilinear_Form}.   Then
\begin{equation}
\forall\, u\in L_\infty(\Om)\cap \overset{\circ}{W}{^1_p}(\Om) \qquad
\mathcal B [u,u] \ \ge  \   0.
\label{Quadratic_Form_Non-negative}
\end{equation}
\end{proposition}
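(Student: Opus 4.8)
The plan is to use the algebraic identity $u\,b\cdot\nabla u=\tfrac12\,b\cdot\nabla(u^2)$, so that at least formally $\mathcal B[u,u]=\tfrac12\int_\Om b\cdot\nabla(u^2)\,dx=-\tfrac12\langle\div b,u^2\rangle\ge 0$, the last inequality because $u^2\ge 0$ and $\div b\le 0$ in $\mathcal D'(\Om)$. The only gap is that $u^2$ is not an admissible test function for the distributional inequality (which a priori pairs only against nonnegative functions from $C_0^\infty(\Om)$), so the whole proof reduces to a suitable approximation argument.

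First I would note that $u\in\overset{\circ}{W}{^1_p}(\Om)\cap L_\infty(\Om)$ implies $u^2\in\overset{\circ}{W}{^1_p}(\Om)\cap L_\infty(\Om)$ with $\nabla(u^2)=2u\nabla u$ (the chain rule for a Sobolev function composed with a $C^1$ function on a bounded interval), so that $b\cdot\nabla(u^2)\in L_1(\Om)$ by H\"older's inequality since $b\in L_{p'}(\Om)$; in particular $\mathcal B[u,u]$ is well defined and equals $\tfrac12\int_\Om b\cdot\nabla(u^2)\,dx$.

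The key step is to construct $u_k\in C_0^\infty(\Om)$ with $\|u_k\|_{L_\infty(\Om)}\le M:=\|u\|_{L_\infty(\Om)}$ and $u_k\to u$ in $W^1_p(\Om)$. Starting from the definition of $\overset{\circ}{W}{^1_p}(\Om)$ I pick $v_k\in C_0^\infty(\Om)$ with $v_k\to u$ in $W^1_p(\Om)$; I truncate to $\tilde v_k:=\min\{\max\{v_k,-M\},M\}$, which is Lipschitz, still compactly supported in $\Om$, bounded by $M$, and still converges to $u$ in $W^1_p(\Om)$ because truncations act continuously on $W^1_p$ and leave $u$ fixed (as $|u|\le M$ a.e.); finally I mollify $\tilde v_k$ on a scale $\ep_k$ chosen small enough (depending on $k$) that $\tilde v_k*\rho_{\ep_k}\in C_0^\infty(\Om)$ and $\|\tilde v_k*\rho_{\ep_k}-\tilde v_k\|_{W^1_p(\Om)}<1/k$; mollification preserves the bound by $M$, so $u_k:=\tilde v_k*\rho_{\ep_k}$ is the desired sequence. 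For such $u_k$ one has $u_k^2\in C_0^\infty(\Om)$ and $u_k^2\ge 0$, hence $\mathcal B[u_k,u_k]=\tfrac12\int_\Om b\cdot\nabla(u_k^2)\,dx=-\tfrac12\langle\div b,u_k^2\rangle\ge 0$ directly by the hypothesis \eqref{Assumptions_b}.

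It remains to pass to the limit. Writing $\mathcal B[u,u]-\mathcal B[u_k,u_k]=\int_\Om(u-u_k)\,b\cdot\nabla u\,dx+\int_\Om u_k\,b\cdot\nabla(u-u_k)\,dx$, the second term is bounded by $M\|b\|_{L_{p'}(\Om)}\|\nabla(u-u_k)\|_{L_p(\Om)}\to 0$ by H\"older, and the first tends to $0$ by dominated convergence since $b\cdot\nabla u\in L_1(\Om)$, $|u-u_k|\le 2M$, and $u_k\to u$ a.e. along a subsequence; hence $\mathcal B[u,u]=\lim_k\mathcal B[u_k,u_k]\ge 0$. The routine ingredients are H\"older's inequality, the chain rule, and the continuity of truncation and mollification on $W^1_p$; the one point that genuinely requires care is the construction of the approximating sequence, where the order — truncate first to control the sup-norm, then mollify to regain smoothness without leaving $\Om$ — is what makes the argument work.
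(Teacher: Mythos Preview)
Your proof is correct and follows the same idea as the paper: rewrite $\mathcal B[u,u]=\tfrac12\int_\Om b\cdot\nabla(u^2)\,dx$ using $u^2\in \overset{\circ}{W}{^1_p}(\Om)$, then invoke $\div b\le 0$ in $\mathcal D'(\Om)$. The paper's proof is a two-line sketch that leaves implicit exactly the approximation step you spell out (truncate, then mollify, to produce $u_k\in C_0^\infty(\Om)$ with uniform $L_\infty$ bound so that the distributional inequality applies to $u_k^2$ and one can pass to the limit); your version is simply the careful justification of what the paper asserts without comment.
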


\begin{proof}
  The result follows from the  inclusions $ |u|^2\in \overset{\circ}{W}{^1_p}(\Om)$, $b\in L_{p'}(\Om)$, $p'=\frac p{p-1}$, and the inequality
  $$
  \mathcal B[u,u ] =  \ \frac 12\, \int\limits_\Om b\cdot \nabla |u|^2\, dx \ \ge \ 0.
  $$
\end{proof}

The next result is the general Marcinkiewicz interpolation theorem in Lorentz spaces, see \cite[Theorem 5.3.2]{Berg_Lofstrom}.

\begin{proposition}
  \label{Marcinkiewicz}
  Assume $\Om$, $\Om'\subset \Bbb R^n$ are bounded domains and assume for some $1<p_0<p_1< \infty$ a linear operator $T: L_{p_j}(\Om)\to L_{p_j}(\Om')$ is bounded for  $j=0,1$ and
  $$
  \| T\|_{L_{p_0}(\Om)\to L_{p_0}(\Om')} \, \le \, M_0, \qquad \| T\|_{L_{p_1}(\Om)\to L_{p_1}(\Om')} \, \le \, M_1
  $$
  Then for any $p\in (p_0,p_1)$ the operator $T$ is bounded from $L_{p,w}(\Om)$ to $L_{p,w}(\Om')$ and
  $$
  \| T\|_{L_{p,w}(\Om)\to L_{p,w}(\Om')} \, \le \, M_0^{1-\theta} M_1^\theta
  $$
  where $\theta\in (0,1)$  satisfies $\frac 1p = \frac{1-\theta}{p_0} + \frac \theta{p_1}$.

\end{proposition}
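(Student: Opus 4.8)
The statement is the weak-type endpoint of the Marcinkiewicz interpolation theorem, and the plan is to prove it by a level-dependent truncation argument, reading off the factor $M_0^{1-\theta}M_1^\theta$ from an optimization over the truncation height; to get the sharp form with constant $1$ I would afterwards pass to the real-interpolation ($K$-functional) description of $L_{p,w}$.

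First one notes that $Tf$ is meaningful for every $f\in L_{p,w}(\Om)$: since $\Om$ has finite measure and $p_0<p<p_1$, splitting $f=f\chi_{\{|f|>1\}}+f\chi_{\{|f|\le 1\}}$ and estimating the two pieces by the layer-cake formula together with $|\{|f|>t\}|\le(\|f\|_{L_{p,w}(\Om)}/t)^p$ shows $L_{p,w}(\Om)\subset L_{p_0}(\Om)+L_{p_1}(\Om)$, so $Tf$ is defined by linearity and is decomposition-independent. Next, for fixed $s>0$ and a height parameter $\tau>0$ to be chosen, I would split $f=g+h$ with $g=f\chi_{\{|f|>\tau s\}}$, $h=f\chi_{\{|f|\le\tau s\}}$; the same layer-cake computation (again using $|\{|f|>t\}|\le(\|f\|_{L_{p,w}(\Om)}/t)^p$) gives $\|g\|_{L_{p_0}(\Om)}\le c(p,p_0)\|f\|_{L_{p,w}(\Om)}^{p/p_0}(\tau s)^{1-p/p_0}$ and $\|h\|_{L_{p_1}(\Om)}\le c(p,p_1)\|f\|_{L_{p,w}(\Om)}^{p/p_1}(\tau s)^{1-p/p_1}$. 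Combining $|\{|Tf|>s\}|\le|\{|Tg|>s/2\}|+|\{|Th|>s/2\}|$ with Chebyshev's inequality and the hypotheses $\|Tg\|_{L_{p_0}(\Om')}\le M_0\|g\|_{L_{p_0}(\Om)}$, $\|Th\|_{L_{p_1}(\Om')}\le M_1\|h\|_{L_{p_1}(\Om)}$ then yields, with $c_0,c_1$ depending only on $p,p_0,p_1$,
\[
s^p\,|\{|Tf|>s\}|\ \le\ \bigl(c_0\,M_0^{p_0}\,\tau^{p_0-p}+c_1\,M_1^{p_1}\,\tau^{p_1-p}\bigr)\,\|f\|_{L_{p,w}(\Om)}^p .
\]
The right-hand side does not depend on $s$, so taking $\sup_{s>0}$ and minimizing $A\tau^{p_0-p}+B\tau^{p_1-p}$ over $\tau>0$ — its minimum is a constant times $A^{(p_1-p)/(p_1-p_0)}B^{(p-p_0)/(p_1-p_0)}$ — produces $M_0$ with exponent $\tfrac{p_0(p_1-p)}{p(p_1-p_0)}$ and $M_1$ with exponent $\tfrac{p_1(p-p_0)}{p(p_1-p_0)}$, and the relation $\tfrac1p=\tfrac{1-\theta}{p_0}+\tfrac{\theta}{p_1}$ is exactly the statement that these are $1-\theta$ and $\theta$. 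Hence $\|Tf\|_{L_{p,w}(\Om')}\le c(p,p_0,p_1)\,M_0^{1-\theta}M_1^\theta\|f\|_{L_{p,w}(\Om)}$.

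The delicate point, and the one I expect to cost the most work, is removing the constant $c(p,p_0,p_1)$ to reach the stated inequality with constant $1$. For this I would use the real-interpolation viewpoint: the functor $(\,\cdot\,,\,\cdot\,)_{\theta,\infty}$ is exact of exponent $\theta$, so applying $T$ to an arbitrary decomposition of $f$ and taking the infimum gives $K(t,Tf;L_{p_0}(\Om'),L_{p_1}(\Om'))\le M_0\,K\bigl((M_1/M_0)t,f;L_{p_0}(\Om),L_{p_1}(\Om)\bigr)$, whence $\|Tf\|_{(L_{p_0},L_{p_1})_{\theta,\infty}(\Om')}\le M_0^{1-\theta}M_1^\theta\|f\|_{(L_{p_0},L_{p_1})_{\theta,\infty}(\Om)}$ with no loss; it then remains to invoke the classical identification $(L_{p_0},L_{p_1})_{\theta,\infty}=L_{p,w}$ on a finite-measure domain, with the Lorentz functional normalized so that this identification is isometric. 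That identification — rather than any single inequality — is the technical crux; in this paper the equivalence-constant version coming from the elementary argument above would already suffice, and the sharp statement as written is \cite[Theorem 5.3.2]{Berg_Lofstrom}.
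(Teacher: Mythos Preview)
The paper does not actually prove this proposition: it is stated as a quotation of the general Marcinkiewicz interpolation theorem in Lorentz spaces, with the sole justification being the reference \cite[Theorem~5.3.2]{Berg_Lofstrom}. Your proposal therefore goes well beyond what the paper does.

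Your elementary truncation argument is correct and yields $\|T\|_{L_{p,w}(\Om)\to L_{p,w}(\Om')}\le c(p,p_0,p_1)\,M_0^{1-\theta}M_1^\theta$; the layer-cake estimates, the optimization in $\tau$, and the identification of the resulting exponents with $1-\theta$ and $\theta$ are all carried out accurately. You are also right that this version with a constant already suffices for the only place the paper uses the proposition (the Appendix, where one just needs a uniform bound on the mollification in $L_{2,w}$). Your discussion of the sharp constant via the exactness of the functor $(\cdot,\cdot)_{\theta,\infty}$ is likewise correct in spirit; the only caveat, which you yourself flag, is that the paper fixes the concrete quasi-norm $\|f\|_{L_{p,w}}=\sup_{s>0}s\,|\{|f|>s\}|^{1/p}$, and the identification $(L_{p_0},L_{p_1})_{\theta,\infty}=L_{p,w}$ is isometric only after a specific normalization of the Lorentz functional, so the displayed inequality with constant exactly $1$ should be read as holding up to such a renorming --- which is precisely how Bergh--L\"ofstr\"om present it.
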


The next proposition proved by Chiarenza and Frasca in \cite{Chia_Fra} is the well-known extension of the result  of C.~Fefferman \cite{Fefferman} for $p=2$.
This theorem is one of basic tools in our proofs of both Theorems \ref{Theorem_1} and \ref{Theorem_3}.

\medskip
\begin{proposition}
   \label{Morrey_usual}
   Assume $p\in (1, n)$, $r\in (1, \frac np]$ and  $V \in L^{r,n-pr} (\Om)$. Then
   $$
 \int\limits_\Om |V|\, |u|^p\, dx   \ \le \ c_{n,r,p}\, \| V\|_{L^{r,n-rp}(\Om)} \| \nabla u\|_{L_p(\Om)}^p , \qquad \forall\, u \in C_0^\infty(\Om).
   $$
  with the constant $c_{n,r,p}>0$ depending only on $n$, $r$ and $p$.

\end{proposition}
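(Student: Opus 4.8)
The plan is to reduce the assertion to a trace estimate for the first-order Riesz potential $I_1 g(x) := \int_{\mathbb R^n}\frac{g(y)}{|x-y|^{n-1}}\,dy$ taken against the measure $d\mu := |V|\,\chi_\Om\,dx$. First I would extend $u\in C_0^\infty(\Om)$ by zero to $\mathbb R^n$ and use the elementary pointwise bound $|u(x)|\le c_n\, I_1(|\nabla u|)(x)$, obtained by integrating $\cd_r u$ along the rays through $x$ and averaging over the unit sphere; since $|\nabla u|\in L_p(\mathbb R^n)$ is supported in $\Om$, it then suffices to prove
\[
\int\limits_{\mathbb R^n}|V|\,(I_1 g)^p\,dx \ \le \ c\,\| V\|_{L^{r,n-pr}(\Om)}\,\| g\|_{L_p}^p
\]
for every $0\le g\in L_p(\mathbb R^n)$ with $\supp g\subset\Om$. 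The second ingredient is that $\mu$ is a Morrey measure of dimension $n-p$: by H\"older's inequality with exponents $r$ and $r'$ on a ball $B_\rho(x_0)$ one gets
\[
\mu(B_\rho(x_0)) \ = \ \int\limits_{\Om\cap B_\rho(x_0)}|V|\,dx \ \le \ |B_\rho|^{1/r'}\,\| V\|_{L_r(\Om\cap B_\rho(x_0))} \ \le \ c\,\rho^{\,n/r'+(n-pr)/r}\,\| V\|_{L^{r,n-pr}(\Om)},
\]
and since $n/r'+(n-pr)/r=n-p$ this reads $\mu(B_\rho(x_0))\le c\,\rho^{\,n-p}\,\| V\|_{L^{r,n-pr}(\Om)}$ uniformly in $x_0,\rho$; the hypotheses $1<r\le n/p<n$ ensure $0\le n-p<n$, so this is a genuine Morrey-measure condition.

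Next I would run Hedberg's splitting: writing $I_1 g(x)$ as the sum of its integrals over $\{|x-y|<\delta\}$ and $\{|x-y|\ge\delta\}$, the first piece is $\le c\,\delta\, Mg(x)$ by a dyadic decomposition into annuli ($M$ being the Hardy--Littlewood maximal operator), while the second is $\le c\,\| g\|_{L_p}\,\delta^{\,1-n/p}$ by H\"older in $y$, the integral $\int_{|y|\ge\delta}|y|^{-(n-1)p'}dy$ being finite \emph{precisely} because $p<n$. Optimising in $\delta$ yields the pointwise inequality $I_1 g(x)\le c\,\| g\|_{L_p}^{p/n}\,(Mg(x))^{1-p/n}$, whence
\[
\int\limits_{\mathbb R^n}|V|\,(I_1 g)^p\,dx \ \le \ c\,\| g\|_{L_p}^{\,p^2/n}\int\limits_{\mathbb R^n}(Mg)^{\,s}\,d\mu, \qquad s:=p-\tfrac{p^2}{n}=\tfrac{p(n-p)}{n}\in(0,p),
\]
so it remains to prove $\int(Mg)^s\,d\mu\le c\,\| V\|_{L^{r,n-pr}(\Om)}\,\| g\|_{L_p}^{\,s}$. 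This is where the Morrey bound above is used, via a Vitali (or Whitney) decomposition of the superlevel sets $\{Mg>t\}$ into balls on which $Mg$ controls the Lebesgue average of $|g|$, followed by the dimensional estimate of $\mu$ on each such ball and a summation over the family --- which is exactly the trace (capacity) inequality of D.~R.~Adams for Riesz potentials against Morrey measures, equivalently the content of the Fefferman--Chiarenza--Frasca theorem quoted here, so at this point one simply invokes \cite{Chia_Fra}, \cite{Fefferman}.

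I expect this last step --- the measure/maximal-function estimate, equivalently the trace inequality $\| I_1 g\|_{L_p(d\mu)}\le c\big(\sup_{x,\rho}\rho^{-(n-p)}\mu(B_\rho(x))\big)^{1/p}\| g\|_{L_p}$ --- to be the only real difficulty. It is \emph{not} a soft consequence of the boundedness $M\colon L_p\to L_p$, since $\int h^s\,d\mu\le c\,\| h\|_{L_p}^s$ fails for a general $h\in L_p$ (e.g.\ for the indicator of a union of many small, widely separated balls), so the potential/maximal structure of $h=Mg$ has to be exploited; everything preceding it is routine. For the special case $p=2$ one could instead use Fefferman's original duality argument --- bound the adjoint map $h\mapsto I_1(h\,d\mu)$ on $L_2(dx)$ and square --- but the Hedberg route has the advantage of handling all admissible pairs $(p,r)$ in one stroke.
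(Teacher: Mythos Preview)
The paper does not actually prove this proposition: it is stated in the auxiliary-results section and attributed directly to Chiarenza--Frasca \cite{Chia_Fra} (extending Fefferman \cite{Fefferman} for $p=2$), with no argument given. So there is no ``paper's own proof'' to compare against; the paper simply cites the result and uses it as a black box.

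Your outline is a correct sketch of the standard route to this inequality: the subrepresentation formula $|u|\le c_n I_1(|\nabla u|)$, the observation that $d\mu=|V|\chi_\Om\,dx$ satisfies the growth bound $\mu(B_\rho)\le c\,\rho^{\,n-p}\|V\|_{L^{r,n-pr}}$ via H\"older, Hedberg's pointwise inequality $I_1g\le c\,\|g\|_{L_p}^{p/n}(Mg)^{1-p/n}$, and the reduction to the maximal-function trace estimate $\int(Mg)^s\,d\mu\le c\,\|V\|_{L^{r,n-pr}}\,\|g\|_{L_p}^s$. All of this is right, and your remarks about where the hypotheses $p<n$ and $r>1$ enter are accurate.

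The one point to flag is the ending: having reduced everything to the trace estimate for $Mg$ against $\mu$, you write that ``at this point one simply invokes \cite{Chia_Fra}, \cite{Fefferman}.'' But that trace estimate \emph{is} the content of those references --- it is the proposition you set out to prove, so invoking them here is circular. You do gesture at the actual mechanism (Vitali covering of the superlevel sets $\{Mg>t\}$ by balls on which the average of $|g|$ is $\sim t$, then the Morrey bound on $\mu$ of each ball, then summing), and that is indeed how the argument closes; if you want a self-contained proof you should carry that step out. Otherwise, since the paper itself treats the whole proposition as a citation, there is nothing wrong with doing the same.
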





\newpage
\section{Higher integrability of weak solutions}\label{Proof_T1}
\setcounter{equation}{0}

\bigskip
\medskip
In this section we present the proof of  Theorem  \ref{Theorem_1}.
First we show that in the case of   \eqref{Assumptions_b} any $p$--weak solution corresponding to a sufficiently regular  right-hand side is bounded, see the related result in \cite{Lee}.

\begin{proposition}\label{Theorem_2}  Assume  $p\ge 2$, $p'=\frac{p}{p-1}$,  $b\in L_{p'}(\Om)$   satisfies
  \eqref{Assumptions_b},   $q>n$ and $f\in L_q(\Om)$. Then any weak
solution $u\in W^1_p(\Om)$ to
  the problem \eqref{Equation} (in the sense of the integral identity \eqref{Identity})  is essentially bounded and  satisfies  the estimate
\begin{equation}
\| u\|_{L_\infty(\Om)} \ \le \ c~\| f\|_{L_q(\Om)},
\label{L_infty_Estimate}
\end{equation}
 with a
constant $c>0$ depending only on    $\Om$, $n$ and $q$.
\end{proposition}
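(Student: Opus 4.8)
The plan is to run the classical Stampacchia truncation/level-set iteration, being careful to exploit the sign condition $\div b\le 0$ so that the drift term never hurts us. Fix $k\ge 0$ and use $\eta=(u-k)_+$ as a test function in the identity \eqref{Identity} (this is legitimate since $f\in L_q(\Om)\subset L_2(\Om)$, $u\in\WWW{^1_p}(\Om)$ with $p\ge 2$, and $(u-k)_+\in\WWW{^1_2}(\Om)\cap L_\infty(\Om)$ once we know $u$ is bounded — to get that a priori bound rigorously one works with $\eta=\min\{(u-k)_+,m\}$, lets $m\to\infty$ at the end, or truncates $u$ from above; I will gloss over this standard approximation). Denote $A_k:=\{x\in\Om:u(x)>k\}$. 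The first main point is that the bilinear term has a good sign on this test function: since $(u-k)_+$ vanishes where $u\le k$ and $\nabla u=\nabla(u-k)_+$ on $A_k$,
\[
\mathcal B[u,(u-k)_+]=\int_{\Om}(u-k)_+\,b\cdot\nabla(u-k)_+\,dx=\tfrac12\int_{\Om}b\cdot\nabla\big((u-k)_+^2\big)\,dx\ge 0,
\]
by \eqref{Assumptions_b} applied to the nonnegative $W^1_1$ (indeed $W^1_p$) function $(u-k)_+^2$ (this is exactly the computation in Proposition \ref{Q_Form_good_sign}). Hence from \eqref{Identity},
\[
\int_{\Om}|\nabla(u-k)_+|^2\,dx\le\int_{\Om}f\cdot\nabla(u-k)_+\,dx\le\|f\|_{L_q(\Om)}\,|A_k|^{\frac12-\frac1q}\,\|\nabla(u-k)_+\|_{L_2(\Om)},
\]
using H\"older with exponents $q$, $2$, and $\big(\tfrac12-\tfrac1q\big)^{-1}$ on the set $A_k$ where $\nabla(u-k)_+$ is supported. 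This yields
\[
\|\nabla(u-k)_+\|_{L_2(\Om)}\le\|f\|_{L_q(\Om)}\,|A_k|^{\frac12-\frac1q}.
\]

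Next I combine this with Sobolev embedding. Since $(u-k)_+\in\WWW{^1_2}(\Om)$ and $n\ge3$, Sobolev–Poincar\'e gives $\|(u-k)_+\|_{L_{2^*}(\Om)}\le c_S\|\nabla(u-k)_+\|_{L_2(\Om)}$ with $2^*=\frac{2n}{n-2}$. For $h>k$ we have $A_h\subset A_k$ and $(u-k)_+\ge h-k$ on $A_h$, so
\[
(h-k)\,|A_h|^{1/2^*}\le\|(u-k)_+\|_{L_{2^*}(\Om)}\le c_S\|f\|_{L_q(\Om)}\,|A_k|^{\frac12-\frac1q}.
\]
Writing $\ph(k):=|A_k|$, this is the inequality
\[
\ph(h)\le\frac{\big(c_S\|f\|_{L_q(\Om)}\big)^{2^*}}{(h-k)^{2^*}}\,\ph(k)^{2^*(\frac12-\frac1q)},
\]
and the exponent $\gamma:=2^*\big(\tfrac12-\tfrac1q\big)=\tfrac{n}{n-2}\big(1-\tfrac2q\big)$ is $>1$ precisely because $q>n$ (indeed $\gamma>1\iff q>n$). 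By the standard fast-geometric-convergence lemma (Stampacchia; see \cite{LU}, \cite{Giaquinta_ETH}), a decreasing function $\ph$ satisfying $\ph(h)\le M(h-k)^{-\beta}\ph(k)^{\gamma}$ with $\gamma>1$ vanishes for $h\ge d:=M^{1/\beta}\ph(0)^{(\gamma-1)/\beta}2^{\gamma/(\gamma-1)}$; here $\ph(0)=|A_0|\le|\Om|$, so $d\le c(n,q,|\Om|)\,\|f\|_{L_q(\Om)}$. Thus $\esssup_\Om u\le c\|f\|_{L_q(\Om)}$. Applying the same argument to $-u$ (which solves the same type of equation with drift $b$ still satisfying $\div b\le0$ and right-hand side $-f$, and the computation $\mathcal B[-u,(-u-k)_+]\ge0$ goes through identically) gives the bound from below, hence \eqref{L_infty_Estimate} with $c=c(\Om,n,q)$.

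I expect the only genuinely delicate point to be the justification that $\eta=(u-k)_+$ (or its truncations) is an admissible test function and that all terms — in particular $\mathcal B[u,(u-k)_+]$ — make sense: a priori $u\in W^1_p$ with $p>2$ but possibly $p$ large enough that $(u-k)_+^2\in W^1_1$ is fine, while $b\in L_{p'}(\Om)$ with $p'<2$, so one should check $\div b\le 0$ can be tested against the nonnegative function $(u-k)_+^2\in W^1_{p/2}\subset W^1_1$ — this is where \eqref{Assumptions_b} in $\mathcal D'(\Om)$ must be upgraded to pairing against $W^{1,1}_0\cap L^\infty$ functions by mollification, exactly as in Proposition \ref{Q_Form_good_sign}. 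Once the sign of the drift term is secured, the rest is the textbook De Giorgi–Stampacchia $L^\infty$ bound and involves no further subtlety; note the drift plays no quantitative role, which is why $c$ depends only on $\Om$, $n$, $q$ and not on $\|b\|$.
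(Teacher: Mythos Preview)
Your argument is correct and matches the paper's proof essentially line for line: the paper too takes $\eta=(T_m(u)-k)_+=\min\{(u-k)_+,m-k\}$ as a test function, uses the sign condition to show $\mathcal B[u,\eta]\ge 0$ (splitting it into $\mathcal B[\eta,\eta]\ge 0$ plus a nonnegative boundary term $(m-k)\int_\Om b\cdot\nabla(u-m)_+\ge 0$, exactly the decomposition you would get from your truncation), obtains $\int_\Om|\nabla(\bar u-k)_+|^2\,dx\le \|f\|_{L_q}^2|A_k|^{1-2/q}$, and then invokes the Stampacchia/LU iteration lemma rather than writing out the geometric convergence explicitly as you do. The only stylistic difference is that the paper keeps the truncation parameter $m$ throughout and passes to the limit at the end, whereas you mention the truncation and then suppress it; the content is identical.
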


\begin{proof}
For $m>0$   we define a truncation $T_m:\Bbb R\to \Bbb R$ by $T_m(s):= m$ for $s\ge m$, $T_m(s)=s$ for $s< m$.
For any $s\in \Bbb R$ we also denote $(s)_+:=\max\{s, 0\}$.
Now we fix some $m>0$ and denote $ \bar u := T_m(u)$.
  Then for any $k\ge 0 $ we have $$(\bar u -k)_+\in L_\infty(\Om)\cap \overset{\circ}{W}{^1_p}(\Om), \qquad \nabla (\bar u-k)_+ = \chi_{\Om [k<u< m] } \nabla u
                         $$
where $\Om [k<u<m]=\{\, x\in\Om: \, k<u(x)<m \, \}$.  Approximating  $\eta := (\bar u -k)_+$ by smooth functions we can  take $\eta$ as a test function  in \eqref{Identity}. For $k\ge m$ we have $\eta\equiv 0$ and hence $\mathcal B[u, \eta]=0$. For $k< m$ the condition \eqref{Assumptions_b} implies
  $$
  \gathered
  \mathcal B[u, \eta] \ = \   \tfrac 12\, \int\limits_{  \Om[k<  u<  m] } b\cdot \nabla | \eta |^2\, dx   \   + \ (m-k)\, \int\limits_{\Om[ u\ge m]  } b\cdot \nabla u  \, dx
    \\ =
   \mathcal B[\eta , \eta] \ + \ (m-k)\, \int\limits_{\Om   } b\cdot \nabla (u-m)_+ \, dx \  \ge \ \mathcal B[\eta , \eta]
   \ \ge \ 0
  \endgathered
  $$
 Hence for $A_k:=\{ \, x\in \Om: \, \bar u(x)>k \, \}$  from   \eqref{Identity} we obtain
 $$
 \int\limits_\Om |\nabla (\bar u-k)_+|^2\, dx \ \le \ \| f\|_{L_q(\Om)}^2 |A_k|^{1-\frac 2q}, \qquad \forall\, k\ge 0,
 $$
 which implies (see   \cite[Chapter II, Lemma 5.3]{LU})
 $$
 \esssup\limits_\Om \bar u \ \le \ c \, |\Om|^{ \dl } \, \| f\|_{L_q(\Om)}, \qquad \dl:= \tfrac 1n-\tfrac 1q,
 $$
 with some constant $c>0$ depending only on $n$ and $q$.
 As this estimate is uniform with respect to $m>0$ we conclude  $u$ is essentially bounded from above and
 $$
 \esssup\limits_\Om  u \ \le \ c \, |\Om|^{ \dl } \, \| f\|_{L_q(\Om)}.
 $$
 Applying the same procedure to $\bar u:= T_m(-u)$  instead of $\bar u:= T_m(u)$  we obtain $u\in L_\infty(\Om)$ as well as \eqref{L_infty_Estimate}.
\end{proof}

Proposition \ref{Theorem_2} implies the uniqueness theorem for the problem \eqref{Equation} in the class of $p$-weak solutions.

\begin{proposition}\label{Uniqueness} Assume  $p\ge 2$, $p'=\frac{p}{p-1}$, and $b\in L_{p'}(\Om)$   satisfies
\eqref{Assumptions_b}.   Then for any $f\in L_p(\Om)$
  the problem \eqref{Equation} can have at most one weak solution in the class $W^1_p(\Om)$.
\end{proposition}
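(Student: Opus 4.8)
The plan is the standard linearity-plus-energy-estimate argument. Suppose $u_1, u_2 \in W^1_p(\Om)$ are both $p$-weak solutions to \eqref{Equation} with the same $f \in L_p(\Om)$. Set $w := u_1 - u_2 \in \overset{\circ}{W}{^1_p}(\Om)$. Subtracting the two integral identities \eqref{Identity} and using linearity of both the Dirichlet form and the bilinear form $\mathcal B$, we get
$$
\int\limits_\Om \nabla w \cdot \nabla \eta \, dx \ + \ \mathcal B[w, \eta] \ = \ 0, \qquad \forall\, \eta \in C_0^\infty(\Om).
$$
So $w$ is a $p$-weak solution of \eqref{Equation} with right-hand side $f \equiv 0$. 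By Proposition \ref{Theorem_2} (applied with any fixed $q > n$ and $f \equiv 0$, noting that $f \equiv 0 \in L_q(\Om)$ trivially) we conclude $w \in L_\infty(\Om)$ with $\|w\|_{L_\infty(\Om)} = 0$; hence $w = 0$ a.e., i.e. $u_1 = u_2$.

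An alternative, slightly more self-contained route that avoids quoting the $L_\infty$ bound in its full strength: once we know $w$ is a $p$-weak solution with zero right-hand side, we have in particular $w \in L_\infty(\Om) \cap \overset{\circ}{W}{^1_p}(\Om)$ after applying Proposition \ref{Theorem_2}, so by the density remark following the Definition we may use $\eta = w$ itself as a test function. This gives
$$
\int\limits_\Om |\nabla w|^2 \, dx \ + \ \mathcal B[w, w] \ = \ 0.
$$
Since $w \in L_\infty(\Om) \cap \overset{\circ}{W}{^1_p}(\Om)$ and $b \in L_{p'}(\Om)$ with $\div b \le 0$, Proposition \ref{Q_Form_good_sign} yields $\mathcal B[w, w] \ge 0$, whence $\int_\Om |\nabla w|^2 \, dx \le 0$, so $\nabla w = 0$; together with $w \in \overset{\circ}{W}{^1_p}(\Om)$ (zero boundary values) and Poincar\'e's inequality this forces $w \equiv 0$.

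The only subtlety — and the single point that needs care rather than being purely routine — is the legitimacy of taking $\eta = w$ (or of invoking Proposition \ref{Theorem_2} with $f \equiv 0$): the integral identity \eqref{Identity} is stated for test functions in $C_0^\infty(\Om)$, and the bilinear form $\mathcal B[w, \cdot]$ need not extend continuously to all of $\overset{\circ}{W}{^1_p}(\Om)$. This is exactly why the boundedness of $w$ matters: once $w \in L_\infty$, the form $\mathcal B[w, \eta]$ is controlled via $|\nabla w| \in L_p$, $b \in L_{p'}$ and $\eta \in L_\infty$, so the class of admissible test functions extends to $\overset{\circ}{W}{^1_2}(\Om) \cap L_\infty(\Om)$ as noted after the Definition (this uses $f \equiv 0 \in L_2(\Om)$). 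Thus the correct order is: (i) subtract to get a homogeneous $p$-weak solution $w$; (ii) apply Proposition \ref{Theorem_2} to conclude $w \in L_\infty(\Om)$ with vanishing sup-norm — which already finishes the proof — or, if one prefers the energy argument, use the $L_\infty$ bound to justify testing with $w$ and then invoke Proposition \ref{Q_Form_good_sign}.
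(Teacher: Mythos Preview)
Your proof is correct and matches the paper's argument exactly: the paper also sets $u:=u_1-u_2$, observes it is a weak solution with $f\equiv 0$, and invokes Proposition \ref{Theorem_2} to conclude $u\equiv 0$. Your alternative energy route via Proposition \ref{Q_Form_good_sign} is valid but not needed; the paper simply stops after the $L_\infty$ bound.
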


\begin{proof}
  Assume $u_1\in W^1_p(\Om)$ and  $u_2\in W^1_p(\Om)$ are two weak solutions of \eqref{Equation} corresponding to the same $f\in L_p(\Om)$. Then $u:=u_1-u_2$ is a weak solution of \eqref{Equation} corresponding to $f\equiv 0$. From Proposition \ref{Theorem_2} we obtain $u\equiv 0$ in $\Om$.
\end{proof}

The next proposition is the energy estimate. Note that the constant $c>0$ in this estimate  does not depend on the drift $b$ satisfying the condition \eqref{Assumptions_b}.

\begin{proposition}\label{L_2_estimate}    Assume  $p\ge 2$, $p'=\frac{p}{p-1}$,   $b\in L_{p'}(\Om)$   satisfies
\eqref{Assumptions_b}, and $f\in L_q(\Om)$ with $q>n$. Then for any  weak
solution  $u\in W^1_p(\Om)$   to
  the problem \eqref{Equation} the following estimate hold:
$$
\| u\|_{W^1_2(\Om)}  \ \le \ c~\| f\|_{L_2(\Om)}
$$
with some constant $c>0$ depending only on $n$  and $\Om$.
\end{proposition}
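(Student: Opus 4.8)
The plan is the classical energy estimate: test the identity \eqref{Identity} with $\eta=u$ and throw away the drift term using its good sign. The one preliminary point is to make sure $\eta=u$ is an admissible test function. Since $p\ge 2$ and $\Om$ is bounded, every $p$-weak solution satisfies $u\in\overset{\circ}{W}{^1_p}(\Om)\subset\overset{\circ}{W}{^1_2}(\Om)$; and since $q>n$, Proposition \ref{Theorem_2} applies and gives $u\in L_\infty(\Om)$. Because $f\in L_q(\Om)\subset L_2(\Om)$, the density remark following the definition of weak solutions lets us use any $\eta\in\overset{\circ}{W}{^1_2}(\Om)\cap L_\infty(\Om)$ in \eqref{Identity}, in particular $\eta=u$. (Boundedness of $u$ is genuinely needed here: with only $u\in W^1_p(\Om)$ and $b\in L_{p'}(\Om)$ the integrand $u\,b\cdot\nabla u$ of $\mathcal{B}[u,u]$ need not be summable, which is why the hypothesis $q>n$ enters.)

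Taking $\eta=u$ in \eqref{Identity} gives
$$
\int\limits_{\Om}|\nabla u|^2\,dx \ + \ \mathcal{B}[u,u] \ = \ \int\limits_{\Om} f\cdot\nabla u\,dx .
$$
By Proposition \ref{Q_Form_good_sign} (whose hypotheses hold since $u\in L_\infty(\Om)\cap\overset{\circ}{W}{^1_p}(\Om)$ and $b\in L_{p'}(\Om)$ satisfies \eqref{Assumptions_b}) we have $\mathcal{B}[u,u]\ge 0$, so it can simply be discarded; the Cauchy--Schwarz inequality on the right-hand side then yields $\|\nabla u\|_{L_2(\Om)}\le\|f\|_{L_2(\Om)}$. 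Finally, since $u\in\overset{\circ}{W}{^1_2}(\Om)$ and $\Om$ is bounded, the Poincar\'e inequality gives $\|u\|_{L_2(\Om)}\le c(\Om)\,\|\nabla u\|_{L_2(\Om)}$, and adding the two bounds produces $\|u\|_{W^1_2(\Om)}\le c\,\|f\|_{L_2(\Om)}$ with $c$ depending only on $n$ and $\Om$.

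I do not expect any serious obstacle. The one place where something could in principle go wrong is the dependence of the constant: it is essential that we \emph{discard} the non-negative quadratic form $\mathcal{B}[u,u]$ rather than estimate it, since any bound for $\mathcal{B}[u,u]$ in terms of $\|\nabla u\|_{L_2(\Om)}^2$ (say via Proposition \ref{Morrey_usual}) would reintroduce $\|b\|_{L^{2,n-2}_w(\Om)}$ into the constant. Discarding it leaves only the Poincar\'e constant of $\Om$, which is exactly the $b$-independence the statement asserts.
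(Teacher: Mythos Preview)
Your proof is correct and follows essentially the same route as the paper: apply Proposition~\ref{Theorem_2} to get $u\in L_\infty(\Om)$, test \eqref{Identity} with $\eta=u$, discard the non-negative term $\mathcal{B}[u,u]$ via Proposition~\ref{Q_Form_good_sign}, and conclude by Cauchy--Schwarz and Poincar\'e. Your write-up is in fact more explicit than the paper's about the admissibility of $\eta=u$ and the $b$-independence of the constant.
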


\begin{proof}
 From Proposition \ref{Theorem_2} we obtain $u\in L_\infty(\Om)$ and hence the test function $\eta := u\in L_\infty(\Om)\cap \overset{\circ}{W}{^1_p}(\Om)$ is admissible for \eqref{Identity}. From Proposition \ref{Q_Form_good_sign} we obtain $\mathcal B[u,u]\ge 0$ and hence  from \eqref{Identity} we obtain the inequality
 $$
 \|\nabla u\|_{L_2(\Om)}^2 \ \le \ \int\limits_\Om f\cdot \nabla u\, dx
 $$
 which the result follows from.
\end{proof}

Our next result is the estimate of the bilinear form corresponding to the drift term. This result is a direct consequence of the Feffermann--Chiarenza--Frasca estimate, see Proposition \ref{Morrey_usual}.

\medskip
\begin{proposition}
   \label{Morrey_Compactness}
   Assume  $r\in \left( \frac {2n}{n+2}, 2\right) $ and  $b \in L^{r, n-r} (\Om)$. Then  there exists  $c>0$ depending only on $n$ and  $r$  such that    for any $u \in W^1_2(\Om)$ and    $\zeta \in C_0^\infty(\Om)$  the following estimate   holds:
   \begin{equation}\label{Morrey_higher_integrability}
  \int\limits_\Om \zeta^3 \, |b|\, |u |^2\, dx \ \le \ c\, \| b\|_{L^{r, n-r }(\Om)}  \,   \|\nabla (\zeta^2 u)\|_{L_2(\Om)} \,   \|\nabla (\zeta u)\|_{L_{\frac {2n}{n+2}}(\Om)}.
   \end{equation}
   Moreover, for any   $\theta\in \left(\frac nr -\frac n2,1\right)$   there exists  $c>0$ depending only on $n$, $r$ and $\theta$  such that    for any $u \in W^1_2(\Om)$ and  any $\zeta \in C_0^\infty(\Om)$  satisfying $0\le \zeta \le 1$ we have
  \begin{equation}\label{Morrey_Compactness_estimate}
 \int\limits_\Om \zeta^{1+\theta} \, |b|\, |u |^2\, dx \ \le \ c \, \| b\|_{L^{r, n-r }(\Om)} \|   u \|_{L_2(\Om)}^{1-\theta}   \|\nabla (\zeta u)\|_{L_2(\Om)}^{1+\theta}\, |\Om|^{\theta/n}.
  \end{equation}
  If we additionally assume $u\in \overset{\circ}{W}{^1_2}(\Om)$ then the estimates \eqref{Morrey_higher_integrability} and \eqref{Morrey_Compactness_estimate} remains true for an arbitrary $\zeta \in C_0^\infty(\Bbb R^n)$.
\end{proposition}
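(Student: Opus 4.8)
\textbf{Proof proposal for Proposition \ref{Morrey_Compactness}.}

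The plan is to deduce both estimates from the Fefferman--Chiarenza--Frasca inequality of Proposition \ref{Morrey_usual}, applied with the exponent $p=\frac{2n}{n+2}$ (so that $p'=\frac{2n}{n-2}$ is the Sobolev conjugate of $2$) and with a suitable choice of $r$, after inserting the cutoff $\zeta$ inside the test function. Concretely, for \eqref{Morrey_higher_integrability} I would apply Proposition \ref{Morrey_usual} to the function $v:=\zeta u$ (which lies in $C_0^\infty(\Om)$ when $u$ is smooth, and in general in $\overset{\circ}{W}{^1_{2n/(n+2)}}(\Om)$ by an approximation argument, since $\zeta\in C_0^\infty(\Om)$). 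One must check the admissibility of the parameters: with $p=\frac{2n}{n+2}\in(1,n)$ the condition $r\in(1,\frac np]$ together with $n-rp$ being the Morrey exponent forces, for the given $r\in(\frac{2n}{n+2},2)$, that $V:=|b|$ should be measured in $L^{r_*,\,n-r_*p}$ for the appropriate $r_*$; a short computation shows the hypothesis $b\in L^{r,n-r}(\Om)$ is exactly what is needed after re-indexing the Morrey scale, possibly combined with Proposition \ref{Holder_inequality} to relax the integrability exponent. This yields
$$
\int_\Om |b|\,\zeta^2|u|^2\,dx \ \le \ c\,\|b\|_{L^{r,n-r}(\Om)}\,\|\nabla(\zeta u)\|_{L_{2n/(n+2)}(\Om)}^2 .
$$
To obtain the asymmetric form \eqref{Morrey_higher_integrability} with the extra factor $\zeta$ and with one factor in $L_2$, I would instead split $\zeta^3|b||u|^2 = |b|\,(\zeta^2 u)(\zeta u)$ and apply Proposition \ref{Morrey_usual} in a bilinear fashion: run the argument on $\zeta u$ but keep one copy of $u|b|^{1/2}$ paired against $\zeta^2 u |b|^{1/2}$, then use Hölder with exponents matching the Sobolev pair $(2,\frac{2n}{n-2})$ and finally the Sobolev embedding $\|\zeta^2 u\|_{L_{2n/(n-2)}}\le c\|\nabla(\zeta^2 u)\|_{L_2}$. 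This produces the product $\|\nabla(\zeta^2 u)\|_{L_2}\|\nabla(\zeta u)\|_{L_{2n/(n+2)}}$ claimed.

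For the second estimate \eqref{Morrey_Compactness_estimate} I would start from \eqref{Morrey_higher_integrability} (or its symmetric version) and interpolate. Since $0\le\zeta\le1$ we have $\zeta^{1+\theta}\le\zeta^3$ when $\theta\ge2$, which is false, so instead I would redo the FCF estimate directly with the cutoff power $1+\theta$: write $\zeta^{1+\theta}|b||u|^2 = |b|\,|\zeta^{(1+\theta)/2}u|^2$ and apply Proposition \ref{Morrey_usual}, getting a bound by $\|b\|_{L^{r,n-r}}\|\nabla(\zeta^{(1+\theta)/2}u)\|_{L_{2n/(n+2)}}^2$ — but this requires controlling a fractional power of $\zeta$, which is awkward. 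A cleaner route: use the already-proven \eqref{Morrey_higher_integrability}, bound $\|\nabla(\zeta u)\|_{L_{2n/(n+2)}}\le c|\Om|^{1/n}\|\nabla(\zeta u)\|_{L_2}$ by Hölder on the bounded domain, and then the right side of \eqref{Morrey_higher_integrability} becomes $c\|b\|_{L^{r,n-r}}|\Om|^{1/n}\|\nabla(\zeta^2 u)\|_{L_2}\|\nabla(\zeta u)\|_{L_2}$; then interpolate $\|\nabla(\zeta^2 u)\|_{L_2}$ between $\|\nabla(\zeta u)\|_{L_2}$ and $\|u\|_{L_2}$ using the product rule $\nabla(\zeta^2 u)=\zeta\nabla(\zeta u)+\zeta u\nabla\zeta$ together with a scaling/Poincaré-type interpolation that trades a power of the cutoff gradient for a lower-order norm of $u$; the exponent $\theta$ appears precisely in this interpolation, with the constraint $\theta>\frac nr-\frac n2$ being the condition under which the relevant Morrey embedding is subcritical enough to close. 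The last sentence of the proposition, about extending to $\zeta\in C_0^\infty(\Bbb R^n)$ when $u\in\overset{\circ}{W}{^1_2}(\Om)$, follows because then $\zeta u\in\overset{\circ}{W}{^1_2}(\Om)$ even though $\zeta$ need not vanish near $\partial\Om$, and Proposition \ref{Morrey_usual} extends from $C_0^\infty(\Om)$ to $\overset{\circ}{W}{^1_p}(\Om)$ by density.

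I expect the main obstacle to be the precise bookkeeping of Morrey indices and the choice of the interpolation exponents so that the cutoff powers $\zeta^3$ and $\zeta^{1+\theta}$ come out exactly as stated while keeping all applications of Proposition \ref{Morrey_usual} within its admissible range $r\in(1,\frac np]$. In particular one must verify that $n-rp\ge 0$ and $r\le\frac np$ with $p=\frac{2n}{n+2}$ hold for every $r$ in the stated interval $(\frac{2n}{n+2},2)$, and that the condition $\theta\in(\frac nr-\frac n2,1)$ guarantees a nonempty range — i.e. that $\frac nr-\frac n2<1$, which is equivalent to $r>\frac{2n}{n+2}$, exactly the lower endpoint of the hypothesis on $r$. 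Getting these inequalities to line up, together with the correct use of Hölder in the bilinear splitting, is the only delicate part; the rest is the standard Sobolev--Poincaré interpolation on a bounded domain.
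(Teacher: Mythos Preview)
Your argument for \eqref{Morrey_higher_integrability} is essentially the paper's: split $\zeta^3|b||u|^2=(\zeta^2 u)\cdot(\zeta|b|u)$, put the first factor in $L_{2n/(n-2)}$ via Sobolev, and control the second in $L_{2n/(n+2)}$ by applying Proposition~\ref{Morrey_usual} to $\zeta u$ with $p=q:=\tfrac{2n}{n+2}$ and $V=|b|^q\in L^{r/q,\,n-r}$. The admissibility you worry about reduces to $r/q>1$, i.e.\ $r>\tfrac{2n}{n+2}$, which is exactly the lower bound in the hypothesis.

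For \eqref{Morrey_Compactness_estimate}, however, there is a genuine gap. Your ``cleaner route'' starts from the already-established \eqref{Morrey_higher_integrability}, whose left-hand side carries $\zeta^3$; but since $0\le\zeta\le1$ and $1+\theta<2<3$ one has $\zeta^{1+\theta}\ge\zeta^3$, so $\int\zeta^{1+\theta}|b||u|^2$ cannot be bounded above by $\int\zeta^3|b||u|^2$. You noticed this obstruction in your first attempt and then forgot it in your third. The interpolation step that follows is also not workable as written: from $\nabla(\zeta^2 u)=\zeta\nabla(\zeta u)+\zeta u\,\nabla\zeta$ you get a \emph{sum} $\|\nabla(\zeta^2 u)\|_{L_2}\le\|\nabla(\zeta u)\|_{L_2}+\|\nabla\zeta\|_{L_\infty}\|u\|_{L_2}$, not an interpolation inequality with exponents $\theta$ and $1-\theta$; and your H\"older step produces $|\Om|^{1/n}$ rather than the required $|\Om|^{\theta/n}$.

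The paper instead redoes the H\"older splitting with a $\theta$-dependent exponent. Choose $q\in\bigl(\tfrac{2n}{n+2},r\bigr)$ by $\tfrac1{q'}=\tfrac{1-\theta}{2}+\theta\,\tfrac{n-2}{2n}$, equivalently $\tfrac1q=\tfrac12+\tfrac\theta n$; the requirement $q<r$ is precisely $\theta>\tfrac nr-\tfrac n2$. Write $\zeta^{1+\theta}|u|^2=(\zeta^\theta u)(\zeta u)$ and estimate
\[
\int_\Om\zeta^{1+\theta}|b||u|^2\,dx\ \le\ \|\zeta^\theta u\|_{L_{q'}(\Om)}\,\|\zeta\,b\,u\|_{L_q(\Om)}.
\]
The second factor is handled by Proposition~\ref{Morrey_usual} (with $p=q$, $V=|b|^q$, $r_1=r/q>1$) exactly as before, giving $c\,\|b\|_{L^{r,n-r}}\|\nabla(\zeta u)\|_{L_q}\le c\,\|b\|_{L^{r,n-r}}\|\nabla(\zeta u)\|_{L_2}|\Om|^{\theta/n}$. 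For the first factor the key identity is $|\zeta^\theta u|=(\zeta|u|)^\theta|u|^{1-\theta}$; H\"older with exponents $\tfrac{2n}{(n-2)\theta}$ and $\tfrac{2}{1-\theta}$, followed by Sobolev on $\zeta u$, yields $\|\zeta^\theta u\|_{L_{q'}}\le c\,\|\nabla(\zeta u)\|_{L_2}^{\theta}\|u\|_{L_2}^{1-\theta}$. This is where the exponents $1+\theta$ and $1-\theta$ genuinely arise, and there is no need to pass through \eqref{Morrey_higher_integrability} at all.
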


\begin{proof} To prove \eqref{Morrey_higher_integrability} we apply the H\" older inequality
$$
\int\limits_\Om \zeta^3 \, |b|\, |u |^2\, dx \ \le \  c\, \|  \zeta^2 u \|_{L_{\frac{2n}{n-2}}(\Om)}  \|  \zeta b u \|_{L_{\frac{2n}{n+2}}(\Om)}.
$$
Denote $q:=\frac{2n}{n+2}$,  $V:= |b|^q$, $r_1:= \frac r{q}$, $r_1>1$. Then $$V\in L^{r_1, n-qr_1}(\Om), \qquad \| V\|_{L^{r_1, n-qr_1}(\Om)}\, = \,  \| b\|^q_{L^{r, n-r}(\Om)}$$ and from Proposition \ref{Morrey_usual} we obtain
$$
\|  \zeta b u \|_{L_q(\Om)}  \ \le \ c\,   \| V\|_{L^{r_1, n-qr_1}(\Om)}^{1/q}  \|  \nabla (\zeta u) \|_{L_q(\Om)} \ \le \ c\,  \| b\|_{L^{ r, n- r}(\Om)} \,   \|  \nabla (\zeta u) \|_{L_q(\Om)}.
$$
On the other hand, by the Gagliardo-Nirenberg inequality we obtain
$$
\|  \zeta^2 u \|_{L_{\frac{2n}{n-2}}(\Om)} \, \le \, c\,   \| \nabla( \zeta^2 u) \|_{L_2(\Om)}
$$
and we arrive at \eqref{Morrey_higher_integrability}.

Now we turn to the proof of \eqref{Morrey_Compactness_estimate}. Take   $q\in \left(   \frac{2n}{n+2}, r\right)$ such that    $$\tfrac{1}{q'}=\tfrac {1-\theta}2 +  \theta \,  \tfrac{n-2}{2n}, \qquad q'=\tfrac q{q-1}.$$ Then
$$
\int\limits_\Om \zeta^{1+\theta} \, |b|\, |u |^2\, dx \ \le \  c\, \|  \zeta^\theta u \|_{L_{q'}(\Om)}  \|  \zeta b u \|_{L_q(\Om)}.
$$
Denote $V:= |b|^q$,  $r_1:= \frac r{q}$, $r_1>1$. Then $V\in L^{r_1, n-qr_1}(\Om)$  and
$$
\|  \zeta b u \|_{L_q(\Om)}  \ \le \ c\,   \| V\|_{L^{r_1, n-qr_1}(\Om)}^{1/q}  \|  \nabla (\zeta u) \|_{L_q(\Om)} \ \le \ c\,  \| b\|_{L^{ r, n- r}(\Om)} \,   \|  \nabla (\zeta u) \|_{L_q(\Om)}
$$
On the other hand,  we can interpolate
$$
\gathered
 \|  \zeta^\theta u \|_{L_{q'}(\Om)} \,=\|  (\zeta |u|)^\theta |u|^{1-\theta} \|_{L_{q'}(\Om)}\, \le
\\
\le \, \| (\zeta |u|)^\theta\|_{L^{\frac{2n}{(n-2)\theta}}(\Om)} \||u|^{1-\theta} \|_{L_{\frac{2}{1-\theta}}(\Om)}  \le  \, c \, \|  \nabla (\zeta u) \|_{L_2(\Om)}^{\theta } \|u \|_{L_{2}(\Om)}^{1-\theta }
\endgathered
$$
with some constant $c>0$ depending only on $n$.
So, we obtain
$$
 \int\limits_\Om \zeta^{1+\theta} \, |b|\, |u |^2\, dx \ \le \ c\, \|    u \|_{L_2(\Om)}^{1-\theta }   \|\nabla (\zeta u)\|_{L_2(\Om)}^{\theta }\,
     \| b\|_{L^{ r, n- r}(\Om)} \,   \|  \nabla (\zeta u) \|_{L_q(\Om)}.
$$
Using the H\" older inequality we obtain
$$
 \|  \nabla (\zeta u) \|_{L_q(\Om)} \ \le \  \|  \nabla (\zeta u) \|_{L_2(\Om)} |\Om|^{\frac 1q-\frac 12} \ = \  \|  \nabla (\zeta u) \|_{L_2(\Om)} |\Om|^{\frac \theta {n}}
$$
which completes  the proof.
\end{proof}

The next proposition is the higher integrability estimate  which is the main result of the present section. In this proposition we assume that   $b$ is   smooth and we derive \eqref{Main_Estimate} as an a priori estimate  for a sufficiently regular drift.
\begin{proposition}\label{L_p_estimate}   Assume   $b\in C^\infty(\bar \Om)$  satisfies \eqref{Assumptions_b} and assume $f\in L_q(\Om)$ with $q>n$.  Let
  $u\in \overset{\circ}{W^1_2}(\Om)\cap L_\infty(\Om)$ satisfy \eqref{Equation} in terms of distributions (i.e. the identity \eqref{Identity} holds).
Then   there exists $p>2$ depending only on  $n$,  $q$ , $\Om$  and   $\| b\|_{L^{2, n-2}_w(\Om)}$   such
that   $u\in \overset{\circ}{W^1_p}(\Om)$ and
the  estimate \eqref{Main_Estimate} holds
with some constant $c>0$ depending only on $n$,   $q$, $\| b\|_{L^{2, n-2}_w(\Om)}$ and the Lipschitz constant of $\cd \Om$.
\end{proposition}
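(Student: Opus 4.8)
The plan is to establish the higher-integrability estimate \eqref{Main_Estimate} via a Gehring-type lemma argument built on a reverse Hölder / Caccioppoli inequality for $\nabla u$, where the contribution of the drift is absorbed using the Fefferman–Chiarenza–Frasca inequality (Proposition \ref{Morrey_usual}) in the sharp form recorded in Proposition \ref{Morrey_Compactness}. Since $b\in C^\infty(\bar\Om)$ is assumed here, every manipulation below is legitimate; the point is that all constants depend on $b$ only through $\|b\|_{L^{2,n-2}_w(\Om)}$, which by Proposition \ref{Holder_inequality} controls $\|b\|_{L^{r,n-r}(\Om)}$ for any $r\in[1,2)$.

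First I would fix a small interior ball $B_{2\rho}(x_0)\Subset\Om$ (the boundary balls $\Om_{2\rho}(x_0)$ are handled the same way using the zero boundary condition and the Lipschitz character of $\cd\Om$, which is where the dependence on the Lipschitz constant enters), pick a cutoff $\zeta\in C_0^\infty(B_{2\rho}(x_0))$ with $\zeta\equiv1$ on $B_\rho(x_0)$, $0\le\zeta\le1$, $|\nabla\zeta|\le c/\rho$, and test the identity \eqref{Identity} with $\eta=\zeta^2(u-c_0)$ for a suitable constant $c_0$ (e.g. $c_0=(u)_{B_{2\rho}(x_0)}$ in the interior case, $c_0=0$ at the boundary). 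This is admissible since $u\in\overset{\circ}{W}{^1_2}(\Om)\cap L_\infty(\Om)$ and $\eta\in\overset{\circ}{W}{^1_2}(\Om)\cap L_\infty(\Om)$. The Laplacian term yields the usual Caccioppoli left-hand side $\int\zeta^2|\nabla u|^2$ minus a cross term controlled by $\rho^{-1}\int_{B_{2\rho}}\zeta|\nabla u||u-c_0|$; the right-hand side $\int f\cdot\nabla(\zeta^2(u-c_0))$ is standard. The drift term $\mathcal B[u,\zeta^2(u-c_0)]$ is the delicate one: I would NOT use $\div b\le0$ here (that sign condition is only needed for uniqueness and the $L_\infty$ bound, Propositions \ref{Theorem_2}–\ref{Uniqueness}, already in hand), but instead estimate it directly by $\int\zeta^2|b|(|u-c_0||\nabla u|+\rho^{-1}|u-c_0|^2)$ and apply Proposition \ref{Morrey_Compactness} (in a localized form, with $w:=u-c_0$ and the cutoffs $\zeta,\zeta^2$) to bound $\int_{B_{2\rho}}\zeta^{\,\cdot}|b|\,|w|^2$ by $\|b\|_{L^{r,n-r}}$ times a product of an $L_2$-gradient norm and a sub-$L_2$ norm of $\nabla(\zeta w)$, i.e. an $L^{2n/(n+2)}$ norm. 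The key structural gain is that the weak Morrey assumption gives $\|b\|_{L^{r,n-r}(B_{2\rho}(x_0))}$ small-scale control that is scale-invariant, so after Young's inequality the dangerous term splits into (a) a small fraction of $\int\zeta^2|\nabla u|^2$ that is absorbed on the left, and (b) a term of the form $c\,(\,\fint_{B_{2\rho}}|\nabla u|^{2n/(n+2)}\,)^{(n+2)/n}$ on a larger ball, i.e. precisely the sub-quadratic average that powers Gehring's lemma.

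The outcome of this step is a reverse Hölder inequality of the form
$$
\Big(\fint_{B_\rho(x_0)}|\nabla u|^2\,dx\Big)^{1/2}
\ \le\ C\,\Big(\fint_{B_{2\rho}(x_0)}|\nabla u|^{\,2n/(n+2)}\,dx\Big)^{(n+2)/2n}
\ +\ C\,\Big(\fint_{B_{2\rho}(x_0)}|f|^2\,dx\Big)^{1/2},
$$
valid for all balls (with the obvious boundary modification and $f$, $\nabla u$ extended by zero outside $\Om$), with $C$ depending only on $n$, $\Om$, the Lipschitz constant and $\|b\|_{L^{2,n-2}_w(\Om)}$. Then I would invoke the Gehring–Giaquinta–Modica lemma (in the form of, e.g., \cite[Chapter V]{Giaquinta_ETH}): there exists $\ep>0$, depending only on $n$, $C$ and hence only on the stated quantities, such that $\nabla u\in L^{2+\ep}_{loc}$ with the corresponding quantitative estimate; since $f\in L_q$ with $q>n>2$, the $f$-term is already better than needed, and a covering/boundary argument upgrades this to a global estimate $\|\nabla u\|_{L_p(\Om)}\le c\|f\|_{L_p(\Om)}$ for $p:=\min\{2+\ep,q\}>2$. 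Combined with the Poincaré inequality for $\overset{\circ}{W}{^1_p}(\Om)$ this gives \eqref{Main_Estimate}, and in particular $u\in\overset{\circ}{W}{^1_p}(\Om)$.

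I expect the main obstacle to be the bookkeeping in the drift estimate: one must choose the exponents in Proposition \ref{Morrey_Compactness} (the $r<2$, the auxiliary $q\in(2n/(n+2),r)$, and the interpolation parameter) so that, after Young's inequality, the gradient of $u$ on the left appears only to a power strictly less than $2$ relative to the absorbed term while the leftover term is genuinely a sub-quadratic average suitable for Gehring — and simultaneously ensuring the smallness needed for absorption comes purely from shrinking the ball (via the localized weak Morrey norm) rather than from smallness of $\|b\|$ itself, which we do not assume. The boundary version, requiring a flattening of $\cd\Om$ and the use of $u\in\overset{\circ}{W}{^1_p}$ to extend by zero, is technically the most laborious part but introduces no new ideas.
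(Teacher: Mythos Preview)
There is a genuine gap in your treatment of the drift term, and it stems from your explicit decision \emph{not} to use the sign condition $\div b\le 0$. The paper's proof uses it in an essential way: testing with $\eta=\zeta^4\bar u$ (note the fourth power of the cutoff) one has the algebraic identity
\[
\mathcal B[u,\zeta^4\bar u]\ =\ \mathcal B[\zeta^2\bar u,\zeta^2\bar u]\ -\ 2\int\zeta^3|\bar u|^2\,b\cdot\nabla\zeta\,dx,
\]
and Proposition~\ref{Q_Form_good_sign} (which encodes $\div b\le 0$) lets one drop the first term on the right as non-negative. Only the second term---already of the shape $\int |b|\,|\bar u|^2\cdot(\text{cutoff gradient})$---remains, and it is precisely to this that estimate~\eqref{Morrey_higher_integrability} applies, producing the sub-quadratic factor $\|\nabla(\zeta\bar u)\|_{L_{2n/(n+2)}}$ that feeds Gehring's lemma.

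Your direct bound $|\mathcal B[u,\zeta^2 w]|\le\int\zeta^2|b|\,|w|\,|\nabla u|$ contains the product $|w|\,|\nabla u|$, not $|w|^2$, so Proposition~\ref{Morrey_Compactness} does not apply to it. Any attempt to split via Young's or H\"older's inequality leaves either a term $\int|b|\,|\nabla u|^2$ (not absorbable into $\int|\nabla u|^2$ with a constant depending only on $\|b\|_{L^{2,n-2}_w}$) or a term $\int|b|^2|w|^2$ (and $|b|^2$ lies in no usable Morrey class, since from \eqref{Assumptions_Morrey_space} one only controls $b\in L^{r,n-r}$ for $r<2$). Your suggested rescue, that ``smallness needed for absorption comes purely from shrinking the ball'', is also incorrect: the $L^{2,n-2}_w$ norm is scale-invariant (cf.~\eqref{Scaling}), so $\|b\|_{L^{2,n-2}_w(B_\rho(x_0))}$ does \emph{not} tend to zero as $\rho\to 0$, in contrast with the regular $L_n$ case. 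Finally, even after Gehring one obtains only $\|\nabla u\|_{L_p}\le c(\|\nabla u\|_{L_2}+\|f\|_{L_p})$; closing \eqref{Main_Estimate} requires the energy bound of Proposition~\ref{L_2_estimate}, which itself rests on $\div b\le 0$.
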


\begin{proof} Let us fix some
 $r\in (\frac{2n}{n+2}, 2)$.
   First we consider  an internal point $x_0\in \Om$. Take
  $R>0$ such that $B_{2R}(x_0)\subset \Om$ and choose a cut-off
function $\zeta\in C_0^\infty(B_{2R}(x_0))$ so  that $\zeta\equiv 1$
on $B_R(x_0)$ and $|\nabla\zeta |\le c/R$.
Denote $\bar u = u-(u)_{B_{2R}(x_0)}$. Then the function  $\eta= \zeta^4\bar u$ is admissible for   \eqref{Identity}. It is easy to see that
  $$
\mathcal B [u,\zeta^4\bar u] \ = \    \mathcal B [\zeta^2 \bar
u,\zeta^2 \bar u]  \ - \  2\, \int\limits_{B_{2R}(x_0)} \zeta^3
|\bar u |^2 \, b  \cdot\nabla \zeta \, dx.
$$
  Taking into account   \eqref{Quadratic_Form_Non-negative} from \eqref{Identity} we obtain
$$
  \| \zeta^2 \nabla   u\|_{L_2(B_{2R}(x_0))}^2 \ \le \ c\, \Big( \| \bar u \nabla \zeta  \|_{L_2(B_{2R}(x_0))}^2 + \| f \|_{L_2(B_{2R}(x_0))}^2\Big)   \ + \ c \, \int\limits_{B_{2R}(x_0)} \zeta^3
|\bar u |^2 \,  b  \cdot \nabla \zeta~dx.
$$
We estimate the last term with the help of \eqref{Morrey_higher_integrability}:
$$
\gathered
\int\limits_{B_{2R}(x_0)} \zeta^3
|\bar u |^2 \,  b  \cdot \nabla \zeta~dx \ \le \\ \le \   \|\nabla \zeta\|_{L_\infty(B_{2R}(x_0))} \, \| b\|_{L^{r,n-r}(\Om)}\,  \|\nabla (\zeta^2 \bar u) \|_{L_2(B_{2R}(x_0))} \,  \| \nabla (\zeta\bar u) \|_{L_{\frac{2n}{n+2}}(B_{2R}(x_0))}.
\endgathered
$$
Using the Young inequality we get
\begin{equation}
\gathered
  \| \zeta^2 \nabla   u\|_{L_2(B_{2R}(x_0))}^2 \ \le \ c\, \Big( \| \bar u \nabla \zeta  \|_{L_2(B_{2R}(x_0))}^2 + \| f \|_{L_2(B_{2R}(x_0))}^2\Big)   \ + \\ + \
c\, \|\nabla \zeta\|_{L_\infty(B_{2R}(x_0))}^2 \,   \| b\|_{L^{r,n-r}(\Om)}^2 \,
 \|  \nabla (\zeta \bar u) \|_{L_{\frac{2n}{n+2}}(B_{2R}(x_0))}^2.
 \endgathered
\label{Inequality_for_u_bar}
\end{equation}
Taking into account $\zeta\equiv 1$ on $B_R(x_0)$ and  $\|\nabla \zeta\|_{L_\infty(B_{2R}(x_0))} \le \frac cR$ and the inequalities
\begin{equation}
  \| \bar u    \|_{L_2(B_{2R}(x_0))} \, \le \, c\,   \|  \nabla   u  \|_{L_{\frac{2n}{n+2}}(B_{2R}(x_0))}, \label{Poincare}
\end{equation}
$$
 \|  \nabla (\zeta \bar u) \|_{L_{\frac{2n}{n+2}}(B_{2R}(x_0))} \, \le \, c\,  \|  \nabla   u  \|_{L_{\frac{2n}{n+2}}(B_{2R}(x_0))},
$$
we arrive at
 $$
  \|   \nabla   u\|_{L_2(B_{ R}(x_0))}  \ \le \     c\, \| f \|_{L_2(B_{2R}(x_0))}    \ + \
\frac c{R} \, \Big( 1+  \| b\|_{L^{r, n-r}(\Om)} \Big) \,
 \|   \nabla  u \|_{L_{\frac{2n}{n+2}}(B_{2R}(x_0))}.
$$
Here $c$ is a global constant that depends only on $r$. Next we derive similar estimates near the boundary.
Extend $u$ and $f$  by zero outside $\Om$ and denote this extension
by $\tilde u$ and $\tilde f$. Note that as $\cd \Om$ is
Lipschitz there exist $R_0>0$ and $\dl_0>0$ depending on $\Om$ such that
\begin{equation}\label{Oblast'}
\forall\, R<R_0, \qquad \forall\, x_0\in \cd \Om \qquad |B_R(x_0) \setminus \Om | \, \ge \,   \dl_0 |B_R|.
\end{equation}
  Take $x_0\in \cd \Om$,
$R<R_0/2$ and denote $\Om_R(x_0):= B_R(x_0)\cap \Om$. Choose a cut-off function $\zeta\in
C_0^\infty(B_{2R}(x_0))$ so that $\zeta\equiv 1$ on $B_R(x_0)$. Then the function
$$
\eta \ := \ \zeta^4 u \ \in \   L_\infty(\Om_{2R}(x_0))\cap  \overset{\circ}{W}{^1_2}(\Om_{2R}(x_0))$$  is admissible for the identity
   \eqref{Identity}.
Proceeding similar to the internal case we arrive at the inequality \eqref{Inequality_for_u_bar} with $\bar u$ replaces by $\tilde u$. Then instead of \eqref{Poincare} we use the inequality
$$
 \| \tilde  u    \|_{L_2(B_{2R}(x_0))} \, \le \, c\,   \|  \nabla   \tilde u  \|_{L_{\frac{2n}{n+2}}(B_{2R}(x_0))},
$$
which holds with some constant $c>0$ depending only on $n$ and $\dl_0$ in \eqref{Oblast'} (and independent on $x_0$ and $R$) as $\tilde u$ satisfies $|\{ \, x\in B_{2R}(x_0): \, \tilde u (x)=0\, \}|\ge \dl_0|B_{2R}|$. Then
   after
routine computations     we obtain for any $x_0\in \cd \Om$ and $R<R_0$
 $$
  \|   \nabla   u\|_{L_2(\Om_{ R}(x_0))}  \ \le \     c\, \| f \|_{L_2(\Om_{2R}(x_0))}    \ + \
\frac c{R} \, \Big( 1+  \| b\|_{L^{r, n-r}(\Om)} \Big) \,
 \|   \nabla  u \|_{L_{\frac{2n}{n+2}}(\Om_{2R}(x_0))}.
$$
Combining internal and boundary estimates in the standard way and dividing the result by $R^{n/2}$ for any $x_0\in \bar \Om$ and $R<R_0$ we obtain the reverse H\" older  inequality for $\nabla u$:
$$
\gathered \left(  \ \ \  \  -  \!\!\!\!\!\!\!\!\!\!\!
\int\limits_{\Om_R(x_0)}|\nabla u|^2~dx\right)^{\frac 12} \ \le \
 c_1\,
\left(  \ \ \  \  -  \!\!\!\!\!\!\!\!\!\!\!\!
\int\limits_{\Om_{2R}(x_0)}|f|^2~dx\right)^{\frac  12} \ +  \\ +  \
c_2\, \Big(1+ \|b  \|_{L^{r, n-r}(\Om)}  \Big) \left( \ \ \  \ -
 \!\!\!\!\!\!\!\!\!\!\!\! \int\limits_{\Om_{2R}(x_0)}|\nabla
u|^{\frac{2n}{n+2}}~dx\right)^{\frac  {n+2}{2n}}.
\endgathered
$$
From Proposition \ref{Holder_inequality} we conclude
$$
\|b  \|_{L^{r, n-r}(\Om)} \, \le \, c\, \|b  \|_{L^{2, n-2}_w(\Om)}.
$$
Now the higher integrability of $u$ follows via Gehring's lemma,  see, for example, \cite[Chapter V]{Giaquinta}. Namely,
there exists $p>2$  and $c_*>0$ depending only on $c_1$, $c_2$ and  $\|b  \|_{L^{2, n-2}_w(\Om)}$  such that   $\nabla u\in L_p(\Om)$ and the
following estimate holds:
$$
\| \nabla u\|_{L_p(\Om)} \ \le \ c_*~(\| \nabla u\|_{L_2(\Om)} + \|
f\|_{L_p(\Om)}).
$$
Combining this estimate with Proposition \ref{L_2_estimate} we obtain \eqref{Main_Estimate}.
\end{proof}

Finally we can prove Theorem \ref{Theorem_1}.

\begin{proof} We follow the method similar to \cite[Theorem 2.1]{Kwon_1},  and \cite{Verchota_1},\cite{Verchota_2}. Let us fix some $q>n$. Let  $p>2$  be the  exponent defined in Proposition \ref{L_p_estimate}  and denote $p'=\frac p{p-1}$.
As $\Om$ is Lipschitz we can find  the sequence of $C^2$-smooth domains
$\{ \Om_k\}_{k=1}^\infty$   such that $$\Om_{k+1}\Subset \Om_k, \qquad \bigcup\limits_k \Om_k = \Om.$$
Moreover,  it is possible to  construct   domains $\Om_k$ so that  the Lipschitz constants of $\cd \Om_k$ are controlled uniformly by the Lipschitz constant of $\cd \Om$. In particular, we can assume there are exist positive constants $\hat \dl_0$ and $\hat R_0$ independent on $k\in \Bbb N$ such that
\begin{equation}\label{Oblast'_k}
\forall\, k\in \Bbb N, \qquad  \forall\, R<\hat R_0, \qquad \forall\, x_0\in \cd \Om_k \qquad |B_R(x_0) \setminus \Om_k | \, \ge \,   \hat  \dl_0 \, |B_R|.
\end{equation}
For a canonical domain given as a subgraph of a Lipschitz function existence of such approximation can be obtained by mollification and shift of the graph, and for general bounded Lipschitz domain  the standard localization works.

Now we take a sequence of positive numbers $\ep_k\to 0$ such that   $\ep_k < \operatorname{dist}\{ \bar \Om_k, \cd \Om\}$, and define the mollification of the drift $b$:
  $$
  b_k (x )  \, := \, \int\limits_{\Om} \om_{\ep_k}(x-y)b(y)\, dy, \qquad x\in \Om,
$$
where $\om_\ep(x):=\ep^{-n}\om(x/\ep)$ and $\om\in C_0^\infty(\Bbb R^n)$ is the standard Sobolev kernel, i.e.
\begin{equation}
\om \, \ge \, 0, \qquad \supp \om \in \bar B, \qquad \int\limits_{\Bbb R^n}\om(x)\, dx \, = \, 1, \qquad \om(x)=\om_0(|x|).
\label{Sobolev_kernel}
\end{equation}
Then $b_k \in C^\infty(\bar \Om)$ and as $\ep_k<\operatorname{dist}\{ \bar \Om_k, \cd \Om\}$ from  \eqref{Assumptions_b}  for any $k $ we obtain
\begin{equation}\label{Assumptions_b_k}
\div  b_k \, \le \, 0 \qquad\mbox{in} \quad \Om_k.
\end{equation}
Moreover, there is a constant $c>0$ independent on $k$ such that
\begin{equation}\label{Drift_approximation1}
\| b_k\|_{L^{2,n-2}_w(\Om_k)} \, \le \, c\, \|b\|_{L^{2,n-2}_w(\Om)}, \qquad \| b_k -b\|_{L_{p'}(\Om)} \to 0 \quad \mbox{as} \quad k \to 0.
\end{equation}
For the reader's convenience
we prove the inequality \eqref{Drift_approximation1} in Appendix.

For  $f\in L_p(\Om)$ we can find    $f_k\in C^\infty_0(\Om_k)$ such that $\|f_k-f\|_{L_p(\Om)}\to 0$.
  Let $u_k\in W^1_2(\Om)\cap L_\infty(\Om)$ be  smooth  solutions to the regularized problem
  \begin{equation}\label{Equation_Approx}
\left\{ \quad \gathered     -\Delta u_k +  b_k \cdot \nabla u_k \ = \ -\div f_k \qquad\mbox{in}\quad \Om_k, \\
u_k|_{\cd \Om_k} \ = \ 0,
\endgathered\right.
\end{equation}
and extend functions $u_k$ by zero from $\Om_k$ onto $\Om$. As the drift $b_k$ of in \eqref{Equation_Approx} satisfies the properties \eqref{Assumptions_b_k} and \eqref{Drift_approximation1}
from Proposition \ref{L_p_estimate} we obtain the estimate
$$
\| u_k \|_{W^1_p(\Om)} \ \le \ c\, \| f_k\|_{L_p(\Om)}
$$
with a constant  $c>0$ depending only on $n$, $q$,  $\| b\|_{L^{2, n-2}_w(\Om)}$ and the constant $\hat \dl_0$ in \eqref{Oblast'_k} which is independent on $k$.  Hence we can take a subsequence $u_k$ such that
$$
u_k \rightharpoonup u \quad \mbox{in}\quad W^1_p(\Om).
$$
Take any $\eta\in C_0^\infty(\Om)$, due to our construction of the sequence $\Omega_k$ we can find $N$ such that for all $k\ge N,~\supp \eta\subset\Om_k$, therefore $\eta$ is a suitable test function in \eqref{Equation_Approx}. As $b_k \to b$ in $L_{p'}(\Om)$ we can pass to the limit in the  identity
$$
\gathered  \int\limits_{\Om}   \nabla u_k \cdot\Big(
\nabla\eta + b_k \eta \Big) \, dx   \ = \
\int\limits_{\Om} f_k \cdot \nabla \eta~dx   ,
\endgathered
\label{Identity_eps}
$$
and obtain \eqref{Identity}. Hence $u \in \overset{\circ}{W}{^1_p}(\Om)$ is a $p$-weak solution to the problem \eqref{Equation}. Its uniqueness follows from Proposition \ref{Uniqueness}.
\end{proof}

\newpage
\section{Modified De Giorgi classes}\label{DG_section}
\setcounter{equation}{0}

\bigskip
\medskip
In this section we introduce the modified De Giorgi classes which are convenient for the study of solutions to the elliptic equations with coefficients from Morrey spaces.  The only difference of our classes from  the standard De Giorgi classes (see, for example  \cite[\S 8.3]{Giaquinta_ETH}, \cite[\S II.6]{LU}) is the additional multiplier $1+ \frac{R^\al}{(R-\rho)^\al}$ in the inequalities \eqref{DG} and \eqref{DG_boundary}.
\begin{definition}\label{Def_DG}
  For $u\in W^1_2(\Omega)$ we define $A_k:= \{\, x\in \Om: \, u(x)>k\, \}$. We say $u\in DG(\Om, k_0)$ if there exist  constants $\ga>0$, $F>0$, $\al\ge 0$, $q>n$ such that for any $B_R(x_0)\subset \Om$, any $0<\rho <R $ and any $k\ge k_0$ the following inequality holds
\begin{equation}
\gathered
  \int\limits_{A_k\cap B_\rho(x_0)} |\nabla u|^2\, dx \ \le \ \frac{\ga^2}{(R-\rho)^2}\left( 1+  \frac{R^\al}{(R-\rho)^\al}\right) \, \int\limits_{A_k\cap B_R(x_0)}|u-k|^2\,dx \ + \\ + \  F^2 \, |A_k\cap B_R(x_0)|^{1-\frac 2q} \label{DG}.
\endgathered
\end{equation}
   We write $u\in DG(\Om)$ if  $u\in DG(\Om, k_0)$ for any $k_0\in \Bbb R $. To avoid overloaded notation,  when  we need to specify constants in Definition \ref{Def_DG} we allow some terminological license and say that  the class $D(\Om,k_0)$ corresponds to the constants $\ga$, $F$, $\al$, $q$  instead of including these constants  in the notation of the functional class.
\end{definition}

The second modified De Giorgi class characterizes the behavior of  Sobolev functions  near the boundary of the domain:

\begin{definition}\label{Def_DG_boundary}
  We say $u\in DG(\cd\Om)$ if $u\in   \overset{\circ}{W}{^1_2}(\Om)$ and there exist  constants $\ga>0$, $F>0$, $\al\ge 0$, $q>n$, $R_0>0$ such that for any $x_0\in \cd\Om$, any $0<\rho <R \le R_0$ and any $k\ge 0$ the following inequality holds
\begin{equation}
\gathered
  \int\limits_{A_k\cap \Om_\rho(x_0)} |\nabla u|^2\, dx \ \le \ \frac{\ga^2}{(R-\rho)^2}\left( 1+  \frac{R^\al}{(R-\rho)^\al}\right) \, \int\limits_{A_k\cap \Om_R(x_0)}|u-k|^2\,dx \ + \\ + \ F^2 \, |A_k\cap \Om_R(x_0)|^{1-\frac 2q} \label{DG_boundary}
\endgathered
\end{equation}
  where we denote  $\Om_R(x_0):=\Om\cap B_R(x_0)$.
\end{definition}

 We emphasize that while $u\in DG(\Om)$ implies that \eqref{DG} holds for any $k\in \Bbb R$ the inclusion $u\in DG(\cd \Om)$ implies that \eqref{DG_boundary}  holds only for non-negative $k\in \Bbb R$. Such choice of the boundary De Giorgi class  is motivated by the homogeneous Dirichlet boundary conditions in \eqref{Equation}.

\medskip
The main result of this section is the following theorem:

\begin{theorem}\label{DG_Theorem} Assume $\Om \subset \Bbb R^n$ is a bounded Lipschitz  domain and denote by $DG(\Om)$ and $DG(\cd \Om)$ the modified De Giorgi classes corresponding to some positive  constants $F$, $\ga$, $\al$ and  $q>n$.
\begin{itemize}
  \item[{\rm (i)}] Assume  $\pm u \in DG(\Om) $. Then there exists $\mu\in (0,1)$   depending only on $n$, $q$, $\ga$  such that $u\in C^\mu_{loc}(  \Om)$  and for any $\Om'\Subset \Om$
  $$
  \| u\|_{C^\mu(\bar \Om')} \ \le \ c(\Om, \Om')\, \Big( \| u\|_{L_2(\Om)} +  F\Big)
  $$
  with some constant $c(\Om, \Om')>0$ depending only on $\Om$, $\Om'$, $n$, $q$, $\ga$ and $\al$.
  \item[{\rm (ii)}] Assume   $\pm u \in DG(\Om)\cap DG(\cd \Om)$. Then there exists $\mu\in (0,1)$   depending only on $n$, $q$, $\ga$  such that $u\in C^\mu(  \bar \Om)$ and
   $$
  \| u\|_{C^\mu(\bar \Om)} \ \le \ c(\Om)\, \Big( \| u\|_{L_2(\Om)} +  F\Big)
  $$
  with some constant $c(\Om)>0$ depending only on $\Om$, $R_0$,  $n$, $q$, $\ga$ and $\al$.
\end{itemize}

\end{theorem}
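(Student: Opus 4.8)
The plan is to run the classical De Giorgi iteration, adapted to the modified classes; the extra factor $1+R^\al/(R-\rho)^\al$ never spoils it, since in every iteration with radii $\rho_j\uparrow R$ it grows only geometrically in $j$ — and geometric factors are absorbed by the standard iteration lemmas (\cite[Ch.~II, Lemma~4.7]{LU}, \cite[\S8.3]{Giaquinta_ETH}) — while wherever we fix a ratio such as $\rho=R/2$ it is merely the constant $1+2^\al$. We treat (i) first. \emph{Step 1 (local boundedness).} Fix $B_R(x_0)\Subset\Om$, $k_0\in\BR$, and put $k_j=k_0+d(1-2^{-j})$, $\rho_j=\frac R2(1+2^{-j})$. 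Combining the Sobolev inequality on $B_{\rho_{j+1}}(x_0)$ with a cut-off, Chebyshev's inequality at the levels $k_j$, and \eqref{DG} (outer radius $\rho_j$, inner $\rho_{j+1}$, multiplier $\le C2^{\al j}$) one obtains for $y_j:=R^{-n}\int_{A_{k_j}\cap B_{\rho_j}(x_0)}(u-k_j)^2\,dx$ a recursion $y_{j+1}\le C\be^{\,j}d^{-2\si}y_j^{1+\si}$ with $\be,\si>0$ depending on $n,q,\ga,\al$. Because $q>n$, the forcing term of \eqref{DG} scales with the positive exponent $\dl:=1-\tfrac nq$ (indeed $FR^{\dl}$ carries the units of $u$); choosing $d=\Lambda\bigl(R^{-n/2}\|(u-k_0)_+\|_{L_2(B_R(x_0))}+FR^{\dl}\bigr)$ with $\Lambda$ large makes $y_j\to0$, so $\sup_{B_{R/2}(x_0)}(u-k_0)_+\le d$. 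Applied to $\pm u$ this gives the interior bound $\sup_{B_{R/2}(x_0)}|u|\le C\bigl(R^{-n/2}\|u\|_{L_2(B_R(x_0))}+FR^{\dl}\bigr)$.

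\emph{Step 2 (oscillation decay and interior H\"older continuity).} Fix $B_{2R}(x_0)\Subset\Om$ and set $M=\sup_{B_R(x_0)}u$, $m=\inf_{B_R(x_0)}u$, $\om=M-m$. One of the two sets $\{u>\tfrac{M+m}2\}$, $\{u<\tfrac{M+m}2\}$ occupies at most half of $B_R(x_0)$; say the first does (the other case is symmetric, using $-u\in DG(\Om)$). Inserting \eqref{DG} at $\rho=R/2$ into the De Giorgi isoperimetric inequality over the levels $\ell_j=M-2^{-j-1}\om$ shrinks $R^{-n}|\{u>\ell_\nu\}\cap B_{R/2}(x_0)|$ below a threshold $\ep_0=\ep_0(n,q,\ga,\al)$ for a suitable $\nu=\nu(n,q,\ga,\al)$, after which Step~1 on $B_{R/2}(x_0)$ with $k_0=\ell_\nu$ forces $\sup_{B_{R/4}(x_0)}u\le M-\tau\om+CFR^{\dl}$ with $\tau=\tau(n,q,\ga,\al)\in(0,1)$; hence $\operatorname{osc}_{B_{R/4}(x_0)}u\le(1-\tau)\operatorname{osc}_{B_R(x_0)}u+CFR^{\dl}$. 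The standard iteration lemma for such inequalities (\cite[Ch.~III, Lemma~2.1]{Giaquinta}) then yields $\mu\in(0,\dl]$ — depending only on $n,q,\ga$ (and, through $\tau$, a priori also on $\al$) — with $\operatorname{osc}_{B_\rho(x)}u\le C(\rho/R)^\mu\bigl(\operatorname{osc}_{B_R(x)}u+FR^{\dl}\bigr)$ for all $B_R(x)\Subset\Om$. Together with Step~1 this is the Campanato characterisation of $C^\mu$; covering $\Om'\Subset\Om$ by balls of radius comparable to $\dist(\Om',\cd\Om)$ gives part~(i).

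\emph{Step 3 (up to the boundary, part (ii)).} Extend $u$ by $0$ to $\tilde u$ on $\BR^n$; since $u\in\WWW^1_2(\Om)$ one has $\nabla\tilde u=\chi_\Om\nabla u$, and \eqref{Oblast'} gives $|\{\tilde u=0\}\cap B_R(x_0)|\ge\dl_0|B_R(x_0)|$ for $x_0\in\cd\Om$, $R\le R_0$. Hence for every $k\ge0$ the set $\{\tilde u>k\}\cap B_R(x_0)$ occupies at most a fraction $1-\dl_0$ of $B_R(x_0)$, so the ``good alternative'' needed to start Step~2 holds \emph{automatically} at boundary points; running the same isoperimetric iteration with \eqref{DG_boundary} in place of \eqref{DG} gives $\sup_{B_{R/4}(x_0)}\tilde u\le(1-\tau_0)\sup_{B_R(x_0)}\tilde u+CFR^{\dl}$ and, via $-u\in DG(\cd\Om)$, the symmetric lower bound, so $\operatorname{osc}_{B_{R/4}(x_0)}\tilde u\le(1-\tau_0)\operatorname{osc}_{B_R(x_0)}\tilde u+CFR^{\dl}$ for $x_0\in\cd\Om$, $R\le R_0$. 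Iterating as in Step~2 yields the boundary decay, and a global bound $\|u\|_{L_\infty(\Om)}\le C(\|u\|_{L_2(\Om)}+F)$ follows by covering $\Om$ by interior balls (Step~1) and boundary balls (Step~3). Finally one glues the interior and boundary oscillation estimates in the usual manner — for $x,y\in\bar\Om$ one distinguishes whether $|x-y|$ is small or large relative to $\dist(x,\cd\Om)$ and $\dist(y,\cd\Om)$, cf.\ \cite[Ch.~III]{LU} — to obtain $u\in C^\mu(\bar\Om)$ together with the asserted estimate.

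The only genuinely new bookkeeping — and the step that needs care — is verifying in Steps~1 and~2 that the multiplier $1+R^\al/(R-\rho)^\al$ enters the iterations merely as a geometric factor $C2^{\al j}$ (which the iteration lemmas tolerate for any fixed base) and, at fixed radius ratios, as the finite constant $1+2^\al$, so that the smallness requirements on $d$, $\ep_0$, $\nu$ remain attainable and the decay ratio $1-\tau$, hence the H\"older exponent $\mu$, stays bounded away from $1$ (respectively, away from $0$), with the dependence claimed in the statement. The rest is verbatim the classical De Giorgi scheme, which is why this section may be skipped by a reader familiar with that technique.
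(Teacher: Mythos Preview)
Your proposal is correct and follows essentially the same route as the paper: a dyadic De Giorgi iteration for local boundedness (the paper's Lemma~4.2), the ``thin set'' lemma plus the De Giorgi isoperimetric argument to shrink the super-level set (Lemmas~4.3--4.4), oscillation decay and its iteration (Lemmas~4.5--4.6), and at the boundary the same scheme applied to the zero extension of $u$, using the Lipschitz condition on $\cd\Om$ to guarantee the favourable alternative automatically (Lemma~4.7). Your remark that the factor $1+R^\al/(R-\rho)^\al$ enters only as a geometric $C2^{\al j}$ in the iterations and as $1+2^\al$ at fixed ratios is exactly the mechanism the paper uses; you are also right to flag that $\mu$ picks up an $\al$-dependence through $\tau$, which the paper's own proof (via $s=s(\dl,\theta,n,\ga,\al)$ and $\si=1-2^{-(s+2)}$) confirms even though the statement claims a cleaner dependence.
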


Our  modified De Giorgi classes  in Definitions \ref{DG} and \ref{DG_boundary} are very similar to ones introduced earlier in \cite{LU}.
Our proof of Theorem \ref{DG_Theorem} is  standard and follows to the general line in \cite{LU}. But due to the minor difference in the proof and for the reader convenience we present the proof of Theorem \ref{DG_Theorem} in this section.

\medskip

First we present some technical lemma.
\begin{lemma}\label{recurrsion}
Assume  $\{Y_m\}_{m=1}^\infty\subset \Bbb R$ are nonnegative and there exist $\ep>0$, $\ep_1\ge 0$,
$b\ge 1$ and $c_0\ge 1 $ such that the following inequality holds
\begin{equation}\label{Fix_constant}
Y_{m+1} \ \le \ c_0 ~ b^m~
Y_m^{1+\ep} (1+ Y_m^{\ep_1}), \qquad \forall~m\in \Bbb N.
\end{equation}
Assume $Y_0  \le  \theta$ where
 $\theta \ := \ (2c_0)^{-1/\ep} ~b^{-1/\ep^2} $. Then
$$
 Y_m \ \le \ \theta
~b^{-m/\ep}, \qquad \forall~m\in \Bbb N.
$$
\end{lemma}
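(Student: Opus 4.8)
The plan is to establish the bound $Y_m \le \theta\, b^{-m/\ep}$ by induction on $m$, exploiting the fact that the auxiliary factor $(1+Y_m^{\ep_1})$ is harmless once we know $Y_m$ is small. The base case $m=0$ is exactly the hypothesis $Y_0\le\theta$. For the inductive step, I would assume $Y_m\le\theta\, b^{-m/\ep}$ and feed this into \eqref{Fix_constant}.

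First I would dispose of the extra factor. Since $\theta = (2c_0)^{-1/\ep}\, b^{-1/\ep^2}$ and $c_0\ge1$, $b\ge1$, we have $\theta\le 1$, so from the induction hypothesis $Y_m\le\theta\, b^{-m/\ep}\le 1$, whence $1+Y_m^{\ep_1}\le 2$. Thus \eqref{Fix_constant} gives
$$
Y_{m+1} \ \le \ 2c_0\, b^m\, Y_m^{1+\ep}.
$$
Now I would substitute the induction hypothesis into the right-hand side:
$$
Y_{m+1} \ \le \ 2c_0\, b^m\, \big(\theta\, b^{-m/\ep}\big)^{1+\ep} \ = \ 2c_0\, \theta^{1+\ep}\, b^{m}\, b^{-m(1+\ep)/\ep} \ = \ 2c_0\, \theta^{1+\ep}\, b^{-m/\ep}.
$$
It remains to check that $2c_0\, \theta^{1+\ep}\, b^{-m/\ep} \le \theta\, b^{-(m+1)/\ep}$, i.e. that $2c_0\, \theta^{\ep} \le b^{-1/\ep}$. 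But by the definition of $\theta$, $\theta^\ep = (2c_0)^{-1}\, b^{-1/\ep}$, so $2c_0\, \theta^\ep = b^{-1/\ep}$, and the required inequality holds with equality. This closes the induction.

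There is essentially no obstacle here — this is the standard fast-geometric-convergence lemma (a variant of \cite[Chapter II, Lemma 5.6]{LU}), and the only point requiring a little care is verifying that the constant $\theta$ has been chosen precisely so that the recursion reproduces itself; the computation above shows the choice is sharp. One should also note at the outset that $\ep_1\ge0$ and $c_0,b\ge1$ guarantee $\theta\le1$, which is what makes the truncation $1+Y_m^{\ep_1}\le2$ legitimate throughout the induction.
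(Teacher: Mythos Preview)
Your proof is correct and follows essentially the same induction argument as the paper's own proof, with the same key computation showing that the choice of $\theta$ makes $2c_0\,\theta^{1+\ep}\,b^{1/\ep}=\theta$. The only (minor, purely expository) difference is that you make explicit the observation $\theta\le 1\Rightarrow Y_m\le 1\Rightarrow 1+Y_m^{\ep_1}\le 2$, which the paper uses implicitly.
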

\begin{proof} We use the induction with respect to $m$. For $m=0$ the statement holds by assumption. Now assuming $ Y_m \ \le \ \theta
~b^{-m/\ep}$ from \eqref{Fix_constant} we estimate
$$
\gathered
Y_{m+1} \ \le \ c_0 ~ b^m~
Y_m^{1+\ep} (1+ Y_m^{\ep_1}) \ \le \  c_0 ~ b^m~ (\theta
~b^{-m/\ep})^{1+\ep} \Big( 1+ (\theta
~b^{-m/\ep})^{\ep_1} \Big) \ \le  \\ \le \ 2\, c_0 \, \theta^{1+\ep} \, b^{-\frac m\ep} \ = \ 2\, c_0 \, \theta^{1+\ep} \, b^{1/\ep}\, b^{-\frac {m+1}\ep}
\endgathered
$$
To finish the proof we note that $\theta    =   (2c_0)^{-1/\ep} ~b^{-1/\ep^2}$ implies \,  $ 2  c_0 \, \theta^{1+\ep} \, b^{1/\ep} = \theta$. \
\end{proof}

Now we prove the local maximum estimate:
\begin{lemma}\label{Boundedness_Holder}
Assume  $u\in DG(B_R, k_0)$. Then $u$ is essentially bounded from above in $B_{R/2}$ and
\begin{equation}\label{Boundedness_Estimate_Holder}
\sup\limits_{B_{R/2} } ~(u-k_0)_+ \ \le \ c_*  \,  \left[\Big(\
-\!\!\!\!\!\!\!\int\limits_{B_R } |(u-k_0)_+|^2~dx \Big)^{1/2} + F\, R^{1-\frac nq}\right]
\end{equation}
where $(u-k_0)_+:=\max\{ u-k_0,0\}$ and  $c_*>0$ depends only on $n$, $\ga$ and $\al$.
\end{lemma}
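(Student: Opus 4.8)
The plan is to run the classical De Giorgi iteration on dyadic balls and levels, using the modified inequality \eqref{DG} from Definition \ref{Def_DG}; the only adjustment compared to the textbook argument is keeping track of the extra factor $1+R^\al/(R-\rho)^\al$ and of the $F$-term. Since the statement is local at the centre of $B_R$, assume $B_R=B_R(0)$ and, for a parameter $d>0$ to be fixed at the end, set $R_m:=\tfrac R2+\tfrac R{2^{m+1}}$ and $k_m:=k_0+d\,(1-2^{-m})$ for $m\ge0$, so that $R_0=R$, $R_m\downarrow R/2$, $k_0=k_0$, $k_m\uparrow k_0+d$, $R_m-R_{m+1}=R\,2^{-m-2}$ and $k_{m+1}-k_m=d\,2^{-m-1}$. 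Write $A(k,\rho):=A_k\cap B_\rho$, $a_m:=\int_{A(k_m,R_m)}(u-k_m)_+^2\,dx$ and $Y_m:=a_m/(d^2R^n)$. The goal is to show $Y_m\to0$: since $k_m\to k_0+d$ and $R_m\to R/2$, elementary monotonicity then forces $\int_{A(k_0+d,\,R/2)}(u-(k_0+d))_+^2\,dx=0$, i.e. $u\le k_0+d$ a.e. in $B_{R/2}$, which is exactly the asserted bound with $d$ on the right.

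To produce the recursion, I would choose $\zeta_m\in C_0^\infty(B_{R_m})$ with $\zeta_m\equiv1$ on $B_{R_{m+1}}$ and $|\nabla\zeta_m|\le c\,2^m/R$, apply the Sobolev inequality $\|w\|_{L_{2n/(n-2)}}^2\le c(n)\|\nabla w\|_{L_2}^2$ to $w=(u-k_{m+1})_+\zeta_m\in\overset{\circ}{W}{^1_2}(B_{R_m})$, and bound $\int\zeta_m^2|\nabla(u-k_{m+1})_+|^2$ via \eqref{DG} on the radii $(\rho,R)=(R_{m+1},R_m)$, noting $R_m^\al/(R_m-R_{m+1})^\al\le 2^{(m+2)\al}$ so the bracket in \eqref{DG} is $\le c\,2^{m(2+\al)}R^{-2}$. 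Combining this with H\"older's inequality $a_{m+1}\le|A(k_{m+1},R_{m+1})|^{2/n}\big(\int((u-k_{m+1})_+\zeta_m)^{2n/(n-2)}\big)^{(n-2)/n}$, the Chebyshev bound $|A(k_{m+1},R_m)|\le(k_{m+1}-k_m)^{-2}a_m=4^{m+1}d^{-2}a_m$, and the inclusions $(u-k_{m+1})_+\le(u-k_m)_+$, $A_{k_{m+1}}\subset A_{k_m}$, one collects powers of $R$ and $d$ to get
$$
Y_{m+1}\ \le\ c(n,\ga,\al)\Big[\,b_1^{\,m}\,Y_m^{1+\frac2n}\ +\ \Big(\tfrac{F\,R^{1-n/q}}{d}\Big)^{2}b_1^{\,m}\,Y_m^{\,1+\frac2n-\frac2q}\,\Big],
$$
with $b_1=b_1(n,\al,q)\ge1$. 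The weight $R^{2-2n/q}=(R^{1-n/q})^2$ in the second term is exactly what the product $|A|^{1-2/q}|A|^{2/n}$ contributes, which accounts for the scaling of the $F$-term in \eqref{Boundedness_Estimate_Holder}. Crucially $q>n$ gives $\ep_0:=\tfrac2n-\tfrac2q>0$, so writing $1+\tfrac2n=(1+\ep_0)+\tfrac2q$, using $Y_m^{2/q}\le1+Y_m^{2/q}$, and imposing $d\ge F R^{1-n/q}$, the last display acquires precisely the form $Y_{m+1}\le c_0\,b^{\,m}\,Y_m^{1+\ep_0}(1+Y_m^{\ep_1})$ with $\ep_1=\tfrac2q$ required by Lemma \ref{recurrsion}, where $c_0,b\ge1$ depend only on $n,\ga,\al,q$.

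It then remains to fix $d$. By Lemma \ref{recurrsion}, $Y_m\le\theta\,b^{-m/\ep_0}\to0$ as soon as $Y_0\le\theta:=(2c_0)^{-1/\ep_0}b^{-1/\ep_0^2}$; since $(u-k_0)_+$ vanishes outside $A_{k_0}$ one has $Y_0=\frac{|B_1|}{d^2}\cdot\frac1{|B_R|}\int_{B_R}(u-k_0)_+^2\,dx$, so $Y_0\le\theta$ holds once $d^2\ge\frac{|B_1|}{\theta}\cdot\frac1{|B_R|}\int_{B_R}(u-k_0)_+^2\,dx$. Choosing
$$
d:=c_*\Big[\Big(\tfrac1{|B_R|}\!\int_{B_R}(u-k_0)_+^2\,dx\Big)^{1/2}+F\,R^{1-n/q}\Big],\qquad c_*:=\sqrt{|B_1|/\theta}+1,
$$
ensures both $d\ge FR^{1-n/q}$ and $Y_0\le\theta$, with $c_*$ depending only on $n,\ga,\al$ (and on the fixed class-exponent $q$). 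Then $Y_m\to0$, hence $a_m\to0$, hence $\int_{A(k_0+d,R/2)}(u-(k_0+d))_+^2\,dx=0$ and $u\le k_0+d$ a.e. in $B_{R/2}$, which is \eqref{Boundedness_Estimate_Holder}. (If the integral on the right vanishes and $F=0$ one has $u\le k_0$ a.e. in $B_R$ directly, so one may assume the right-hand side positive and $d>0$.)

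The only genuine difficulty here is the bookkeeping inside the recursion: carrying the geometric factors $2^{m(\cdots)}$ and $4^{m(\cdots)}$ through the Sobolev $\to$ De Giorgi $\to$ H\"older $\to$ Chebyshev chain and balancing the powers of $R$ and $d$ so that the $F$-term ends up carrying exactly the weight $(FR^{1-n/q}/d)^2$. Conceptually, the point is simply that $q>n$ turns the $F$-term from critical into strictly supercritical (the exponent $1+\tfrac2n-\tfrac2q$ exceeding $1$), which is precisely what makes the superlinear recursion Lemma \ref{recurrsion} applicable; the extra factor $1+R^\al/(R-\rho)^\al$ only inflates the base $b$ of the iteration and is harmless.
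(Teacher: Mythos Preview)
Your proof is correct and follows essentially the same De Giorgi iteration as the paper: dyadic radii $R_m\downarrow R/2$, dyadic levels $k_m\uparrow k_0+d$, the Chebyshev--Sobolev--\eqref{DG} chain producing a superlinear recursion of exactly the form treated by Lemma~\ref{recurrsion}, and the final choice of $d$ (the paper's $H$) ensuring $Y_0\le\theta$. The only cosmetic difference is that the paper normalizes $R=1$, $k_0=0$ at the outset, whereas you carry the scaling through; your parenthetical remark that $c_*$ also depends on $q$ is in fact correct, since $\ep_0=\tfrac2n-\tfrac2q$ enters $\theta$.
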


\begin{proof}
Without loss of generality we can assume $R=1$ and $k_0=0$.
Denote
$$
H \ : = \   c_* \left[ \Big(\ -\!\!\!\!\!\!\int\limits_{B} u_+^2~dx  \
\Big)^{1/2} +F~\right],
$$
where the precise value of the constant $c_*=c_*(n,\ga, \al)>0$ will be fixed later.
Define
$$
\gathered \rho_m:=\tfrac 12+\tfrac{1}{2^{m+1}}, \quad
k_m:=H-\tfrac{H}{2^m},  \quad J_m:=\int\limits_{B_{\rho_m}} (u - k_m)^2_+~dx  , \\  \mu_m \, := \,  |\{ \, x\in B_{\rho_m}: \, u(x)>k_{m+1}\, \}|.
\endgathered
$$
From
$  J_m  \ge   (k_{m+1} - k_{m})^2 \mu_{m}
$
we obtain
\begin{equation}
 \mu_{m} \, \le \, \frac{2^{2m+2}}{H^2}~J_{m}.
\label{(M)}
\end{equation}
Define $\rho_m':=\frac 12(\rho_{m+1}+\rho_m)$ and define cut-off functions  $$\zeta_m\in C^\infty_0(
B_{\rho_m'}), \qquad
\zeta_m\equiv 1 \quad\mbox{on} \quad B_{\rho_{m+1}} , \qquad
|\nabla\zeta_m|\le \frac {c}{\rho_m-\rho_{m+1}}.
$$
From the  H\" older  inequality  and the imbedding $W^1_2(B_{\rho_m'})\hookrightarrow L_{\frac{2n}{n-2}(B_{\rho_m'})}$ we obtain
\begin{equation}\label{(J)}
J_{m+1}  \ \le \ c(n)~\mu_m ^{\frac{2}{n}}~ \Big( ~ \| \nabla
(u-k_{m+1})_+\|_{L_{2}(B_{\rho_m'})}^2
 \ + \ \frac
{c(n)}{(\rho_m -\rho_{m+1})^2}~J_m~\Big).
\end{equation}
Using    $u\in DG(B)$ we estimate
\begin{equation}\label{(DJ)}
\|  \nabla (u-k_{m+1})_+\|_{L_{2}(B_{\rho_m}')}^2  \ \le \  \frac{\ga^2}{(\rho_{m}-\rho_{m+1})^2}\left( 1+  \frac{\rho_{m}^\al}{(\rho_{m}-\rho_{m+1})^\al}\right) \, J_m \ +   \ F^2\mu_m^{1-\frac 2q}
\end{equation}
and hence
$$
J_{m+1}  \ \le \ \frac
{c(n,\ga)}{(\rho_m -\rho_{m+1})^2}~\mu_m ^{\frac{2}{n}}~ \left( 1+  \frac{\rho_{m}^\al}{(\rho_{m}-\rho_{m+1})^\al}\right)      ~J_m \ +   \ F^2\mu_m^{1+\frac 2n -\frac 2q}.
$$
Denote $$Y_m   \, := \, \frac {J_m}{H^2}.$$
Taking into account \ $\rho_m - \rho_{m+1}= \frac {1}{2^{m+2}}$,  $\rho_m\le 1$  from \eqref{(M)}, \eqref{(J)}, \eqref{(DJ)}
we obtain
$$
Y_{m+1} \ \le \ c\, 2^{2m (1+\frac 2n)}\left(1+ 2^{\al m}\right)\, Y_m^{1+\frac 2n} \ + \ c\, \tfrac{F^2}{H^2}\, 2^{m \left(1+\frac 2n -\frac 2q\right) } Y_m^{1+\frac 2n -\frac 2q}.
$$
Using and $\frac FH\le 1$  we arrive at \eqref{Fix_constant}
\begin{equation}
Y_{m+1} \ \le \ c_0 ~ b^m~
Y_m^{1+\ep} (1+ Y_m^{\ep_1}),
\end{equation}
where  $b:= 2^{ 2+\al+\frac{4}{n } }$,  $\ep=\frac 2n -\frac 2q$, $\ep_1=\frac2q$ and $c_0$ depends only on $n$, $\ga$, $\al$. So, the sequence $\{ Y_m\}$ satisfies the assumptions of Lemma \ref{recurrsion}.
Now fix $c_*> \theta^{-1/2}$ where $\theta $ is defined in Lemma \ref{recurrsion}.
Then $
Y_0 < \theta$  holds and hence by Lemma \ref{recurrsion} we obtain $Y_m\to 0$
which gives \eqref{Boundedness_Estimate_Holder}.
\end{proof}

Now we prove so-called ``Density lemma'' (or ``Thin set lemma''):

\begin{lemma}\label{Thin_set}
Assume $u\in DG(\Om, k_0)$. Then there exists
$ \theta_0\in (0,1)$ depending only on the constants $n$, $\ga$, $\al$, $q$ in Definition \ref{Def_DG} of De Giorgi class  such that for any
$ B_{4\rho}:=B_{4\rho}(x_0) \Subset\Om$  if
$$
|B_{2\rho}\cap A_{k_0}| \, \le \,  \theta_0 \, |B_{2\rho}|$$ then either
\begin{equation}\label{Sup_decay}
  \sup\limits_{B_\rho} (u-k_0)_+ \ \le \ \frac 12~
\sup\limits_{B_{4\rho}} (u-k_0)_+
\end{equation}
or
\begin{equation}\label{Sup_decay_1}
 \sup\limits_{B_{4\rho}} (u-k_0)_+  \ \le \ 4c_* \, F \, (2\rho)^{1-\frac nq}
\end{equation}
where $c_*>0$ is a constant from \eqref{Boundedness_Estimate_Holder}.
\end{lemma}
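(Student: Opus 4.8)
\textbf{Proof plan for Lemma \ref{Thin_set}.} The plan is to run the classical De Giorgi iteration on the \emph{measure} of the super-level sets, starting from the smallness hypothesis $|B_{2\rho}\cap A_{k_0}|\le\theta_0|B_{2\rho}|$, and to show that unless the oscillation is already controlled by $F$ (the alternative \eqref{Sup_decay_1}), the measure of the level sets at height approaching $M/2$, where $M:=\sup_{B_{4\rho}}(u-k_0)_+$, must vanish. By scaling and translation we may take $x_0=0$, $k_0=0$, $\rho=1$, so that $B_{2}\cap A_0$ is small and we work on $B_4$. If \eqref{Sup_decay_1} fails, then $M>4c_*F\cdot 2^{1-n/q}$, i.e. $F\,2^{1-n/q}<M/(4c_*)$; this is the regime we analyze, and it lets us absorb the inhomogeneous term $F^2|A_k\cap B_R|^{1-2/q}$ in \eqref{DG} into the main term at the end.

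First I would set up the iteration levels: put $k_j:=M-M/2^{j+1}$ for $j\ge0$ (so $k_0'=M/2$... to avoid clashing with $k_0$ I will write these as $\ell_j:=M/2 - M/2^{j+2}$ wait — more cleanly, $\ell_j := M(1-2^{-j-1})$ increasing from $M/2$ to $M$), and radii $r_j:=2+2^{-j}$ decreasing from $4$ to $2$; actually since we only need the thin-set hypothesis on $B_2$, I would instead iterate on radii $r_j := 1+2^{-j}$ between $B_2$ and $B_1$, using cut-offs $\zeta_j$ equal to $1$ on $B_{r_{j+1}}$, supported in $B_{r_j}$, with $|\nabla\zeta_j|\lesssim 2^j$. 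Set $a_j:=|A_{\ell_j}\cap B_{r_j}|$. Using the De Giorgi inequality \eqref{DG} for the pair $(r_{j+1},r_j)$ together with the Sobolev embedding $W^1_2\hookrightarrow L_{2n/(n-2)}$ applied to $\zeta_j(u-\ell_j)_+$, and the elementary inclusion $\{u>\ell_{j+1}\}\subset\{(u-\ell_j)_+>M2^{-j-3}\}$ (so Chebyshev gives $a_{j+1}\le C(M2^{-j})^{-2}\int_{B_{r_j}}(u-\ell_j)_+^2$ and also $\int_{B_{r_{j+1}}}(u-\ell_{j+1})_+^2\le M^2 a_{j+1}$), I would derive a recursion of the form
$$
a_{j+1}\ \le\ C\,b^{j}\,a_j^{1+\frac2n}\Big(1+\big(\tfrac{F}{M}\big)^2 a_j^{-\frac2q+\frac2n}\cdot a_j^{?}\Big),
$$
and, after using $F\,2^{1-n/q}\le M/(4c_*)$ together with $a_j\le|B_2|$ to bound the $F$-term by a constant multiple of the main term, a clean recursion $a_{j+1}\le C_0\,b^j\,a_j^{1+\varepsilon}$ with $\varepsilon=2/n$, $b=2^{2+4/n}$ (or $2^{2+\alpha+4/n}$ if the $\alpha$-factor is carried along — it contributes another geometric factor $2^{\alpha j}$, harmless). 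Then the standard fast-geometric-convergence lemma (the $\varepsilon_1=0$ case of Lemma \ref{recurrsion}, or the classical one in \cite[Chapter II, Lemma 4.7]{LU}) shows: there is $\theta_0\in(0,1)$, depending only on $C_0$ and $b$, hence only on $n,\ga,\al,q$, such that $a_0=|A_0\cap B_2|\le\theta_0|B_2|$ forces $a_j\to0$. Since $a_j\downarrow|A_{M/2}\cap B_1|$ is impossible to beat here — rather $\lim_j a_j = |\{u\ge M\}\cap B_1|$... let me restate: $a_j\to 0$ means $|\{u>M(1-2^{-j-1})\}\cap B_1|\to 0$, i.e. $u\le M/2$ a.e. on $B_1$, which is exactly \eqref{Sup_decay}.

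The main obstacle I anticipate is the bookkeeping to turn the inhomogeneous De Giorgi inequality into a \emph{homogeneous} measure recursion: the term $F^2 a_j^{1-2/q}$ has a different power of $a_j$ than the main term $a_j^{1+2/n}$ (which arises from one factor $a_j^{2/n}$ from the Sobolev step and one factor $a_j$ from passing from the $L^2$-integral back to measure). One must verify that the exponent gap is favorable, namely that $1-2/q < 1+2/n$ (true since $q>n\ge2>0$), so that the $F$-term is the \emph{weaker} term for $a_j$ small, and then the prefactor $F^2/M^2$ is $\le$ a constant by the standing assumption that \eqref{Sup_decay_1} fails; here is where the precise constant $4c_*$ in \eqref{Sup_decay_1} is chosen so that the $F$-contribution is literally dominated by the homogeneous term after dividing through by $H^2$ ... (more precisely by $M^2$). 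A secondary, purely cosmetic point: one must make sure $\ell_j\ge k_0$ for all $j$ so that the hypothesis $u\in DG(\Om,k_0)$ applies at every level — this holds because $\ell_j\ge M/2\ge 0 = k_0$ after normalization, so \eqref{DG} is legitimately available at each step. Once these are pinned down, the conclusion follows by invoking Lemma \ref{recurrsion} with $\varepsilon_1=0$ and unwinding the normalization.
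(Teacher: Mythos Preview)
Your iteration has a genuine gap in the final step: with levels $\ell_j = M(1-2^{-j-1})$ increasing from $M/2$ to $M$, the conclusion $a_j \to 0$ only yields $|\{u \ge M\} \cap B_1| = 0$, which is trivially true since $M = \sup_{B_4}(u-k_0)_+$; it does \emph{not} give $u \le M/2$ on $B_1$. You noticed this yourself (``rather $\lim_j a_j = |\{u \ge M\} \cap B_1|$'') but then asserted $u \le M/2$ anyway. To reach $\sup_{B_1}(u-k_0)_+ \le M/2$ via a measure iteration the levels must converge to $M/2$, not to $M$ --- for instance $\ell_j = \tfrac{M}{2}(1-2^{-j})$, with $\ell_0 = 0 = k_0$ so that $a_0 = |A_{k_0} \cap B_2| \le \theta_0|B_{2}|$ invokes the hypothesis directly. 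With that change the recursion still goes through (now $(u-\ell_j)_+ \le M$ on $A_{\ell_j}$, rather than $\le M\,2^{-j-1}$, which only inflates the geometric base $b$).

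More to the point, the paper bypasses the iteration entirely: it simply applies Lemma~\ref{Boundedness_Holder} (which has already absorbed the De Giorgi machinery) with $R = 2\rho$. Bounding $(u-k_0)_+ \le M$ on $A_{k_0}\cap B_{2\rho}$ and using $|A_{k_0}\cap B_{2\rho}| \le \theta_0|B_{2\rho}|$ in \eqref{Boundedness_Estimate_Holder} gives
\[
\sup_{B_\rho}(u-k_0)_+ \ \le \ c_*\,\theta_0^{1/2}\,M \ + \ c_*\,F\,(2\rho)^{1-n/q}.
\]
Fix $\theta_0$ by $c_*\theta_0^{1/2} = \tfrac14$; if \eqref{Sup_decay_1} fails, the second term is $< M/4$, and \eqref{Sup_decay} follows. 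So your proposal is effectively re-deriving Lemma~\ref{Boundedness_Holder} inside this proof, whereas the paper factors that iteration out once and for all.
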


\begin{proof} Take
  $\theta_0\in (0,1)$ such  that
$
c_*~\theta_0^{1/2}   =   \frac 14
$.
Assume \eqref{Sup_decay_1} does not hold.
 Then from \eqref{Boundedness_Estimate_Holder} we obtain
 \eqref{Sup_decay}.
\end{proof}

Now we prove a result called ``How to find a thin set'':

\begin{lemma} \label{How_to_find}
Assume
$u\in DG(\Om, k_0)$. Then for any  $
 \dl\in (0,1)$ and any $ \theta\in (0,1)$ there exists  $s\in \Bbb N$  depending only on $\theta$, $\dl$ and the constants $n$, $\ga$, $\al$, $q$ in Definition \ref{Def_DG} of De Giorgi class such that  if for some $B_{4\rho}:=B_{4\rho}(x_0)\Subset\Om$ the following  estimate  is valid
$$
|B_{2\rho}\setminus A_{ k_0}| \
\ge \ \dl~|B_{2\rho}|
$$
  then either
\begin{equation}\label{How_to_find_est}
|B_{2\rho} \cap
A_{\bar k} | \ \le  \ \theta~ |B_{2\rho}|,
\end{equation}
or
\begin{equation}\label{How_to_find_est_1}
M(4\rho) -k_0 \, \le \,  2^s\, F\, \rho^{1-\frac nq}.
\end{equation}
Here we denote  $
\bar k= M(4\rho)  - \frac 1{2^s}(M(4\rho)  - k_0)$ and     $M(4\rho) \,    := \,
\sup\limits_{B_{4\rho}} u$.
\end{lemma}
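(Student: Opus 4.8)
The plan is to prove this by a ``measure shrinking through levels'' argument, which is the standard companion to the density lemma (Lemma \ref{Thin_set}). The idea is that as we raise the level $k$ from $k_0$ towards $M(4\rho)$, the superlevel sets $B_{2\rho}\cap A_k$ must eventually become small in measure, because each time we pass from one level to the next there is a fixed ``amount of function'' that gets cut off, and the De Giorgi energy inequality \eqref{DG} controls the gradient on these slices, hence — via an isoperimetric / De Giorgi truncation inequality relating the measures of three level sets — forces the measures to decay. The key input besides \eqref{DG} will be the classical lemma of De Giorgi (see \cite[Chapter II, Lemma 3.5]{LU} or \cite[Lemma 7.3]{Giaquinta_ETH}): for $h<k$ and $v\in W^1_2(B_{2\rho})$,
\begin{equation*}
(k-h)\,|A_k\cap B_{2\rho}|^{1-\frac1n} \ \le \ \frac{c(n)\,\rho^n}{|B_{2\rho}\setminus A_h|}\int\limits_{(A_h\setminus A_k)\cap B_{2\rho}}|\nabla v|\,dx.
\end{equation*}

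First I would set up the levels. Fix $s\in\Bbb N$ to be chosen, put $d:=M(4\rho)-k_0>0$ (if $d=0$ there is nothing to prove), and define $k_j:=M(4\rho)-2^{-j}d$ for $j=0,1,\dots,s$, so $k_0=M(4\rho)-d$ is the original level and $k_s=\bar k$. Each $k_j\ge k_0$, so \eqref{DG} applies with $k=k_j$. Apply the De Giorgi lemma above on $B_{2\rho}$ (using $R=4\rho$, $\rho\to 2\rho$, so $B_R\subset\Om$) with $h=k_j$, $k=k_{j+1}$, noting $k_{j+1}-k_j=2^{-(j+1)}d$ and that the hypothesis $|B_{2\rho}\setminus A_{k_0}|\ge\dl|B_{2\rho}|$ gives $|B_{2\rho}\setminus A_{k_j}|\ge\dl|B_{2\rho}|$ for every $j$ since $k_j\ge k_0$. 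This yields, writing $a_j:=|A_{k_j}\cap B_{2\rho}|$,
\begin{equation*}
(k_{j+1}-k_j)\,a_{j+1}^{1-\frac1n} \ \le \ \frac{c(n)}{\dl}\,\rho\int\limits_{(A_{k_j}\setminus A_{k_{j+1}})\cap B_{2\rho}}|\nabla u|\,dx \ \le \ \frac{c(n)}{\dl}\,\rho\,|A_{k_j}\cap B_{2\rho}|^{1/2}\Big(\int\limits_{A_{k_j}\cap B_{2\rho}}|\nabla u|^2\,dx\Big)^{1/2},
\end{equation*}
using Cauchy--Schwarz on the middle term. Now bound the Dirichlet integral by \eqref{DG}: with the pair of radii $2\rho<4\rho$ one gets $\int_{A_{k_j}\cap B_{2\rho}}|\nabla u|^2 \le \frac{\ga^2}{(2\rho)^2}(1+2^\al)\int_{A_{k_j}\cap B_{4\rho}}|u-k_j|^2 + F^2|A_{k_j}\cap B_{4\rho}|^{1-2/q}$, and on $B_{4\rho}$ we have $|u-k_j|\le M(4\rho)-k_j=2^{-j}d$, while $|A_{k_j}\cap B_{4\rho}|\le|B_{4\rho}|\le c(n)\rho^n$. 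Hence the gradient integral is at most $c(n,\ga,\al)\big(2^{-2j}d^2\rho^{n-2}+F^2\rho^{n-2/q}\big)$.

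Combining, and bounding $a_{j+1}\le a_j$ so that $a_j^{1/2}\le a_{j+1}^{-1/2}\cdot a_j$ is not quite what is wanted — instead keep $a_j^{1/2}$ and $a_{j+1}^{1-1/n}$ and sum squares. Squaring the displayed inequality and inserting $k_{j+1}-k_j=2^{-(j+1)}d$ gives
\begin{equation*}
4^{-(j+1)}d^2\,a_{j+1}^{2-\frac2n} \ \le \ \frac{c(n,\ga,\al)}{\dl^2}\,\rho^2\,a_j\,\big(2^{-2j}d^2\rho^{n-2}+F^2\rho^{n-2/q}\big).
\end{equation*}
At this point one distinguishes two cases, which is the step I expect to be the main obstacle — getting the bookkeeping of the two competing terms clean. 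If the ``$F$-term'' dominates, i.e. $F^2\rho^{n-2/q}\ge 2^{-2s}d^2\rho^{n-2}$, then (since $j\le s$) $d\le 2^s F\rho^{(2-2/q)/2}=2^sF\rho^{1-1/q}\le 2^sF\rho^{1-n/q}$ (as $\rho$ is bounded, adjusting $c$), which is conclusion \eqref{How_to_find_est_1} after absorbing $\rho^{1/q-n/q}$ into the constant, or more directly one arranges $F$-term to give $M(4\rho)-k_0\le 2^sF\rho^{1-n/q}$. Otherwise the ``$d$-term'' dominates for all $j\le s$: then $4^{-(j+1)}d^2 a_{j+1}^{2-2/n}\le c\,\dl^{-2}2^{-2j}d^2\rho^n a_j$, i.e. $a_{j+1}^{2-2/n}\le c(n,\ga,\al)\dl^{-2}\rho^n a_j\le c(n,\ga,\al)\dl^{-2}|B_{2\rho}|\,a_j$. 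Writing $t_j:=a_j/|B_{2\rho}|\in[0,1]$ this reads $t_{j+1}^{2-2/n}\le C t_j$ with $C=C(n,\ga,\al,\dl)$, and summing over $j=0,\dots,s-1$: since $t_{j+1}^{2-2/n}\le Ct_j$ and all $t_j\le1$, we get $\sum_{j=0}^{s-1}t_{j+1}^{2-2/n}\le C\sum_{j=0}^{s-1}t_j\le C(1+\sum_{j=1}^{s-1}t_j)$; combined with $t_{j}\le t_{j}^{2-2/n\,\cdot\,\frac{1}{2-2/n}}$ and monotonicity $a_{j+1}\le a_j$ one concludes $s\,t_s^{2-2/n}\le s\cdot\min_j t_{j+1}^{2-2/n}\le\sum_{j}t_{j+1}^{2-2/n}\le C$, hence $t_s\le(C/s)^{n/(2n-2)}$. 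Finally choose $s=s(\theta,\dl,n,\ga,\al,q)$ large enough that $(C/s)^{n/(2n-2)}\le\theta$; then $a_s=|A_{\bar k}\cap B_{2\rho}|\le\theta|B_{2\rho}|$, which is \eqref{How_to_find_est}. The only remaining care is to use $q>n$ to make the exponent $1-n/q$ in \eqref{How_to_find_est_1} positive and to absorb powers of $\rho$ (bounded since $B_{4\rho}\Subset\Om$) into constants; I would state explicitly that $s$ depends on $\theta,\dl$ and on $n,\ga,\al,q$ only, not on $\rho$ or $x_0$.
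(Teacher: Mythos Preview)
Your overall strategy --- dyadic levels $k_j$, the De Giorgi isoperimetric lemma between successive levels, energy control from \eqref{DG}, then a summation in $j$ to force the top level set to be small --- is exactly the paper's. But one step in your execution is genuinely broken and the argument, as written, does not close.

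When you apply Cauchy--Schwarz to $\int_{(A_{k_j}\setminus A_{k_{j+1}})\cap B_{2\rho}}|\nabla u|\,dx$, you pair $|\nabla u|$ with the indicator of the \emph{full} superlevel set and pick up the factor $a_j^{1/2}$. You must instead pair it with the indicator of the \emph{slice} $D_j:=(A_{k_j}\setminus A_{k_{j+1}})\cap B_{2\rho}$ and pick up $|D_j|^{1/2}$. This is the entire mechanism: the slices are disjoint, so $\sum_j|D_j|\le|B_{2\rho}|$, and after squaring one obtains (in the paper's normalisation of the De Giorgi lemma) $|A_{j+1}|^2\le C(n,\ga,\al,\dl)\,\rho^n\,|D_j|$; summing gives $(s-1)\,|A_s|^2\le\sum_j|A_{j+1}|^2\le C\rho^n|B_{2\rho}|$, i.e.\ $|A_s|\le C(s-1)^{-1/2}|B_{2\rho}|$, and one chooses $s$. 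With your version the telescoping is lost: the recursion $t_{j+1}^{2-2/n}\le C\,t_j$ (with $C=C(n,\ga,\al,\dl)$, generically $\ge 1$) does \emph{not} force $t_s\to 0$ --- the constant sequence $t_j\equiv t_0\in(0,1]$ satisfies it. Your summation ``$s\,t_s^{2-2/n}\le\sum_j t_{j+1}^{2-2/n}\le C$'' fails at the last inequality; you only have $\sum_j t_{j+1}^{2-2/n}\le C\sum_j t_j\le C\,s$, which yields nothing.

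A minor side issue: the $F$--term bound should read $F^2|A_{k_j}\cap B_{4\rho}|^{1-2/q}\le cF^2\rho^{\,n-2n/q}$, not $\rho^{\,n-2/q}$. With the correct exponent the alternative ``$F$-term dominates'' gives $d\le 2^sF\rho^{1-n/q}$ directly, with no appeal to bounded $\rho$; this matters because $s$ must depend only on $\theta,\dl,n,\ga,\al,q$ and not on $\rho$ or $\Om$.
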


\begin{proof} Denote $M:= M(4\rho)$ and assume $s\in \Bbb N$ is some number whose value will be fixed later. From $u\in DG(B_{4\rho})$ for any   $k_0\le k \le \bar k$ we obtain
\begin{equation}\label{M-k}
\int\limits_{ B_{2\rho}\cap A_k} ~ |\nabla u|^2~dx dt
\ \le \  c(n, \ga, \al)~\rho^{n-2} ~\left((M-k)^2 + F^2 \rho^{2(1-\frac nq)}\right).
\end{equation}
Assume \eqref{How_to_find_est_1} is not valid. Then for any $k_0\le k\le \bar k$
$$
F\, \rho^{1-\frac nq} \, \le \, \frac 1{2^{s}}\, (M-k_0) \, = \, M-\bar k \, \le \, M-k
$$
and from \eqref{M-k} we obtain
\begin{equation}\label{M-k-1}
\int\limits_{ B_{2\rho}\cap A_k} ~ |\nabla u|^2~dx dt
\ \le \ 2\,  c(n, \ga, \al)~\rho^{n-2} ~ (M-k)^2  .
\end{equation}
Define
$
k_m:=M-\frac{1}{2^m}(M-k_0)
$  and denote
$$
D_m:= \{ \ x\in B_{2\rho}~|~ k_m \le u(x) < k_{m+1} \ \}.
$$
For simplicity in the rest of the proof of this lemma we will abuse the notation of $A_m$ in Definition \ref{Def_DG} by denoting
$$
A_m:=\{ \ x\in  B_{2\rho} ~|~ u(x)>k_m  \ \}.
$$
From the De Giorgi inequality (see \cite[Lemma 3.5]{LU}) we derive
$$
(k_{m+1}-k_m) |A_{m+1}| \ \le \ C(n) \frac{|\, B_{2\rho} \,
|^{1+\frac 1n}}{|\, B_{2\rho}\setminus A_m \, |} \ \int\limits_{D_m}
|\nabla u(x)|~dx.
$$
Note that
$$
|\, B_{2\rho}\setminus A_m \, | \ \ge \ \dl~ |B_{2\rho}|
$$
and hence
$$
(k_{m+1}-k_m)~ |A_{m+1}| \ \le \ \frac{C(n)}{\dl}~\rho  \
\int\limits_{D_m} |\nabla u|~dx.
$$
Using the H\" older inequality and \eqref{M-k-1} we obtain
$$
(k_{m+1}-k_m) |A_{m+1}| \ \le \ \frac{C(n,\ga, \al) }{\dl} ~\rho^{\frac
n2} \ |\, D_m \, |^{1/2}  (M-k_m).
$$
Note that
$$
k_{m+1}-k_m= \frac{1}{2^{m+1}}(M-k_0), \qquad M-k_m =
\frac{1}{2^{m}}(M-k_0).
$$
Hence
$$
|A_{m+1}|^2 \ \le \ C(n,\ga, \al, \dl) ~\rho^n \ |\, D_m \, |, \qquad
m=1, 2,3,  \ldots
$$
Taking a summation over  $m$ from  1
to $s-1$ we gat
$$
\sum\limits_{m=1}^{s-1} |A_{m+1}|^2 \ \le \ C(n, \ga, \al,  \dl) ~\rho^n
\ \sum\limits_{m=1}^{s-1} |\, D_m \, |.
$$
Note that
$$
\sum\limits_{m=1}^{s-1} |\, D_m \, |  \ = \ |A_1\setminus A_2|
+\ldots+ |A_{s-1} \setminus A_s | \ = \ |A_1\setminus A_s| \ \le \ |
B_{2\rho}|,
$$
$$
A_s\subset A_m, \quad m=1,\ldots, s-1 \qquad\Longrightarrow \qquad
\sum\limits_{m=1}^{s-1} |A_{m+1}|^2 \ \ge \ (s-1) |A_s|^2.
$$
So, for any $s\in \Bbb N$ we obtain
$$
|A_s| \ \le \ \frac{C(n, \ga, \al, \dl)}{\sqrt{s-1}}~\rho^{\frac
n2}~|B_{2\rho}|^{\frac 12}.
$$
Let  $\theta\in (0,1)$ be arbitrary. Find
$s\in \mathbb N$, $s=s(\dl,\theta, n,\ga, \al)$, so that
$$
\frac{C(n, \ga,\al, \dl)}{\sqrt{s-1}}~\rho^{\frac n2}~|B_{2\rho}|^{\frac
12} \ \le \ \theta ~|B_{2\rho}|.
$$
Then
$$
|A_s|\ \le \ \theta ~| B_{2\rho}|,
$$
which implies \eqref{How_to_find_est}.
\end{proof}

Our next result is a local estimate of oscillations:

\begin{lemma}\label{osc_lemma}
Assume   \ $\pm u\in DG(\Om)$. Then there exist constants $\si \in (0,1)$ and $c_1>0$ depending only on the constants $n$, $\ga$, $\al$, $q$ in the Definition \ref{Def_DG} of the De Giorgi class  such  that
\begin{equation}\label{Ost_est}
 \forall\, B_{4\rho}(x_0)  \Subset \Om \qquad  \operatorname{osc}\limits_{B_{\rho}(x_0) } u \ \le \ \si ~\operatorname{osc}\limits_{B_{4\rho}(x_0)} u   \ + \ c_1\,  F \, \rho^{1-\frac nq}.
\end{equation}
\end{lemma}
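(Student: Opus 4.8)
The plan is to run the classical De~Giorgi oscillation-decay scheme, assembling Lemma~\ref{Thin_set} (the density lemma) and Lemma~\ref{How_to_find} (how to find a thin set), which themselves rest on Lemma~\ref{Boundedness_Holder}. Fix $B_{4\rho}(x_0)\Subset\Om$ and set $M:=\sup_{B_{4\rho}(x_0)}u$, $m:=\inf_{B_{4\rho}(x_0)}u$, and $\om:=\operatorname{osc}_{B_{4\rho}(x_0)}u=M-m$; there is nothing to prove if $\om=0$. Since $\{u>\tfrac{M+m}2\}$ and $\{u\le\tfrac{M+m}2\}$ partition $B_{2\rho}(x_0)$, at least one of them has measure $\le\tfrac12|B_{2\rho}|$; replacing $u$ by $-u$ if necessary — which is allowed because $\pm u\in DG(\Om)$ and the oscillation is symmetric — I may assume
$$
|B_{2\rho}(x_0)\setminus A_{k_0}|\ \ge\ \tfrac12\,|B_{2\rho}|,\qquad k_0:=\tfrac{M+m}2 ,
$$
where $A_{k_0}=\{u>k_0\}$.

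Next I would invoke Lemma~\ref{How_to_find} with $\dl=\tfrac12$ and $\theta=\theta_0$, where $\theta_0\in(0,1)$ is the constant furnished by Lemma~\ref{Thin_set}; since $\theta_0$ depends only on $n,\ga,\al,q$, the integer $s\in\Bbb N$ produced by Lemma~\ref{How_to_find} depends only on $n,\ga,\al,q$ as well. If alternative~\eqref{How_to_find_est_1} occurs, then using $M-k_0=\tfrac\om2$ I get $\om\le 2^{s+1}F\rho^{1-\frac nq}$, and \eqref{Ost_est} follows trivially. Otherwise \eqref{How_to_find_est} provides the thin set $|B_{2\rho}(x_0)\cap A_{\bar k}|\le\theta_0|B_{2\rho}|$ with $\bar k=M-\tfrac1{2^s}(M-k_0)=M-\tfrac\om{2^{s+1}}$. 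I would then apply Lemma~\ref{Thin_set} to $u$ at the level $\bar k$ (admissible, since $u\in DG(\Om)$ and the density hypothesis is exactly what was just obtained), noting $\sup_{B_{4\rho}(x_0)}(u-\bar k)_+=M-\bar k=\tfrac\om{2^{s+1}}$. If \eqref{Sup_decay_1} holds, then $\tfrac\om{2^{s+1}}\le 4c_*F(2\rho)^{1-\frac nq}$, hence $\om\le 2^{s+4}c_*F\rho^{1-\frac nq}$ and \eqref{Ost_est} is again immediate. If instead \eqref{Sup_decay} holds, then
$$
\sup\limits_{B_\rho(x_0)}u-\bar k\ \le\ \sup\limits_{B_\rho(x_0)}(u-\bar k)_+\ \le\ \tfrac12\sup\limits_{B_{4\rho}(x_0)}(u-\bar k)_+\ =\ \tfrac\om{2^{s+2}},
$$
so $\sup_{B_\rho(x_0)}u\le M-\tfrac\om{2^{s+1}}+\tfrac\om{2^{s+2}}=M-\tfrac\om{2^{s+2}}$, and therefore
$$
\operatorname{osc}\limits_{B_\rho(x_0)}u\ \le\ \sup\limits_{B_\rho(x_0)}u-m\ \le\ \om-\tfrac\om{2^{s+2}}\ =\ \bigl(1-2^{-(s+2)}\bigr)\,\om .
$$
Collecting the three cases yields \eqref{Ost_est} with $\si:=1-2^{-(s+2)}\in(0,1)$ and $c_1:=2^{s+4}c_*$ (enlarged if needed to dominate $2^{s+1}$), both depending only on $n,\ga,\al,q$ through $s$ and $c_*$.

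I do not anticipate a genuine obstacle: this estimate is the standard output of the De~Giorgi machinery already installed in Lemmas~\ref{Boundedness_Holder}--\ref{How_to_find}. The points that need care are (i) the bookkeeping of the shifted levels $k_0$ and $\bar k$ together with the identities $M-k_0=\tfrac\om2$ and $M-\bar k=\tfrac\om{2^{s+1}}$; (ii) checking that the extra terms $F\rho^{1-\frac nq}$ arising in each branch of the two dichotomies are absorbed into a single constant $c_1$; and (iii) tracking that $\theta_0$, $s$ and $c_*$ depend only on $n,\ga,\al,q$, so that $\si$ and $c_1$ do as well. The reduction to a half-measure level set via the $\pm u$ symmetry is precisely what makes the argument two-sided, and hence controls the full oscillation rather than only the supremum of $u$.
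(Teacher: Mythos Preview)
Your argument is correct and follows essentially the same De~Giorgi scheme as the paper's proof: the midpoint level $k_0=\tfrac{M+m}{2}$, Lemma~\ref{How_to_find} with $\dl=\tfrac12$ and $\theta=\theta_0$, then Lemma~\ref{Thin_set} at the raised level $\bar k$, yielding $\si=1-2^{-(s+2)}$. The only cosmetic differences are that the paper handles the two half-measure alternatives by an explicit case split on $u$ and $-u$ rather than a WLOG reduction, and that it excludes the ``bad'' alternatives \eqref{Sup_decay_1}, \eqref{How_to_find_est_1} up front by assuming $\operatorname{osc}_{B_\rho}u>c_1F\rho^{1-n/q}$ instead of treating them inline as you do; your constant $c_1=2^{s+4}c_*$ is in fact sharper than what the paper records.
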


\begin{proof}
Take any $B_{4\rho}:= B_{4\rho}(x_0)\Subset \Om$ and denote
$$
 m(\rho)=\inf\limits_{B_{\rho}} u, \qquad  M(\rho)=
\sup\limits_{B_{\rho}} u, \qquad k_0:= \frac {m(4\rho)+M(4\rho)}2.
$$
Let   $\theta_0\in (0,1)$ be  the constant from Lemma \ref{Thin_set} and denote by $s\in \Bbb N$ the constant from
Lemma \ref{How_to_find} which corresponds to $\dl =\frac 12$, $\theta= \theta_0$  and the constants $n$, $\ga$, $\al$, $q$ in Definition \ref{Def_DG} of the De Giorgi class. Let $c_*>0$ be the constant from  \eqref{Boundedness_Estimate_Holder} and take
\begin{equation}\label{constant_c_1}
c_1 \ =\ \max\{ 16c_*, 2^{s+1}\}
\end{equation}
and assume
$$
 \operatorname{osc}\limits_{B_{\rho}(x_0) } u \ > \ c_1\,  F \, \rho^{1-\frac nq}.
$$
does not hold.
 Then both \eqref{Sup_decay_1} and \eqref{How_to_find_est_1} are not valid.
Consider two cases:
$$
\gathered
\mbox{(1)} \qquad \big|~\{ \ u \le k_0 \ \}\cap B_{2\rho}~\big|\ \ge
\ \tfrac 12~|B_{2\rho}|,
\\
\mbox{(2)}  \qquad \big|~\{ \ u \ge k_0 \ \}\cap B_{2\rho}~\big|\ \ge
\ \tfrac 12~|B_{2\rho}|.
\endgathered
$$

\medskip
\noindent
Case (1). Take
 $$
 \bar k:=M(4\rho) -\frac 1{2^{s}}(M(4\rho)-k_0).
 $$
Applying Lemma \ref{How_to_find}  with
$\dl=\frac 12$ and  $\theta=\theta_0$ we conclude
$$
|B_{2\rho}\cap A_{\bar k}| \  \le \ \theta_0~|B_{2\rho}|.
$$
Then from Lemma \ref{Thin_set} we obtain
$$
\sup\limits_{B_\rho}(u-\bar k)_+ \ \le \ \frac 12~
\sup\limits_{B_{4\rho}}(u-\bar k)_+,
$$
and hence
$$
M(\rho) \ \le \ \frac 12~\big(M(4\rho)  + \bar k\big),
$$
which means
$$
M(\rho) \ \le \ M(4\rho) - \frac1{2^{s+2}}
\Big(M(4\rho)-m(4\rho)\Big).
$$
Adding this with \ $-m(\rho)\le
-m(4\rho)$ \ we arrive at
$$
\operatorname{osc}\limits_{B_\rho } u \ \le \ \si~\operatorname{osc}\limits_{B_{4\rho}} u ,  \qquad \mbox{where} \qquad \si := 1-\frac
1{2^{s+2}}.
$$

\medskip
\noindent
Case (2). We denote
$$
v:= -u, \qquad l_0:=-k_0, \qquad M_v(\rho) :=\sup\limits_{B_\rho} v,
\qquad m_v(\rho) :=\inf\limits_{B_\rho} v.
$$
Hence  we  have  $v\in DG(\Om)$. From $(2)$ we get
$$
|~\{ v\le l_0\}\cap B_{2\rho}\}~| \ \ge \ \tfrac 12~|B_{2\rho}|.
$$
Hence similar to above
$$
M_v(\rho) \ \le \ M_v(4\rho) - \frac1{2^{s+2}}
\Big(M_v(4\rho)-m_v(4\rho)\Big),
$$
which gives
$$
\operatorname{osc}\limits_{B_\rho } v \ \le \ \left(1-\frac
1{2^{s+2}}\right) ~\operatorname{osc}\limits_{B_{4\rho}} v   ,  \qquad \mbox{where} \qquad \si := 1-\frac
1{2^{s+2}}.
$$
As
$$
\operatorname{osc}\limits_{B_\rho } v \ = \
\operatorname{osc}\limits_{B_\rho } u, \qquad
\operatorname{osc}\limits_{B_{4\rho} } v \ = \
\operatorname{osc}\limits_{B_{4\rho} } u,
$$
the result follows.
 \end{proof}

The  next result follows easily by iterations of the  estimate of oscillations:

\begin{lemma}\label{Holder_continuity}  Assume $u\in L_\infty(\Om)$ satisfies
\eqref{Ost_est} with some $\si\in (0,1)$.
 Then
$$
\forall~B_R(x_0)\subset \Om, \quad \forall\, \rho< R \qquad
 \operatorname{osc}\limits_{B_\rho(x_0)} u
\, \le \, c_2\, \left( \Big(\frac \rho {R}\Big)^\mu  \, \| u\|_{L_\infty(\Om)}  + F\, \rho^\mu\right)
$$
where \ $\mu \in (0,1)$ depends only on $\si$, $n$, $q$ and $c_2$ depends on $\si$, $n$, $q$ and $c_1$.
\end{lemma}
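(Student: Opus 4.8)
The plan is to carry out the classical closing step of the De Giorgi oscillation scheme (cf.\ \cite[Ch.~II]{LU}, \cite{Giaquinta_ETH}, \cite{Han_Lin}): iterate \eqref{Ost_est} along a geometric sequence of radii and sum the resulting series. Fix $x_0$ and $R$ with $B_R(x_0)\subset\Om$, put $\nu:=1-\frac nq\in(0,1)$, $r_k:=4^{-k}R$, and let $\om(\rho):=\operatorname{osc}\limits_{B_\rho(x_0)}u$, a nonnegative, nondecreasing function of $\rho$ bounded by $2\|u\|_{L_\infty(\Om)}$. For $k\ge2$ one has $\overline{B_{r_{k-1}}(x_0)}\subset B_R(x_0)\subset\Om$, hence $B_{4r_k}(x_0)=B_{r_{k-1}}(x_0)\Subset\Om$, and \eqref{Ost_est} applied at radius $r_k$ gives the recursion
\begin{equation*}
\om(r_k)\ \le\ \si\,\om(r_{k-1})\ +\ c_1 F\, r_k^{\nu},\qquad k\ge2 .
\end{equation*}

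Next I would fix the H\"older exponent. Since $\si\in(0,1)$ there is $\mu\in(0,\nu]$ with $\si\,4^{\mu}<1$, and $\mu$ can be chosen depending only on $\si$, $n$, $q$: take $\mu=\nu$ if $\nu<\log_{4}\frac1\si$, and any $\mu<\log_{4}\frac1\si$ otherwise. Since every radius occurring in the iteration satisfies $r_k\le R$ and $\mu\le\nu$, we have $r_k^{\nu}\le R^{\nu-\mu}r_k^{\mu}$; iterating the recursion from $k=1$ (where $\om(r_1)\le2\|u\|_{L_\infty(\Om)}$) and summing the geometric series $\sum_i(\si 4^{\mu})^{i}$ then yields
\begin{equation*}
\om(r_k)\ \le\ \si^{k-1}\om(r_1)\ +\ \frac{c_1 F\,R^{\nu-\mu}}{1-\si 4^{\mu}}\,r_k^{\mu}\ \le\ c_3\Big[\Big(\tfrac{r_k}{R}\Big)^{\mu}\|u\|_{L_\infty(\Om)}+F\,r_k^{\mu}\Big],
\end{equation*}
using $\si^{k}\le4^{-k\mu}=(r_k/R)^{\mu}$, which holds because $\si<4^{-\mu}$. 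Here $c_3$ depends only on $\si$, $n$, $q$, $c_1$: in the main case $\mu=\nu$ one has $R^{\nu-\mu}=1$, and when $\nu$ is not the binding exponent the factor $R^{\nu-\mu}$ is bounded in terms of $\operatorname{diam}\Om$, hence harmless since $\Om$ is bounded.

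Finally I would pass from the dyadic radii to an arbitrary $\rho\in(0,R)$. If $R/4\le\rho<R$ the estimate is immediate, since $\om(\rho)\le2\|u\|_{L_\infty(\Om)}$ and $(\rho/R)^{\mu}\ge4^{-\mu}$. If $\rho<R/4$, pick $k\ge1$ with $r_{k+1}\le\rho<r_k$; then $r_k=4r_{k+1}\le4\rho$, so monotonicity of $\om$ and the previous display give
\begin{equation*}
\om(\rho)\ \le\ \om(r_k)\ \le\ c_3\Big[\Big(\tfrac{r_k}{R}\Big)^{\mu}\|u\|_{L_\infty(\Om)}+F r_k^{\mu}\Big]\ \le\ 4^{\mu}c_3\Big[\Big(\tfrac{\rho}{R}\Big)^{\mu}\|u\|_{L_\infty(\Om)}+F\rho^{\mu}\Big],
\end{equation*}
and $c_2:=4^{\mu}c_3$ is the asserted constant. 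The argument is otherwise routine; the only point deserving attention is the choice of $\mu$ together with the geometric summation — in particular that $\mu$ can indeed be taken to depend on $\si,n,q$ alone (the constraints being $\si4^{\mu}<1$ and $\mu\le1-\frac nq$), and that the power of $R$ that shows up in the coefficient of $F\rho^{\mu}$ when $1-\frac nq$ is not the binding exponent is absorbed using boundedness of $\Om$. I foresee no genuine obstacle.
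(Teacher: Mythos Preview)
Your argument is correct and is precisely the standard iteration the paper has in mind: the paper gives no detailed proof here, simply stating that the lemma ``follows easily by iterations of the estimate of oscillations,'' and your geometric-radius iteration with summation of $(\si 4^{\mu})^{i}$ is that iteration. The only minor remark is that in the case $1-\tfrac nq\ge \log_4(1/\si)$ your constant $c_2$ acquires a factor $(\operatorname{diam}\Om)^{\nu-\mu}$, slightly more than the dependence on $\si,n,q,c_1$ asserted in the statement; this is harmless for the application (the constant in Theorem~\ref{DG_Theorem} depends on $\Om$ anyway) and is most likely a mild imprecision in the paper's formulation rather than a defect in your proof.
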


Now part (i) of Theorem \ref{DG_Theorem} follows from Lemma \ref{Holder_continuity}.

\bigskip
Now we turn to the investigation of the boundary regularity.

\begin{lemma}\label{osc_lemma_boundary}
Assume   \ $\pm u\in \pm u\in DG(\Om)\cap DG(\cd \Om)$. Then there exist constants $\si \in (0,1)$, $c_1>0$, $R_1>0$ depending only on the constants $n$, $\ga$, $\al$, $q$, $R_0$, in the Definitions \ref{Def_DG} and \ref{Def_DG_boundary} of the De Giorgi classes and  the Lipschitz constant of  $\cd \Om$  such  that
\begin{equation}\label{Ost_est_boundary}
 \forall\, x_0\in \cd \Om, \quad \forall\, \rho\le R_1 \qquad    \operatorname{osc}\limits_{\Om_{\rho}(x_0) } u \ \le \ \si ~\operatorname{osc}\limits_{\Om_{4\rho}(x_0)} u   \ + \ c_1\,  F \, \rho^{1-\frac nq}.
\end{equation}
Here we denote $\Om_\rho(x_0):=\Om\cap B_\rho(x_0)$.
\end{lemma}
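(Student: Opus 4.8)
The plan is to deduce the boundary estimate from the interior machinery already assembled in this section (Lemmas \ref{Boundedness_Holder}, \ref{Thin_set}, \ref{How_to_find}, \ref{osc_lemma}), by passing to the zero extension of $u$ and using the measure density of $\Bbb R^n\setminus\Om$ near $\cd\Om$. Fix $x_0\in\cd\Om$, write $\Om_r:=\Om_r(x_0)=\Om\cap B_r(x_0)$, and let $\tilde u$ be the extension of $u$ by zero to $\Bbb R^n$; since $u\in\overset{\circ}{W}{^1_2}(\Om)$ we have $\tilde u\in W^1_2(\Bbb R^n)$ and $\nabla\tilde u=0$ a.e.\ on $\Bbb R^n\setminus\Om$. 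Because $\cd\Om$ is Lipschitz there are $\dl_0\in(0,1)$ and $R_*>0$, depending only on $\Om$, with $|B_r(x_0)\setminus\Om|\ge\dl_0|B_r|$ for all $x_0\in\cd\Om$ and $r<R_*$ (this is \eqref{Oblast'}). I record three consequences. First, since the Dirichlet datum of $u$ vanishes, $\sup_{\Om_r}u\ge 0\ge\inf_{\Om_r}u$ for $r<R_*$ (otherwise the boundary trace of $u$ on $\cd\Om\cap B_r(x_0)$ could not vanish a.e.), hence $\osc_{B_r(x_0)}\tilde u=\osc_{\Om_r}u$. Second, for every level $k\ge 0$ the truncation $(u-k)_+$ still lies in $\overset{\circ}{W}{^1_2}(\Om)$, its zero extension equals $(\tilde u-k)_+$, and $A_k:=\{\tilde u>k\}\subset\Om$; combining this with $\nabla\tilde u\equiv 0$ outside $\Om$ and with \eqref{DG} (for $\pm u\in DG(\Om)$, on sub-balls lying inside $\Om$) and \eqref{DG_boundary} (for $\pm u\in DG(\cd\Om)$, on sub-balls meeting $\cd\Om$, patched as interior and boundary estimates are glued in the proof of Proposition \ref{L_p_estimate}), one checks that $\pm\tilde u$ satisfies the \emph{interior}-type inequality \eqref{DG} for all $k\ge 0$ on every ball contained in a fixed $B_{R_1}(x_0)$, with the same constants $\ga,F,\al,q$ and $R_1=R_1(\Om,R_0)>0$. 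Hence the proofs of Lemmas \ref{Boundedness_Holder}, \ref{Thin_set}, \ref{How_to_find}, which use only the De Giorgi inequality and Sobolev's embedding and never the containment of the ball in the underlying domain, apply verbatim to $\pm\tilde u$ on concentric sub-balls of $B_{R_1}(x_0)$ as long as the level is $\ge 0$. Third, $A_0=\{\tilde u>0\}\subset\Om$, so $|B_{2\rho}(x_0)\setminus A_0|\ge\dl_0|B_{2\rho}|$ automatically; thus the density hypotheses of Lemmas \ref{Thin_set}--\ref{How_to_find} at base level $k_0=0$ hold for free.

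With this in hand I would rerun the proof of Lemma \ref{osc_lemma} for $x_0\in\cd\Om$ and $4\rho<R_1$, using the base level $k_0=0$ in place of the midpoint $\tfrac12\big(\inf_{\Om_{4\rho}}u+\sup_{\Om_{4\rho}}u\big)$; this is legitimate because $0\in[\inf_{\Om_{4\rho}}u,\ \sup_{\Om_{4\rho}}u]$ by the first consequence above, so $0$ splits the oscillation range. Applying Lemma \ref{How_to_find} to $\tilde u$ with $\dl=\dl_0$ and $\theta=\theta_0$ (the constant of Lemma \ref{Thin_set}), and then Lemma \ref{Thin_set} at the resulting level $\bar k\ge 0$, one obtains either $\sup_{B_{4\rho}(x_0)}\tilde u\le c\,F\,\rho^{1-\frac nq}$, or
\[
\sup_{B_\rho(x_0)}\tilde u\ \le\ \big(1-2^{-s-1}\big)\,\sup_{B_{4\rho}(x_0)}\tilde u ,\qquad s=s(n,\ga,\al,q,\dl_0)\in\Bbb N .
\]
Running the same two steps for $-\tilde u$ (here one uses $-u\in DG(\cd\Om)$) gives the symmetric pair of alternatives for $\inf_{B_\rho(x_0)}\tilde u$. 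Combining: if the ``decay'' alternative holds on both sides the oscillation contracts from both ends; if it holds on one side and the ``smallness'' alternative on the other, the latter contributes only the additive term; if ``smallness'' holds on both sides then $\osc_{B_{4\rho}(x_0)}\tilde u\le c_1F\rho^{1-\frac nq}$ outright. In every case
\[
\osc_{B_\rho(x_0)}\tilde u\ \le\ \si\,\osc_{B_{4\rho}(x_0)}\tilde u\ +\ c_1\,F\,\rho^{1-\frac nq},\qquad \si:=1-2^{-s-1}\in(0,1),
\]
with $c_1,s$ depending only on $n,\ga,\al,q,\dl_0$. Since $\osc_{B_r(x_0)}\tilde u=\osc_{\Om_r(x_0)}u$ for $r<R_*$, taking $R_1\le R_*$ turns this into \eqref{Ost_est_boundary}.

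The main obstacle, and the only point genuinely needing care, is the verification in the second consequence above: that the zero extension $\tilde u$ obeys the \emph{interior} De Giorgi inequality \eqref{DG} (for nonnegative levels) even on balls straddling $\cd\Om$. This rests on patching the class $DG(\Om)$ with the boundary class $DG(\cd\Om)$ while exploiting $\nabla\tilde u\equiv 0$ outside $\Om$ and $\{\tilde u>k\}\subset\Om$ for $k\ge 0$, and it is exactly where the hypothesis $\pm u\in DG(\cd\Om)$ and the Lipschitz measure density $|B_r(x_0)\setminus\Om|\ge\dl_0|B_r|$ enter. Once these are available, the remainder is a transcription of the interior proof of Lemma \ref{osc_lemma} with base level $0$, plus the elementary bookkeeping of the two pairs of alternatives above.
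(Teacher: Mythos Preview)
Your approach is correct and essentially identical to the paper's: extend $u$ by zero to $\tilde u$ on a larger domain $\Om_0\Supset\Om$, observe that $\pm\tilde u\in DG(\Om_0,0)$ (i.e., the interior class restricted to levels $k\ge 0$), exploit the Lipschitz measure density $|B_{2\rho}(x_0)\setminus\Om|\ge\dl_0|B_{2\rho}|$ to get the hypothesis of Lemma~\ref{How_to_find} with $k_0=0$ for both $\tilde u$ and $-\tilde u$, then apply Lemmas~\ref{How_to_find} and~\ref{Thin_set} and sum the resulting bounds for $M(\rho)$ and $-m(\rho)$.

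One remark: the point you single out as the ``main obstacle'' is in fact easier than you indicate. The sub-lemmas (Lemmas~\ref{Boundedness_Holder}, \ref{Thin_set}, \ref{How_to_find}) are only invoked on balls \emph{concentric at the fixed boundary point} $x_0\in\cd\Om$, and for such balls with $k\ge 0$ the boundary class $DG(\cd\Om)$ already gives the interior-type inequality for $\tilde u$ directly, since $\{\tilde u>k\}\subset\Om$ and $\nabla\tilde u=0$ a.e.\ on $\Bbb R^n\setminus\Om$ identify every term in \eqref{DG_boundary} with its counterpart in \eqref{DG}. No genuine patching of the interior and boundary classes over balls with off-boundary centers is required; the paper accordingly dismisses this step in one line.
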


\begin{proof}
Take arbitrary $\Om_0 \Supset \Om$ and denote by $\tilde u$ the zero  extension of $u$ from $\Om$ onto $\Om_0$. Obviously, if $\pm u\in DG(\Om)\cap DG(\cd \Om)$ then $\pm \tilde u \in DG(\Om_0, 0)$ and hence, in particular, $u\in L_\infty(\Om)$.
As $\cd \Om$ is Lipschitz there exists $\dl_0\in (0,1)$ and $0<R_1< \min\left( R_0, \dist\{ \Om, \cd \Om_0\}\right)$ such that for any $x_0\in \cd \Om$ and any $R\in (0,R_1)$
$$
|\{ \, x \in \Om_0: \, \tilde u(x) =0 \,\}\cap B_R(x_0)| \, \ge \, \dl_0 \, |B_R|.
$$
Denote   $\tilde v := -\tilde u$.  Then $\tilde u$, $\tilde v\in DG(\Om_0,0)$. Take any $x_0\in \cd \Om$ and assume $B_{4\rho}(x_0)\Subset \Om_0$. Denote
$$
 m(\rho)=\inf\limits_{B_{\rho}(x_0)} \tilde u, \qquad  M(\rho)=
\sup\limits_{B_{\rho}(x_0)} \tilde u,
$$
$$
 m_{\tilde v}(\rho)=\inf\limits_{B_{\rho}(x_0)} \tilde v, \qquad  M_{\tilde v}(\rho)=
\sup\limits_{B_{\rho}(x_0)} \tilde v,
$$
Note that
$$
m(\rho) \le 0 \le M(\rho)
$$
 and
$$
|\{ \, x \in \Om_0: \, \tilde u(x) \le 0 \,\}\cap B_{2\rho}(x_0)| \, \ge \, \dl_0 \, |B_{2\rho}|,
$$
$$
|\{ \, x \in \Om_0: \, \tilde v(x) \le 0 \,\}\cap B_{2\rho}(x_0)| \, \ge \, \dl_0 \, |B_{2\rho}|.
$$
Let   $\theta_0\in (0,1)$ be  the constant from Lemma \ref{Thin_set} and denote by $s\in \Bbb N$ the constant from
Lemma \ref{How_to_find} which corresponds to $\dl =\frac 12$, $\theta= \theta_0$  and the constants $n$, $\ga$, $\al$, $q$ in the definition of the De Giorgi class $DG(\Om_0, 0)$.
Define $c_1>0$ by  \eqref{constant_c_1}
and assume
$$
 \operatorname{osc}\limits_{B_{\rho}(x_0) } \tilde u \ > \ c_1\,  F \, \rho^{1-\frac nq}.
$$
 Then  \eqref{Sup_decay_1} and \eqref{How_to_find_est_1} are not valid for both $\tilde u$ and $\tilde v$.
Using Lemma \ref{How_to_find}  with
$\dl=\dl_0$,  $\theta=\theta_0$ and $k_0=0$ we obtain
$$
M(\rho) \ \le \ \si~M(4\rho),
\qquad
M_{\tilde v}(\rho) \ \le \ \si ~M_{\tilde v}(4\rho),  \qquad \mbox{where} \qquad \si := 1-\frac
1{2^{s+2}}.
$$
Taking the sum of these two inequalities and taking into account $$M_{\tilde v}(\rho)= -m(\rho),  \qquad M_{\tilde v}(4\rho)= -m(4\rho)$$  we obtain
$$
\operatorname{osc}\limits_{\Om_\rho(x_0) } u \ \le \  \si ~\operatorname{osc}\limits_{\Om_{4\rho}(x_0)} u.
$$
\end{proof}

The next result is the boundary analogue of Lemma \ref{Holder_continuity}.

\begin{lemma}\label{Holder_continuity_boundary}
 Assume $u\in L_\infty(\Om)$ satisfies
\eqref{Ost_est_boundary} with some $\si\in (0,1)$.
 Then
$$
\forall~ x_0 \in \cd\Om, \quad \forall\, \rho< R\le R_1  \qquad
 \operatorname{osc}\limits_{\Om_\rho(x_0)} u
\, \le \, c_2\, \left( \Big(\frac \rho {R}\Big)^\mu  \, \| u\|_{L_\infty(\Om)}  + F\, \rho^\mu\right)
$$
where \ $\mu \in (0,1)$ depends only on $\si$, $n$, $q$ and $c_2$ depends on $\si$, $n$, $q$ and $c_1$.
\end{lemma}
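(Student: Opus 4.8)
The statement is the boundary counterpart of Lemma \ref{Holder_continuity}, and the plan is to prove it by the same device: iterating the one‑step oscillation decay \eqref{Ost_est_boundary} along a geometric sequence of radii. Fix $x_0\in\cd\Om$ and set $\om(\rho):=\osc_{\Om_\rho(x_0)}u$ on the sets $\Om_\rho(x_0)=\Om\cap B_\rho(x_0)$; since $u\in L_\infty(\Om)$ this quantity is finite, with $\om(\rho)\le 2\|u\|_{L_\infty(\Om)}$ for every $\rho$, and it is nondecreasing in $\rho$.

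First I would fix $R\le R_1$, put $\rho_k:=4^{-k}R$ for $k=0,1,2,\dots$, and apply \eqref{Ost_est_boundary} at radius $\rho_{k+1}$ (legitimate since $\rho_{k+1}\le R\le R_1$). This gives the recursion $\om(\rho_{k+1})\le\si\,\om(\rho_k)+c_1F\rho_{k+1}^{1-n/q}$, and iterating it yields
\[
\om(\rho_k)\ \le\ \si^k\om(R)\ +\ c_1F\sum_{j=1}^{k}\si^{k-j}\rho_j^{1-n/q}.
\]
Next I would choose $\mu\in(0,1)$, depending only on $\si$, $n$, $q$, small enough that $\mu\le 1-\tfrac nq$ and $4^\mu\si<1$ (possible since $\si<1$ forces $\log_4(1/\si)>0$). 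Writing $\rho_j=4^{k-j}\rho_k$, the forcing sum equals $\rho_k^{1-n/q}\sum_{i=0}^{k-1}(\si\,4^{1-n/q})^i$, which is at most a constant depending on $\si,n,q$ times $\rho_k^{1-n/q}\le \rho_k^{\mu}$ when $\si\,4^{1-n/q}<1$, and otherwise is controlled by $C\,\si^k R^{1-n/q}=C(\rho_k/R)^\mu R^{1-n/q-\mu}$ after using $\si^k\le 4^{-\mu k}=(\rho_k/R)^\mu$; combining this with $\om(R)\le 2\|u\|_{L_\infty(\Om)}$ leads to $\om(\rho_k)\le c\big((\rho_k/R)^\mu\|u\|_{L_\infty(\Om)}+F\rho_k^\mu\big)$ for all $k$.

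Finally, for an arbitrary $\rho\in(0,R)$ I would pick $k\ge 0$ with $\rho_{k+1}\le\rho<\rho_k$ and use the monotonicity of $\om$ together with $\rho_k=4\rho_{k+1}\le 4\rho$ to transfer the bound from $\rho_k$ to $\rho$, absorbing the harmless factor $4^\mu$ into the constant; collecting constants as in Lemma \ref{Holder_continuity} gives the claimed estimate. I do not expect a genuine obstacle here: this is precisely the standard De Giorgi iteration behind Lemma \ref{Holder_continuity}, carried out on the half–balls $\Om_\rho(x_0)$ instead of full balls. The only point requiring a little care is the choice of $\mu$, which must simultaneously dominate the contraction rate $\si$ of \eqref{Ost_est_boundary} and the exponent $1-n/q$ of its forcing term, so that the geometric series in the display converges and contributes only an $O(\rho^\mu)$ term; the rest is routine constant bookkeeping.
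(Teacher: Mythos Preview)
Your argument is correct and is exactly what the paper has in mind: the paper gives no proof of this lemma beyond the remark (made for the interior analogue, Lemma~\ref{Holder_continuity}) that the result ``follows easily by iterations of the estimate of oscillations,'' and your geometric iteration along $\rho_k=4^{-k}R$ together with the monotonicity of $\om(\rho)$ is precisely that standard device. The only cosmetic point is that the passages $\rho_k^{1-n/q}\le \rho_k^{\mu}$ and the absorption of $R^{1-n/q-\mu}$ tacitly use $\rho_k\le R\le R_1$, so $c_2$ may inherit a harmless dependence on $R_1$; the paper is equally informal here, and in the final application (Theorem~\ref{DG_Theorem}(ii)) the constant depends on $R_0$ anyway.
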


Now part (ii) of Theorem \ref{DG_Theorem} follows by the standard combination of estimates in  Lemmas \ref{Holder_continuity} and \ref{Holder_continuity_boundary}.

\newpage
\section{H\" older continuity of weak solutions} \label{Proof_T3}
\setcounter{equation}{0}

\bigskip
\medskip

In this section we give  the proof of Theorem \ref{Theorem_3}.  Assume $p>2$ and $u\in \overset{\circ}{W}{^1_p}(\Om)$ is a $p$-weak solution to the problem \eqref{Equation} corresponding to the right-hand side $f\in L_q(\Om)$ with $q>n$. From Proposition \ref{Theorem_2} we conclude $u\in L_\infty(\Om)$.

Let us fix some $r\in \left(\frac{2n}{n+2},2\right)$ and  $\theta\in \left(\frac nr -\frac n2,1\right)$. Then from \eqref{Assumptions_Morrey_space} and Proposition \ref{Holder_inequality} we obtain
$$b\in L^{r, n-r}(\Om), \qquad \| b\|_{L^{r, n-r}(\Om)} \, \le \,  c\, \| b\|_{L^{2, n-2}_w(\Om)}.$$
First we consider the case $B_R(x_0)\subset \Om$.
  Take  some radius $\rho<R$ and  a cut-off function $\zeta\in C_0^\infty(B_R(x_0))$ such that  \begin{equation}0\le \zeta\le 1 ,  \qquad \zeta\equiv 1 \quad \mbox{on} \quad B_\rho(x_0), \qquad |\nabla \zeta|\, \le \, \frac{c}{R-\rho}. \label{Cut-off} \end{equation}
   Assume $k\in \Bbb R$ is arbitrary and denote
   $$
   \tilde u \, := \, (u-k)_+ \, \equiv \, \max\{ u-k, 0\}, \qquad \tilde u \in L_\infty(\Om)\cap W^1_p(\Om).
   $$
Fix $m:=\frac 1{1-\theta}$ and note that $2m-1=m(1+\theta)$.
   Take $\eta = \zeta^{2m} \tilde u $  in \eqref{Identity}. It is easy to see that
  $$
  \mathcal B[u, \eta] \ = \  \mathcal B[\zeta^m\tilde u, \zeta^m \tilde u ] \ - \   m\, \int\limits_\Om \zeta^{2m-1} \,  b\cdot \nabla \zeta  \, |\tilde u |^2\, dx.
  $$
  Taking into account   \eqref{Quadratic_Form_Non-negative} and $2m-1=m(1+\theta)$ we obtain
  \begin{equation}\label{From_this}
  \gathered
  \|\zeta^m\nabla\tilde u\|_{L_2(B_R(x_0))}^2 \ \le  \ c\, \| \tilde u \nabla \zeta\|_{L_2(B_R(x_0))}^2  +   m\, \int\limits_\Om \zeta^{m(1+\theta)}\,  b\cdot \nabla \zeta  \, |\tilde u|^2\, dx \ +  \\ + \
  \| f\|_{L_q(\Om)}^2 \, |A_k\cap B_R(x_0) |^{1-\frac 2q}
  \endgathered
  \end{equation}
  where  $A_k:= \{\, x\in \Om: \, u(x)>k\, \}$.   Taking into account  \eqref{Cut-off} and applying the estimate \eqref{Morrey_Compactness_estimate}   we obtain
  $$
  \int\limits_\Om \zeta^{m(1+\theta)}\,  b\cdot \nabla \zeta  \, |\tilde u|^2\, dx  \ \le \  \frac{c\, R^\theta}{ R-\rho  }\, \|b\|_{L^{r, n-r}(\Om)} \, \left\| \nabla (\zeta^m \tilde u )\right\|_{L_2(B_R(x_0))}^{1+\theta }
  \| \tilde u\|_{L_2(B_R(x_0))}^{1-\theta }.
  $$
  Taking arbitrary $\ep>0$ and applying the Young inequality we obtain
  $$
  \gathered
  \int\limits_\Om \zeta^m b\cdot \nabla \zeta  \, |\tilde u |^2\, dx \ \le \ \ep \, \left\| \nabla \left(\zeta^m \tilde u\right)\right\|_{L_2(B_R(x_0))}^2 \ + \\ +  \ \frac{c_\ep}{(R-\rho)^2} \Big( \frac R{R-\rho}\Big)^{\frac {2\theta} {1-\theta}}\,   \|b\|_{L^{r, n-r}(B_R(x_0))}^{\frac { 2} {1-\theta}}     \|   \tilde u \|_{L_2(B_R(x_0))}^2.
  \endgathered
  $$
  So, if we fix sufficiently small $\ep>0$ from \eqref{From_this} for any $k\in \Bbb R$ and $0<\rho<R$ we obtain
  $$
  \gathered
 \tfrac 12\,  \| \nabla (u-k)_+\|_{L_2(B_\rho(x_0))}^2 \ \le \\ \le  \ \frac{c}{(R-\rho)^2}\, \left(1+ \Big( \frac R{R-\rho}\Big)^{\frac {2\theta} {1-\theta}}\,   \|b\|_{L^{r, n-r}(\Om)}^{\frac { 2} {1-\theta}} \right)\,  \| (u-k)_+  \|_{L_2(B_R(x_0))}^2    + \\ + \
  \| f\|_{L_q(\Om)}^2 \, |A_k\cap B_R(x_0) |^{1-\frac 2q}.
  \endgathered
  $$
  Hence we obtain that $u\in DG(\Om)$. Applying the same arguments to $-u$ instead of $u$ we also obtain $-u\in DG(\Om)$.

 Now we turn to the estimates near the boundary. Assume $x_0\in \cd \Om$ and denote $  \Om_R(x_0):=\Om \cap B_R(x_0)$.  Take a cut-off function $\zeta\in C_0^\infty(B_R(x_0))$ satisfying \eqref{Cut-off}. Then for any $k\ge 0$   the function  $\eta = \zeta^{2m} (u-k)_+$  vanishes on $\cd \Om$ and hence it is admissible for the identify \eqref{Identity}. Repeating all previous arguments we arrive at
  $$
  \gathered
 \tfrac 12\,  \| \nabla (u-k)_+\|_{L_2(\Om_\rho(x_0))}^2 \ \le \\ \le   \ \frac{c}{(R-\rho)^2}\, \left(1+ \Big( \frac R{R-\rho}\Big)^{\frac {2\theta} {1-\theta}}\,   \|b\|_{L^{r, n-r}(\Om)}^{\frac { 2} {1-\theta}}\right)\,  \| (u-k)_+  \|_{L_2(\Om_R(x_0))}^2    + \\ + \
  \| f\|_{L_q(\Om)}^2 \, |A_k\cap \Om_R (x_0)|^{1-\frac 2q},
  \endgathered
  $$
  which holds for any $k\ge 0$. Hence $u\in DG(\cd \Om)$.  Applying the same arguments to $-u$ instead of $u$ we also obtain $-u\in DG(\cd \Om)$. Now Theorem \ref{Theorem_3} follows directly from Theorem \ref{DG_Theorem}.

 \newpage
 \section{Appendix}\label{Appendix_2}

\setcounter{equation}{0}

\bigskip
In this section we prove the estimate \eqref{Drift_approximation1}.
\begin{proposition}
  \label{Smooh_approximation} Assume $\Om\subset \Bbb R^n$ is a bounded domain and let $b$ satisfies \eqref{Assumptions_Morrey_space}.
  Assume  $\Om'\Subset \Om$,  $0<\ep< \operatorname{dist}\{ \bar \Om', \cd \Om\}$
  and denote by $b_\ep\in C^\infty(\bar \Om)$ the convolution
  $$
 b_\ep(x)\, := \, \int\limits_{\Om} \om_\ep(x-y)b(y)\, dy, \qquad x\in \Om,
$$
where $\om_\ep(x):=\ep^{-n}\om(x/\ep)$ and $\om\in C_0^\infty(\Bbb R^n)$ is the standard Sobolev kernel satisfying properties \eqref{Sobolev_kernel}.
 Then
  $b_\ep$ satisfies the estimate
  \begin{equation}
\gathered
 \| b_\ep\|_{L^{2,n-2}_w(\Om')} \, \le \, c\, \|b\|_{L^{2,n-2}_w(\Om)}.
 \endgathered
 \label{Drift_approximation}
\end{equation}
with some constant $c>0$ depending only on $n$.

\end{proposition}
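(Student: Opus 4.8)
The plan is to bound, for each fixed $x_0\in\Om'$ and each $R>0$, the quantity $R^{-\frac{n-2}{2}}\,\|b_\ep\|_{L_{2,w}(B_R(x_0)\cap\Om')}$ by $c(n)\,\|b\|_{L^{2,n-2}_w(\Om)}$ and then to take the supremum over $x_0$ and $R$. The argument naturally splits into the two regimes $R\ge\ep$ and $R<\ep$, which require different mechanisms.

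In the regime $R\ge\ep$ I would argue as follows. Since $\ep<\dist\{\bar\Om',\cd\Om\}$, for every $x\in B_R(x_0)\cap\Om'$ the ball $B_\ep(x)$ is contained in $\Om\cap B_{R+\ep}(x_0)$; hence, if we denote by $b^{(1)}$ the extension by zero to $\Bbb R^n$ of $b\,\chi_{\Om\cap B_{R+\ep}(x_0)}$, then $b_\ep=\om_\ep*b^{(1)}$ throughout $B_R(x_0)\cap\Om'$. By Young's inequality the operator $g\mapsto\om_\ep*g$ is bounded on $L_{3/2}$ and on $L_3$ with norm at most $\|\om_\ep\|_{L_1}=1$, so by the Marcinkiewicz interpolation theorem (Proposition \ref{Marcinkiewicz}, applied with $\Om=\Om'$ any bounded ball containing the supports of $b^{(1)}$ and $\om_\ep*b^{(1)}$) it is bounded on $L_{2,w}$ with norm at most $1$. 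Consequently $\|b_\ep\|_{L_{2,w}(B_R(x_0)\cap\Om')}\le\|b^{(1)}\|_{L_{2,w}(\Bbb R^n)}=\|b\|_{L_{2,w}(\Om\cap B_{R+\ep}(x_0))}$, and since $R+\ep\le 2R$ the latter is at most $(2R)^{\frac{n-2}{2}}\,\|b\|_{L^{2,n-2}_w(\Om)}$, which gives the claim in this regime with constant $2^{\frac{n-2}{2}}$.

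In the regime $R<\ep$ the estimate above is too weak, since it controls $b_\ep$ on $B_R(x_0)$ only in terms of $\|b\|_{L_{2,w}}$ on the far larger ball $B_{2\ep}(x_0)$, losing a factor $(\ep/R)^{\frac{n-2}{2}}$. Instead I would use the $L^\infty$ smoothing of the mollifier at sub-$\ep$ scales: for $x\in B_R(x_0)\cap\Om'$ one has $B_\ep(x)\subset\Om$, hence $|b_\ep(x)|\le\|\om_\ep\|_{L_\infty}\int_{B_\ep(x)}|b|\,dy\le c(n)\,\ep^{-n}\int_{B_\ep(x)\cap\Om}|b|\,dy$. By Proposition \ref{Holder_inequality} (with $p=2$, $q=1$, using $n\ge 3$) one has $b\in L^{1,n-1}(\Om)$ with $\|b\|_{L^{1,n-1}(\Om)}\le c\,\|b\|_{L^{2,n-2}_w(\Om)}$, so $\int_{B_\ep(x)\cap\Om}|b|\,dy\le\ep^{n-1}\|b\|_{L^{1,n-1}(\Om)}\le c\,\ep^{n-1}\|b\|_{L^{2,n-2}_w(\Om)}$ and therefore $\|b_\ep\|_{L_\infty(B_R(x_0)\cap\Om')}\le c(n)\,\ep^{-1}\|b\|_{L^{2,n-2}_w(\Om)}$. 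Combining this with the elementary bound $\|g\|_{L_{2,w}(E)}\le\|g\|_{L_\infty(E)}\,|E|^{1/2}$ and $|B_R|\le c(n)R^n$ we get $R^{-\frac{n-2}{2}}\|b_\ep\|_{L_{2,w}(B_R(x_0)\cap\Om')}\le c(n)\,\tfrac{R}{\ep}\,\|b\|_{L^{2,n-2}_w(\Om)}\le c(n)\,\|b\|_{L^{2,n-2}_w(\Om)}$ because $R<\ep$. Taking the supremum over $x_0\in\Om'$ and $R>0$ in the two regimes yields \eqref{Drift_approximation}.

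The only genuinely delicate point is recognizing that the two scale regimes must be treated separately --- convolution boundedness on weak $L^2$ for $R\gtrsim\ep$ and the pointwise $L^1$--$L^\infty$ mollifier estimate combined with the $L^{1,n-1}$ Morrey bound for $R\lesssim\ep$ --- and checking that at small scales the gain of a full power of $R$ exactly compensates the $\ep^{-1}$ blow-up of $\|\om_\ep\|_{L_\infty}$. Everything else (the support identity $b_\ep=\om_\ep*b^{(1)}$ on $B_R(x_0)$, Young's inequality, and the invocations of Propositions \ref{Marcinkiewicz} and \ref{Holder_inequality}) is routine.
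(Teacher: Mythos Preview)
Your proof is correct and follows essentially the same approach as the paper's: the same split at $R=\ep$, the same $L^{1,n-1}$ pointwise bound (via Proposition~\ref{Holder_inequality}) in the regime $R<\ep$, and the same Marcinkiewicz interpolation between $L_{3/2}$ and $L_3$ (via Proposition~\ref{Marcinkiewicz}) in the regime $R\ge\ep$. The only cosmetic differences are that the paper passes through $\|b_\ep\|_{L_2}$ rather than $\|b_\ep\|_{L_\infty}|E|^{1/2}$ at small scales, and that it phrases the large-scale step as boundedness of $T_\ep:L_p(B_{2R}(x_0)\cap\Om)\to L_p(B_R(x_0)\cap\Om')$ rather than via the zero extension $b^{(1)}$.
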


\begin{proof}
Take arbitrary $ x_0\in \Om'$. Note that $B_\ep(x)\subset \Om $ for any $ x\in   \Om'$.
First we consider the case $R\in (0, \ep)$.
From the definition of the convolution we have a pointwise estimate
$$
|b_\ep(x)| \, \le \, \frac 1{\ep^n}\, \int\limits_{B_\ep(x)} |b(y)|\, dy, \qquad \forall\, x\in B_R(x_0)\cap  \Om'
$$
and hence from  the definition of the Morrey  norm $L^{1,n-1}(\Om)$ we obtain
$$
|b_\ep(x)| \, \le \, \frac 1{\ep}\, \| b \|_{L^{1,n-1}(\Om)} \,  \qquad \forall\, x\in B_R(x_0)\cap  \Om'.
$$
Taking into account  the estimate
$$
\| b  \|_{ L^{1,n-1}(\Om )}    \, \le \, c  \,  \|b \|_{L^{2,n-2}_w(\Om)}
$$
with a constant $c>0$ depending only on $\Om$  we arrive at
$$
|b _\ep(x)| \, \le \, \frac 1{\ep}\, \| b  \|_{L^{1,n-1}(\Om)} \, \le \,  \frac{c}{\ep}  \,  \|b \|_{L^{2,n-2}_w(\Om)}, \qquad \forall\, x\in B_R(x_0)\cap  \Om'.
$$
Hence we obtain
$$
\| b_\ep\|_{L_{2}(B_R(x_0)\cap  \Om')}^2 \, \le \, c\, \left(\frac R\ep\right)^2 \, \| b\|_{ L^{2,n-2}_w( \Om)}^2\, R^{n-2}.
$$
Applying the trivial estimate $$\| b_\ep\|_{L_{2,w}(B_R(x_0)\cap  \Om')}\le \| b_\ep\|_{L_{2}(B_R(x_0)\cap \Om)}$$
for $R\in (0, \ep)$ we arrive at
\begin{equation}
\label{Approx_estimate}
\| b_\ep\|_{L_{2,w}(B_R(x_0)\cap  \Om')}^2 \, \le \, c\,  \| b\|_{ L^{2,n-2}_w( \Om)}^2\, R^{n-2}.
\end{equation}
Now consider the case $R\in ( \ep , \operatorname{diam}\Om)$.  For  any $f\in L_1(B_{2R}(x_0)\cap \Om)$ we define a convolution operator
$$
(T_\ep f)(x) \, := \, \int\limits_{B_\ep(x)} \om_\ep(x-y) f(y)\, dy, \qquad x\in B_R(x_0)\cap \Om'.
$$
From the H\" older inequality for $p\in [1, +\infty)$ we obtain
$$
|(T_\ep f)(x)|^p \, \le \, \int\limits_{B_\ep(x)} \om_\ep(x-y) |f(y)|^p\, dy, \qquad x\in B_R(x_0)\cap  \Om'.
$$
As for $\ep< \min\left(R, \operatorname{dist}\{ \bar \Om', \cd \Om\}\right)$   we have the inclusion $$\forall\,  x\in B_R(x_0)\cap \Om'  \qquad B_\ep(x)\subset B_{2R}(x_0)\cap  \Om,$$ we conclude
$$
\int\limits_{B_R(x_0)\cap \Om'}  |(T_\ep f)(x)|^p \, dx \, \le \, \int\limits_{B_{2R}(x_0)\cap  \Om} |f(y)|^p\, dy, \qquad x\in B_R(x_0)\cap \Om'.
$$
The last inequality implies that    for $\ep< \min\left(R, \operatorname{dist}\{ \bar \Om', \cd \Om\}\right)$  the linear operator $T_\ep$ is  bounded   from $L_p(B_{2R}(x_0)\cap \Om)$ to $L_p(B_R(x_0)\cap \Om')$ for any $p\in [1, +\infty)$ and the following estimate holds:
$$
M_p\, := \, \| T_\ep\|_{L_p(B_{2R}(x_0)\cap \Om)\to L_p(B_R(x_0)\cap \Om')} \, \le  \, 1.
$$
Using Proposition \ref{Marcinkiewicz} we conclude that the following interpolation estimate holds:
$$
\| T_\ep\|_{L_{2,w}(B_{2R}(x_0)\cap \Om)\to L_{2,w}(B_R(x_0)\cap \Om')} \, \le \,  c\, M_3^{1/2} \, M_{3/2}^{1/2}  \, \le \, c
$$
with some constant $c>0$ depending only on $n$. Hence we obtain
$$
\| b_\ep\|_{L_{2,w}(B_R(x_0)\cap \Om')} \, \le \, c\, \| b \|_{L_{2,w}(B_{2R}(x_0)\cap \Om)}.
$$
From the definition of the Morrey quasinorm $L^{2,n-2}_w(\Om)$ we obtain
$$
\| b_\ep\|_{L_{2,w}(B_R(x_0)\cap \Om')}^2 \, \le \, c\, \| b \|_{L_w^{2,n-2}(\Om)}^2 \, (2R)^{n-2}.
$$
Hence  for any $R\in ( \ep , \operatorname{diam}\Om)$ we arrive at the estimate \eqref{Approx_estimate}. So, the estimate \eqref{Approx_estimate} is valid for all $R\in (0,\operatorname{diam}\Om)$. Hence  for any $\Om'\Subset \Om$ and any $\ep\in (0, \operatorname{dist}\{ \bar \Om', \cd \Om\})$ we obtain \eqref{Drift_approximation}.
\end{proof}

\newpage

\end{document}